\documentclass[11pt]{article}

\usepackage[a4paper,margin=29mm]{geometry}
\usepackage{amsmath,amssymb,amsthm,mathtools}
\usepackage{booktabs}
\usepackage{microtype}
\usepackage{needspace}
\usepackage[hidelinks]{hyperref}

\newtheorem{theorem}{Theorem}[section]
\newtheorem{proposition}[theorem]{Proposition}
\newtheorem{lemma}[theorem]{Lemma}
\newtheorem{corollary}[theorem]{Corollary}
\newtheorem{remark}[theorem]{Remark}
\theoremstyle{definition}

\newcommand{\Lam}{\Lambda}
\newcommand{\Lstar}{\Lambda_*}

\newcommand{\dd}{\,\mathrm d}

\title{Finite Hardy Phase Theory for Power Means\\
\large An exact analytic shooting family through the Carleman parameter}
\author{Tianchi Huang\\[2pt]\small Independent researcher}
\date{28 September 2026}

\hypersetup{
  pdftitle={Finite Hardy Phase Theory for Power Means: An exact analytic shooting family through the Carleman parameter},
  pdfauthor={Tianchi Huang},
  pdfsubject={Finite Hardy inequalities and asymptotic phase theory},
  pdfkeywords={Hardy inequality, power means, finite sections, Carleman inequality, asymptotic phase, Poincar\'e expansion}
}

\begin{document}

\maketitle

\begin{abstract}
For the power mean $P_t$, $t<1$, let
\[
 \Lam_N(t)=\sup_{x_k>0}
 \frac{\sum_{n=1}^N P_t(x_1,\ldots,x_n)}{\sum_{n=1}^N x_n}
\]
be its finite Hardy constant.  Negative powers, the geometric mean, and the
usual positive-exponent Hardy inequality correspond respectively to $t<0$,
$t=0$, and $0<t<1$.  With the analytic parameter
$q=t/(1-t)\in(-1,\infty)$, and writing
$\lambda_N(q)=\Lam_N(q/(1+q))$, we derive one exact scalar shooting map for all
three regimes; its apparent singularity at $q=0$ is removable and gives the
finite Carleman problem exactly.  The map has a common terminal condition and
its limiting vector field has a quadratic critical bottleneck at
\[
 y_*(q)=1+q,\qquad
 \Lstar(q)=(1+q)^{(1+q)/q}.
\]
We prove that there is a unique real-analytic phase $\kappa(q)$ for which the
finite-defect expansion has zero cubic coefficient and, uniformly for $q$ in
compact subsets of $(-1,\infty)$, for every fixed $L\ge2$,
\[
 \Lstar(q)-\lambda_N(q)
 =\sum_{j=2}^{L}\frac{A_j(q)}{(\log N+\kappa(q))^j}
 +O\!\left((\log N)^{-L-1}\right).
\]
In this phase-normalized scale,
\[
 A_2=2\pi^2(1+q)\Lstar,\qquad A_3=0,
 \qquad
 A_4=-\frac{\pi^2}{3}(2q^2+5q+5)A_2.
\]
Thus the classical Carleman and positive-Hardy leading corrections are
projections of a single analytic family.  At the Carleman parameter we obtain
$A_2(0)=2e\pi^2$ and $A_4(0)=-(10/3)e\pi^4$; the latter is also recovered by a
direct local residue calculation.  The phase itself contains the global
discrete defect.  After phase normalization, the first coefficient involving
a nonconstant discrete response is $A_5$, at order
$(\log N+\kappa(q))^{-5}$.
\end{abstract}

\medskip
\noindent\textbf{2020 Mathematics Subject Classification.}
26D15 (primary); 39A12, 41A60 (secondary).

\smallskip
\noindent\textbf{Keywords.}
Hardy inequality; power means; finite sections; Carleman inequality;
asymptotic phase; Poincar\'e expansion.

\section{Introduction and main results}

For $t<1$, define
\begin{equation}\label{eq:power-mean}
 P_t(x_1,\ldots,x_n)=
 \begin{cases}
 \displaystyle\left(\frac1n\sum_{k=1}^n x_k^t\right)^{1/t},&t\ne0,\\[1.2ex]
 \displaystyle(x_1\cdots x_n)^{1/n},&t=0.
 \end{cases}
\end{equation}
Following the finite Hardy-sequence terminology of P\'ales and Pasteczka
\cite{PalesPasteczka2016}, set
\begin{equation}\label{eq:finite-constant}
 \Lam_N(t)=H_N(P_t)
 =\sup_{x_k>0}
 \frac{\sum_{n=1}^N P_t(x_1,\ldots,x_n)}{\sum_{n=1}^N x_n}.
\end{equation}
The infinite Hardy constant is classical:
\begin{equation}\label{eq:critical-constant-t}
 \Lam_\infty(t)=(1-t)^{-1/t},
\end{equation}
with the continuous value $e$ at $t=0$.
For $t=1/p\in(0,1)$ this is Hardy's classical constant after the
substitution $x_k=a_k^p$; see, for example,
\cite[Theorem~326]{HardyLittlewoodPolya1952}.  The limit $t=0$ is the
Carleman case.

The finite object in \eqref{eq:finite-constant} is classical.  De Bruijn
obtained the sharp leading correction at the geometric-mean point
\cite{deBruijn1963}.  Gao's historical discussion attributes an earlier
positive-branch asymptotic to Ackermans \cite{Ackermans1964,GaoHardy}; the
quantitative comparison in Section~\ref{sec:projections} uses Gao's own
theorem, not that secondary transcription.  Wilf's monograph \cite{Wilf1970}
gives a standard account of finite sections of classical inequalities, and
later weighted variants were developed in \cite{GaoCarleman,GaoHardy}.  On the
negative-power side, finite weighted discrete refinements appear in
\cite{Cizmesija2005}, whereas \cite{Nikolidakis2014} concerns related integral
Hardy inequalities and Muckenhoupt weights.  The matched-weight Hardy
functional for general weighted means was developed by P\'ales and Pasteczka
\cite{PalesPasteczkaWeighted}; it is distinct from the weighted-mean-matrix
finite sections considered by Gao.  Exact nonlinear finite systems for the
Carleman constant also have precedent \cite{WuZhangWang2008}.  Finite
Hilbert-space sections have been studied by spectral methods
\cite{DimitrovGadjevNikolovUluchev2021,GadjevGochev2023,Stampach2022}; at
$p=2$, detailed higher-order expansions appear in
\cite{DimitrovGadjevIsmail2024,Stampach2022}.  Their published fifth-order
coefficients are not consistent with one another; Section~\ref{sec:projections}
resolves the discrepancy by retaining the cubic term in the exact Gamma
phase.  Our purpose is therefore not to reintroduce
finite Hardy constants, shooting, or single-parameter asymptotics.  The
structural contribution is that the full power-mean range forms one exact
analytic discrete family, including the Carleman parameter, with a
parameter-uniform phase construction and all-order Poincar\'e asymptotics.

The natural coordinate is
\begin{equation}\label{eq:q-parameter}
 q=\frac{t}{1-t}\in(-1,\infty),\qquad
 t=\frac{q}{1+q}.
\end{equation}
To keep the two parameterizations distinct, write
\begin{equation}\label{eq:lambda-q}
 \lambda_N(q):=\Lam_N\!\left(\frac{q}{1+q}\right)
\end{equation}
and
\begin{equation}\label{eq:critical-q}
 y_*(q)=1+q,\qquad
 \Lstar(q)=(1+q)^{(1+q)/q},
\end{equation}
where $\Lstar(0)=e$.  Thus
$\Lstar(q)=\Lam_\infty(q/(1+q))$; throughout the paper $\Lstar$ denotes
the critical constant in the $q$ parameter, while $\Lam_\infty$ uses $t$.

Our first result identifies the finite variational problem with a unique
admissible scalar shot.

\Needspace{0.60\textheight}
\begin{theorem}[Finite extremizer and exact shooting]\label{thm:shooting}
For every $N\ge1$ and $q\in(-1,\infty)$, the maximum in
\eqref{eq:finite-constant}, after normalization $\sum x_k=1$, is attained at a
unique point of the open simplex.  Removing the normalization gives one
positive extremal ray.

Let $h_n=1/n$ and, for $q\ne0$, define
\begin{equation}\label{eq:exact-map}
 T_{n,q,\Lam}(y)
 =(y-h_n)
 \left[
 \frac{1+h_n}
 {1+h_n((y-h_n)/\Lam)^q}
 \right]^{1/q}.
\end{equation}
At $q=0$ this has the removable continuation
\begin{equation}\label{eq:exact-map-zero}
 T_{n,0,\Lam}(y)
 =(y-1/n)^{n/(n+1)}\Lam^{1/(n+1)}.
\end{equation}
For real $q>-1$ and $\Lam>0$, set
\[
 \operatorname{Dom}(T_{n,q,\Lam})=(1/n,\infty).
\]
An orbit is \emph{admissible through $N$} if
$y_n\in\operatorname{Dom}(T_{n,q,\Lam})$ for $1\le n<N$; the endpoint
$y_N=1/N$ is allowed as the terminal boundary value.
Then $\lambda_N(q)$ is the unique value $\Lam$ for which the orbit
\begin{equation}\label{eq:shooting-orbit}
 y_1=\Lam,\qquad y_{n+1}=T_{n,q,\Lam}(y_n)
\end{equation}
is admissible through $N$ and satisfies
\begin{equation}\label{eq:terminal}
 Ny_N=1.
\end{equation}
The map $(q,\Lam,y,h)\mapsto T_{h,q,\Lam}(y)$ is locally jointly real analytic
across $q=0$ whenever $\Lam>0$, $h>0$, and $y>h$ (and its complexification is
holomorphic on the corresponding local charts).
\end{theorem}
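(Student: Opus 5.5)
The plan is to derive the shooting map from the Lagrange conditions. Uniqueness of the shot will come from monotonicity of the orbit in $\Lam$, and analyticity from a removable-singularity argument.

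\emph{Setup and Lagrange system.} Write $t=q/(1+q)$ and $F(x)=\sum_{n\le N}P_n$ with $P_n=P_t(x_1,\dots,x_n)$. For $t\neq0$, $\partial P_n/\partial x_k=P_n^{1-t}x_k^{t-1}/n$ for $k\le n$. The same formula holds at $t=0$ by direct differentiation of the geometric mean.

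First I show that the maximum over the closed simplex is attained in its interior.
\begin{itemize}
\item For $t\le0$, any $P_n$ with some $x_k=0$, $k\le n$, vanishes. So $F$ is strictly smaller near the boundary than at suitable interior points.
\item For $0<t<1$, $\partial F/\partial x_k\to+\infty$ as $x_k\to0^+$ while the other coordinates stay bounded below. Shifting a little mass into a vanishing coordinate therefore increases $F$.
\end{itemize}
In either case the supremum is attained at an interior critical point. There the Lagrange condition reads $x_k^{t-1}R_k=\mu$, where $R_k=\sum_{n=k}^N P_n^{1-t}/n$.

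By Euler's identity for the $1$-homogeneous $F$, $\sum x_k\partial_kF=F$. Hence $\mu=F(x)/\sum x_k$, so the multiplier equals the maximal ratio $\lambda_N(q)$.

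\emph{Reduction to the scalar map.} Set $y_n=R_n/P_n^{1-t}$.
\begin{itemize}
\item Since $P_1=x_1$ and $R_1=\mu x_1^{1-t}$, we get $y_1=\mu=\Lam$.
\item Since $R_N=P_N^{1-t}/N$, we get $Ny_N=1$.
\item From $R_{n+1}=R_n-P_n^{1-t}h_n$ we get $y_{n+1}=(y_n-h_n)(P_n/P_{n+1})^{1-t}$. Moreover $R_{n+1}>0$ forces $y_n>h_n$, which is admissibility.
\end{itemize}
The Lagrange condition gives $x_{n+1}^{1-t}=R_{n+1}/\Lam=(y_n-h_n)P_n^{1-t}/\Lam$. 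Hence $(x_{n+1}/P_n)^t=u^q$ with $u=(y_n-h_n)/\Lam$. Then $(P_{n+1}/P_n)^t=(1+h_nu^q)/(1+h_n)$, and raising to the power $(1-t)/t=1/q$ yields exactly \eqref{eq:exact-map}.

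Conversely, an admissible orbit satisfying \eqref{eq:terminal} reconstructs, up to scaling, a positive critical point. The recipe is $x_1=1$ and $x_{n+1}=P_n u_n^{1/(1-t)}$, which is the same as $P_n u_n^{1+q}$. So critical rays correspond bijectively to admissible terminal shots.

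\emph{Uniqueness via monotonicity.} Rewrite
\[
T_{n,q,\Lam}(y)=\Lam(1+h)^{1/q}\bigl(u^{-q}+h\bigr)^{-1/q},\qquad u=(y-h)/\Lam .
\]
For every $q>-1$ this is strictly increasing in $u$. Writing it instead as $(y-h)(1+h)^{1/q}(1+hu^q)^{-1/q}$ shows it is strictly increasing in $\Lam$ at fixed $y$, for both signs of $q$.

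By induction, each $y_n$ is a strictly increasing function of $\Lam$ on the set of $\Lam$ admissible up to step $n$. That set is an interval unbounded above: larger $\Lam$ keeps all $y_n$ larger, hence still above $h_n$. So the terminal equation $Ny_N=1$ has at most one solution.

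The maximizer supplies one solution, namely $\Lam=\lambda_N(q)$. Uniqueness of the shot then gives uniqueness of the critical ray, hence of the maximizer on the simplex.

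\emph{Analyticity and the $q=0$ value.} Take logarithms with $\ell=\log u$:
\[
\log T=\log(y-h)+\frac{\log(1+h)-\log\bigl(1+he^{q\ell}\bigr)}{q}.
\]
The numerator is jointly analytic in $(q,\ell,h)$ near real points with $h>0$ and vanishes at $q=0$, so the quotient is analytic across $q=0$. This also holds after complexification on small charts where $1+he^{q\ell}\neq0$.

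Its value at $q=0$ is $-h\ell/(1+h)=-\ell/(n+1)$. This gives $T=(y-h)\,u^{-1/(n+1)}=(y-1/n)^{n/(n+1)}\Lam^{1/(n+1)}$, which is \eqref{eq:exact-map-zero}. The derivation above also goes through directly at $t=0$, so the Carleman case is covered by the same argument.

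\emph{Main obstacle.} The step I expect to be most delicate is the boundary exclusion for $0<t<1$, which needs a clean perturbation estimate near the faces of the simplex. A second delicate point is confirming that the admissible $\Lam$-set is an interval, so that monotonicity really yields uniqueness.
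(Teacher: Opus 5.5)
\textbf{Gap: boundary exclusion for $t<0$.} Your treatment of $t\le0$ (that is, $-1<q\le0$) asserts interior attainment without proving it. For $t<0$ this skips the one genuinely quantitative step in the existence part.

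The vanishing of the prefix means that contain a zero coordinate does not make $F$ small at the boundary. At a boundary point whose first zero is at index $j\ge2$, one has $F=\sum_{n<j}P_n>0$; for example $F(1,0,\dots,0)=1$. To rule out such a point you must perturb $x_j$ to $\varepsilon$.
\begin{itemize}
\item At $t=0$ the gain is of order $\varepsilon^{1/j}$, which beats the $O(\varepsilon)$ renormalization loss, so this case is routine.
\item At $t<0$, however, $P_t(x_1,\dots,x_{j-1},\varepsilon)=j^{-1/t}\varepsilon+o(\varepsilon)$ is only linear. Meanwhile, removing mass from the first $j-1$ coordinates can cost up to $\Lambda_{j-1}(t)\varepsilon$. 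Nothing you wrote excludes $\Lambda_{j-1}(t)\ge j^{-1/t}$, in which case the boundary point could be a maximizer.
\end{itemize}
The paper closes this with the pointwise bound $P_t(x_1,\dots,x_n)\le n^{-1/t}\min_k x_k$. Applied to the last entry of each prefix, it gives $\Lambda_{j-1}(t)\le (j-1)^{-1/t}<j^{-1/t}$. An induction on the length then shows that every boundary value is at most $\Lambda_{j-1}(t)<\Lambda_N(t)$. The same bound also gives continuity of the zero extension at the boundary, which your compactness step tacitly needs. Your stated ``main obstacle'' is therefore misplaced: $0<t<1$ is the easy case (gain of order $\varepsilon^t$), and $t<0$ is the case that needs an estimate.

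\textbf{The rest is sound, by a different route.} You get uniqueness of the shooting parameter from monotonicity of $T_{n,q,\Lambda}$ in both $y$ and $\Lambda$: $y_N$ is strictly increasing on an up-set of admissible $\Lambda$. Uniqueness of the maximizer then follows because a shot determines the ray. In fact, once maximizers are known to be interior, two of them share the multiplier $\lambda_N(q)$, hence the same orbit, hence the same ray.

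The paper instead uses concavity of $P_t$. The tangent-plane inequality shows that every positive Euler--Lagrange solution, in particular every one reconstructed from a terminal shot, has multiplier $\lambda_N(q)$. The Cauchy--Schwarz equality case in the Hessian then forces maximizers to be proportional.

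Your route avoids the Hessian computation and does not need the converse reconstruction. In exchange, it obtains the existence of a terminal shot only from interior attainment of the maximum. So the boundary step above is load-bearing: without it you have neither a shot at $\Lambda=\lambda_N(q)$ nor the open-simplex assertion of the theorem.
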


The principal theorem is the following compact-parameter phase expansion.

\begin{theorem}[Uniform finite Hardy phase theorem]\label{thm:main}
There exist real-analytic functions $\kappa$ and $A_j$, $j\ge2$, on
$(-1,\infty)$ such that, for every compact set $Q\Subset(-1,\infty)$ and
every fixed $L\ge2$,
\begin{equation}\label{eq:main-expansion}
 \Lstar(q)-\lambda_N(q)
 =\sum_{j=2}^{L}
 \frac{A_j(q)}{(\log N+\kappa(q))^j}
 +O_{Q,L}((\log N)^{-L-1})
\end{equation}
uniformly for $q\in Q$.  The normalization $A_3\equiv0$ uniquely determines
the phase $\kappa$ and then all the coefficient functions $A_j$ among
families satisfying these expansions on the whole interval.  The first
coefficients are
\begin{align}
 A_2(q)&=\mathcal K(q)=2\pi^2(1+q)\Lstar(q),\label{eq:Kq}\\
 A_3(q)&=0,\label{eq:A3}\\
 A_4(q)&=-\frac{\pi^2}{3}(2q^2+5q+5)\mathcal K(q).
 \label{eq:A4q}
\end{align}
No uniformity is asserted as $q\downarrow-1$ or $q\to\infty$.
\end{theorem}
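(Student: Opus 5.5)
The plan is to treat Theorem~\ref{thm:shooting} as the starting point and analyze the shooting orbit \eqref{eq:shooting-orbit} as a slowly varying discrete dynamical system near the critical bottleneck. Set $h=1/n$ and expand $T_{n,q,\Lam}(y)-y$ in powers of $h$:
\[
 T_{n,q,\Lam}(y)-y=h\,F(y;q,\Lam)+h^2G(y;q,\Lam)+\cdots .
\]
This expansion is uniform and analytic in $q$ across $q=0$ by the last clause of Theorem~\ref{thm:shooting}. A direct computation of $F$ should show that, at $\Lam=\Lstar(q)$, the function $F$ has a double zero at $y_*(q)=1+q$, with $F\approx -c(q)(y-y_*)^2$ and $c(q)>0$ analytic. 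For $\Lam=\Lstar-\varepsilon$ the zero disappears, and locally
\[
 F\approx -c\bigl[(y-y_*)^2+\beta(q)\varepsilon\bigr]+\cdots .
\]
In the logarithmic time $s=\log n$, the orbit is therefore a discretization of a saddle-node passage. The time spent near $y_*$ is $\pi/(c\sqrt{\beta\varepsilon})$ to leading order. Matching this passage time to $\log N$ minus the $O(1)$ entry and exit times (initial data $y_1=\Lam$, terminal value $Ny_N=1$) gives
\[
 \varepsilon\sim\frac{\pi^2}{c^2\beta\,(\log N+\kappa)^2}.
\]
Checking that $\pi^2/(c^2\beta)=2\pi^2(1+q)\Lstar$ will identify $A_2$ in \eqref{eq:Kq}.

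To reach all orders, I would carry out three steps in order. \emph{First}, near $y_*$, construct a formal normal-form (Fatou/Abel-type) coordinate $u$ in which the discrete step becomes exactly $u\mapsto u+h\cdot(\text{analytic in }\varepsilon)$ up to $O(h^{M})$ errors. The discrete corrections $h^2G,\dots$ appear as $O(1/n)$ perturbations in $s$-time; their cumulative effect on the passage is summable and produces only a convergent constant, which is absorbed into the phase. \emph{Second}, solve the transit equation
\[
 \log N+\kappa_0(q)=\Theta(\varepsilon;q),
 \qquad
 \Theta(\varepsilon;q)=\frac{\pi}{\sqrt{K^{-1}\varepsilon}}\bigl(1+a_1\varepsilon+a_2\varepsilon^2+\cdots\bigr)+b_0+b_1\varepsilon+\cdots ,
\]
where the regular part comes from the outer (global) solutions. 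Inverting this in powers of $X=1/(\log N+\kappa)$ gives a Poincar\'e series $\varepsilon=\sum A_jX^j$. Shifting $\kappa$ changes $A_3$ by a multiple of $A_2$, so $A_3=0$ fixes $\kappa$ uniquely. The same shift argument, applied to two families satisfying the expansion on all of $(-1,\infty)$, yields the uniqueness statement. \emph{Third}, prove that the errors are uniform on compact $Q$. I would do this with a monotone comparison argument: by Theorem~\ref{thm:shooting} the terminal value depends monotonically on $\Lam$, so sandwiching $\Lam$ between truncated ansatz values $\Lstar-\sum_{j\le L}A_jX^j\pm CX^{L+1}$ and showing that the resulting orbits overshoot and undershoot the terminal condition respectively traps $\lambda_N(q)$.

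The coefficient $A_4$ in \eqref{eq:A4q} comes from the first nonlinear correction $a_1$ to the passage integral. This is a local residue computation, done by expanding $F$ to fourth order in $(y-y_*)$ and to first order in $\varepsilon$ and evaluating the contour integral
\[
 \oint\frac{dy}{F(y;\Lstar-\varepsilon)}.
\]
Phase normalization then removes the global (non-local) contributions from $A_4$, which is why it is purely local. The check $A_4(0)=-(10/3)e\pi^4$ serves as a sanity test.

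I expect the main obstacle to be the rigorous all-order control of the discrete passage uniformly in $q$. The exit and entry regions join the bottleneck over logarithmically long times, so errors of size $O(1/n)$ must be shown to contribute only to $\kappa$ and to higher $A_j$, and never at the level $(\log N)^{-L}$ beyond what is claimed. I would first attempt this with an analytic conjugacy of the nonautonomous map to its time-$h$ flow, with remainder $O(h^{M}+\varepsilon^{M})$, carried out on complex charts so that Cauchy estimates give uniformity in $q\in Q$.
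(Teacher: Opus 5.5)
Your skeleton matches the paper's: saddle-node passage in $\log n$ time, a passage integral $c/\xi+\cdots$, asymptotic reversion, $A_3=0$ fixing $\kappa$ (a phase shift $s$ changes the cubic coefficient by $2sA_2$), and a residue computation for $A_4$. The all-orders step, however, has a genuine gap.

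\textbf{The discrete contribution is not a constant.} You assert that the discrete corrections accumulate to a convergent \emph{constant} absorbed into the phase. They do not. The cumulative defect $\sum_n d_{n,\delta}(v_n(\delta))$, with $d_{n,\delta}(v)=\log(1+1/n)-\int_v^{S_{n,\delta}(v)}\dd s/g_\delta(s)$, depends on $\delta$ (your $\varepsilon$). Only its value $\tau_0$ at $\delta=0$ goes into $\kappa=-(J+\tau_0)$. Its first Taylor coefficient $\tau_1$ enters $A_5=2(\mu_2+\tau_1)c^4$; the paper's $p=2$ benchmark gives $\tau_1(1)=\tfrac{13}{48}\zeta(3)-\tfrac13\neq0$. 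So, as stated, your expansion fails for $L\ge5$.

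Even if you let $b_1,b_2,\dots$ absorb discrete contributions, you must \emph{prove} that the discrete part expands in integer powers of $\delta$. A $\sqrt\delta$ term would sit at the order of $\mu_1\xi$ and change $A_4$, so your claim that $A_4$ is purely local rests on this. Terms $\delta^k\log\delta$ would destroy the Poincar\'e form. None of this is automatic: the orbit spends log-time $\asymp\delta^{-1/2}$ in the bottleneck, where it depends singularly on $\delta$, and $\partial_\delta^jv_n$ grows along the incoming orbit.

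The paper's mechanism has three parts:
\begin{enumerate}
 \item the exact telescoping identity $\log m-F_\delta(v_m)=\sum_{n<m}d_{n,\delta}(v_n)$;
 \item on the wedge $\delta\ell_m^2\le\varepsilon_0$, the propagator contraction $\partial_vS_{n,\delta}(v_n)\le1-c_1/(n\ell_n)$ and a Fa\`a di Bruno induction give $\partial_\delta^jv_n=O(\ell_n^{2j-1})$; with the scaled bound $|\partial_v^a\partial_\delta^bd_{n,\delta}|\le Cn^{-2}\rho^{-1-a/2-b}$ this makes the defect jets $O(\ell_n^{2j+2}/n^2)$ summable;
 \item truncation at the polynomial index $m_L=\lfloor\delta_N^{-(L+2)}\rfloor$, beyond which step-condition persistence and the terminal transfer cost only $O\bigl(1/(m_L\delta_N)+\ell_{m_L}/m_L+N^{-1}\bigr)=O(\delta_N^{L+1})$.
\end{enumerate}

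\textbf{Your proposed substitutes do not supply this.} A conjugacy with remainder $O(h^M+\varepsilon^M)$ is not available uniformly up to the double root. The defect is already of size $h^2/\rho$ with $\rho=\delta+v^2$, so the natural small parameter there is $h/\rho$. You would also still need the conjugating corrections at the moving endpoints to be regular in $\delta$.

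The monotone sandwich only repackages the problem. Fixing the sign of the terminal mismatch at $\Lam=\Lstar-\sum_jA_j(\log N+\kappa)^{-j}\mp C(\log N)^{-L-1}$ requires the log-time of the shot to accuracy $O((\log N)^{2-L})$, which is the same phase equation.

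Real analyticity of $\kappa$ needs the complexified critical orbit to stay in the bottleneck chart for all $n$. The paper gets this from $\partial_q^rg_{q,0}(v)=O(v^2)$ in the moving frame; your remark about complex charts does not address it.

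\textbf{The $A_4$ recipe is miscounted.} With $v\sim\xi$ and $\delta=\xi^2$, $\mu_1$ depends on every monomial $\delta^iv^k$ with $2i+k\le4$. This includes the pure $\delta^2$ term, whose coefficient $\tfrac12\partial_\Lam^2B_{q,\Lam}(y_*)$ at $\Lam=\Lstar$ equals $(1+q)/(2\Lstar^2)\neq0$. Expanding only to first order in $\varepsilon$ raises $\mu_1c$ by $\pi^2(1+q)^2/2$. It then gives $A_4(0)=-\tfrac43e\pi^4$ instead of $-\tfrac{10}{3}e\pi^4$, so your sanity check would fail. The paper avoids this by parametrizing the zeros exactly via $\Lam=\Phi_q(z)$, which makes the residue exactly $(y_*+w)/w$.
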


Here and throughout, ``complete expansion'' means complete in the fixed-order
Poincar\'e sense stated in \eqref{eq:main-expansion}: for each fixed $L$ the
remainder has the displayed order as $N\to\infty$.  It makes no assertion that
the infinite series obtained by letting $L\to\infty$ converges.

The three familiar regimes are now substitutions in one theorem.

\begin{corollary}[Three projections]\label{cor:projections}
In the $t$ parameter, with continuous values at $t=0$, the compositions of
the $q$-coefficient functions are
\begin{align}
 A_2\!\left(\frac{t}{1-t}\right)
 &=2\pi^2(1-t)^{-1/t-1},\label{eq:A2t}\\
 A_4\!\left(\frac{t}{1-t}\right)
 &=-\frac{2\pi^4}{3}
 (2t^2-5t+5)(1-t)^{-1/t-3}.\label{eq:A4t}
\end{align}
In particular:
\begin{enumerate}
 \item $t=1/p\in(0,1)$ gives the positive-exponent finite Hardy constants;
 \item $t=0$ gives
 \begin{equation}\label{eq:carleman-expansion}
 e-\Lam_N(0)
 =\frac{2e\pi^2}{(\log N+\kappa(0))^2}
 -\frac{10e\pi^4/3}{(\log N+\kappa(0))^4}
 +O((\log N)^{-5});
 \end{equation}
 the fourth-order coefficient here is also obtained directly, without taking
 a parameter limit, in Lemma~\ref{lem:carleman-residue};
 \item $t=-1/r<0$ gives the negative-exponent branch.
\end{enumerate}
At $p=2$ (hence $q=1$) one recovers $A_2(1)=16\pi^2$ and
$A_4(1)=-64\pi^4$, in agreement with the Hilbert-space expansion
\cite{Stampach2022}; the fifth-order comparison is discussed in
Section~\ref{sec:projections}.
\end{corollary}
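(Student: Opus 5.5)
The corollary is a direct specialization of Theorem~\ref{thm:main}; no new analysis is needed beyond algebra in the change of variables \eqref{eq:q-parameter}. I would first record the elementary identities under $q=t/(1-t)$:
\[
 1+q=\frac{1}{1-t},\qquad \frac{1+q}{q}=\frac1t,
 \qquad \Lstar\!\left(\frac{t}{1-t}\right)=(1-t)^{-1/t}=\Lam_\infty(t).
\]
At $t=0$ the right-hand side is understood as its continuous value $e$. This continuity is legitimate because $\Lstar$ is real analytic in $q$ across $q=0$, and $t\mapsto t/(1-t)$ is an analytic bijection $(-\infty,1)\to(-1,\infty)$.

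\textbf{The coefficient $A_2$.} Substituting into \eqref{eq:Kq} gives
\[
 A_2 = 2\pi^2(1-t)^{-1}(1-t)^{-1/t},
\]
which is \eqref{eq:A2t}.

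\textbf{The coefficient $A_4$.} Clearing denominators in the quadratic factor gives
\[
 (1-t)^2(2q^2+5q+5)=2t^2+5t(1-t)+5(1-t)^2=2t^2-5t+5,
\]
so $2q^2+5q+5=(2t^2-5t+5)(1-t)^{-2}$. Combining this with \eqref{eq:A4q} and the formula for $A_2$ yields
\[
 A_4=-\frac{\pi^2}{3}(2t^2-5t+5)(1-t)^{-2}\cdot 2\pi^2(1-t)^{-1/t-1},
\]
which is \eqref{eq:A4t}. Both compositions are analytic in $t$, so their values at $t=0$ are the limits.

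\textbf{The three regimes.} These are just the images of the three $t$-ranges under the bijection.
\begin{itemize}
 \item[(i)] $t=1/p\in(0,1)$ corresponds to $q=1/(p-1)\in(0,\infty)$.
 \item[(ii)] $t=0$ corresponds to $q=0$.
 \item[(iii)] $t=-1/r$ corresponds to $q=-1/(r+1)\in(-1,0)$.
\end{itemize}
In each case Theorem~\ref{thm:main} applies to the singleton compact set $Q=\{q\}$.

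\textbf{The Carleman expansion.} For item (ii) I take $L=4$ in \eqref{eq:main-expansion}. Since $A_3\equiv0$, the remainder is $O((\log N)^{-5})$. Evaluating the formulas at $t=0$ gives
\[
 A_2(0)=2\pi^2e,\qquad A_4(0)=-\tfrac{2\pi^4}{3}\cdot5\cdot e=-\tfrac{10}{3}e\pi^4,
\]
which is \eqref{eq:carleman-expansion}. The cross-check against Lemma~\ref{lem:carleman-residue} is cited rather than proved here.

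\textbf{The case $p=2$.} Here $q=1$ and $\Lstar(1)=2^2=4$. Hence
\[
 A_2(1)=2\pi^2\cdot2\cdot4=16\pi^2,\qquad A_4(1)=-\tfrac{\pi^2}{3}\cdot12\cdot16\pi^2=-64\pi^4.
\]
These are then compared with the values in \cite{Stampach2022}.

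\textbf{Main obstacle.} There is no real obstacle. The only point needing care is justifying ``continuous values at $t=0$''. This follows because the $A_j$ are real analytic on $(-1,\infty)$ by Theorem~\ref{thm:main}, and composition with an analytic map preserves analyticity. Hence the $t=0$ values of \eqref{eq:A2t}--\eqref{eq:A4t} coincide with $A_j(0)$ and are not merely formal limits.
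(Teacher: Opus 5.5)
Your proposal is correct and follows the paper's route: the corollary is a direct substitution of $q=t/(1-t)$ into \eqref{eq:Kq} and \eqref{eq:A4q} (the paper states that \eqref{eq:A4t} ``follows from $q=t/(1-t)$''), and your algebra checks out, including $(1-t)^2(2q^2+5q+5)=2t^2-5t+5$, the choice $L=4$ with $A_3=0$, and the values at $t=0$ and $q=1$. The one point you leave implicit, which the paper makes explicit in Section~\ref{sec:projections}, is what makes the fourth-order agreement with the Hilbert-space benchmark meaningful: $A_4$ is not invariant under a shift of the phase, and the agreement holds because the benchmark expansion, rewritten with $X=\log N+\gamma+6\log 2$, has no $X^{-3}$ term, so the uniqueness clause of Theorem~\ref{thm:main} identifies $\kappa(1)=\gamma+6\log 2$.
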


The remainder of the paper proves Theorems \ref{thm:shooting} and
\ref{thm:main}.  The exact map is derived in Section \ref{sec:variational};
the common critical geometry is isolated in Section \ref{sec:geometry}; an
abstract phase theorem is proved in Section \ref{sec:abstract}; and its
parameter-uniform verification for the power-mean family is given in Section
\ref{sec:verification}.  Section \ref{sec:coefficients} computes the explicit
coefficients, including a calculation made directly at $q=0$.

\section{Finite variation and the exact analytic family}
\label{sec:variational}

Put $M_n=P_t(x_1,\ldots,x_n)$.  Homogeneity allows us to maximize
$\sum_{n=1}^N M_n$ over the closed simplex $\sum x_k=1$.  For $t<0$, we use
the continuous convention that a prefix mean containing a zero coordinate is
zero.  This is the unique continuous extension from the positive orthant: if
$m=\min_{1\le k\le n}x_k$, then
\begin{equation}\label{eq:negative-boundary-control}
 0<P_t(x_1,\ldots,x_n)\le n^{-1/t}m,
 \qquad t<0.
\end{equation}
Indeed, $n^{-1}\sum x_k^t\ge m^t/n$, and raising to the negative power $1/t$
reverses the inequality.  Thus the mean tends uniformly to zero whenever any
coordinate tends to zero, including at the origin and along paths with
different rates.

\begin{proposition}[Interior extremizer and calibration]\label{prop:extremizer}
The maximizer is interior and satisfies
\begin{equation}\label{eq:EL}
 \Lam x_k^{1-t}
 =\sum_{n=k}^N\frac{M_n^{1-t}}{n},
 \qquad 1\le k\le N.
\end{equation}
Conversely, every positive vector satisfying \eqref{eq:EL} calibrates the
global maximum.  Its positive ray is unique.
\end{proposition}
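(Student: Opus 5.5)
The plan has three parts: existence and interiority of a maximizer, the Euler–Lagrange system \eqref{eq:EL} obtained from a Lagrange multiplier, and calibration/uniqueness from concavity. Throughout, write $F(x)=\sum_{n=1}^N M_n$ on the closed simplex $\Delta=\{x_k\ge0,\ \sum x_k=1\}$.

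\emph{Existence.} For $0\le t<1$, $F$ is continuous on $\Delta$. For $t<0$, $F$ is continuous by the convention $M_n=0$ whenever a coordinate of the prefix vanishes, and by the bound \eqref{eq:negative-boundary-control}. Hence a maximizer exists. Each $P_t$ with $t<1$ is concave on the closed orthant, being a power mean of order at most $1$. So $F$ is concave, and a maximizer of $F$ on $\Delta$ is the same as a solution of the KKT conditions.

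\emph{Interiority.} I argue by contradiction. Suppose the maximizer has some zero coordinate, and let $k$ be the largest index with $x_k=0$. I compare the marginal gain from moving mass $\varepsilon$ into $x_k$ with the loss from removing it from some positive coordinate.
\begin{itemize}
\item For $t\le0$: $M_n$ with $n\ge k$ is either $0$, or its one-sided derivative in $x_k$ at $0^+$ is $+\infty$. Indeed $\partial M_n/\partial x_k=n^{-1}(x_k/M_n)^{t-1}$ blows up as $x_k\to0$. The same happens for $t\in(0,1)$, where $x_k^{t-1}\to\infty$ while $M_n$ stays bounded away from $0$.
\item For $t<0$ with $M_n=0$ for all $n\ge k$: first handle the case where every term touching the zero coordinate vanishes. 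Making all coordinates equal to $\varepsilon$ strictly increases those terms. A cleaner route is to compare with $F$ on the slightly perturbed vector $(1-\varepsilon)x+\varepsilon u$, where $u$ is uniform. Concavity gives a directional derivative of $+\infty$ in the direction $u-x$, because at least $M_N$ has infinite slope.
\end{itemize}
The loss from decreasing a positive coordinate is finite. So the maximizer is interior.

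\emph{Euler–Lagrange system.} At an interior point the Lagrange condition reads $\partial F/\partial x_k=\mu$. Since $\partial M_n/\partial x_k=n^{-1}M_n^{1-t}x_k^{t-1}$ (valid also at $t=0$), this gives
\[
x_k^{t-1}\sum_{n\ge k}\frac{M_n^{1-t}}{n}=\mu .
\]
By Euler's identity for the $1$-homogeneous $F$, $\sum_k x_k\,\partial_kF=F$, so $\mu=\mu\sum x_k=F(x)=\Lam$. This is \eqref{eq:EL}.

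\emph{Calibration.} Conversely, suppose a positive $x$ satisfies \eqref{eq:EL}. Summing $x_k\times$\eqref{eq:EL} yields $\Lam\sum x_k=F(x)$. Concavity of $F$ together with the vanishing tangential gradient, $\nabla F=\Lam\mathbf 1$, gives $F(z)\le F(x)+\Lam\,\mathbf 1\cdot(z-x)=\Lam\sum z_k$ for every $z$ in the orthant. Hence $x$ calibrates the global maximum.

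\emph{Uniqueness of the ray.} This is the main obstacle, because $F$ is homogeneous and therefore not strictly concave. I plan to exploit the triangular structure of \eqref{eq:EL}. Given the direction, normalize by $x_1$, i.e.\ fix $M_1=x_1$. Then \eqref{eq:EL} at $k$ and $k+1$, subtracted, gives
\[
\Lam(x_k^{1-t}-x_{k+1}^{1-t})=M_k^{1-t}/k .
\]
This determines $x_{k+1}$ recursively from $x_1,\dots,x_k$ and $\Lam$. So the whole vector is fixed by $(x_1,\Lam)$, with $\Lam$ the common maximal value. Homogeneity then leaves a single ray. I would check that this recursion is exactly the shooting orbit \eqref{eq:shooting-orbit}, with the last equation supplying \eqref{eq:terminal}. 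The only subtlety is that the recursion must be well defined and positive, and this holds because the orbit comes from an actual positive solution.

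Alternatively, uniqueness follows from the strict concavity of $P_t$ transverse to rays ($t<1$). Two non-proportional maximizers $x,x'$ on $\Delta$ would make $F$ constant on the segment between them. But $M_N$, a power mean of order $t<1$, is strictly concave on that segment unless $x\propto x'$. This gives a contradiction, so the ray is unique.
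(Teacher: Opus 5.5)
Your Euler--Lagrange derivation and your calibration argument are fine. Both of your uniqueness arguments also work. The triangular recursion, once calibration forces $\Lambda=\Lambda_N(t)$ for every positive solution of \eqref{eq:EL}, is a valid alternative to the paper's strict-concavity argument, which is your second route.

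The gap is interiority on the branch $t<0$. Both of your mechanisms need an infinite marginal gain at a zero coordinate, and for $t<0$ there is none. Suppose the other entries of a prefix are positive and $x_k\downarrow0$. Then $M_n^t=\frac1n\bigl(\sum_{i\ne k}x_i^t+x_k^t\bigr)$ is dominated by $x_k^t$, so $M_n=n^{-1/t}x_k\bigl(1+O(x_k^{|t|})\bigr)$ and
\[
 \frac{\partial M_n}{\partial x_k}
 =\frac1n\Bigl(\frac{M_n}{x_k}\Bigr)^{1-t}\longrightarrow n^{-1/t}<\infty .
\]
Similarly, along $(1-\varepsilon)x+\varepsilon u$, a boundary point with $z$ zero coordinates has $M_N\sim(z/N)^{1/t}\varepsilon/N$, which is linear in $\varepsilon$; this is just \eqref{eq:negative-boundary-control}. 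So the gain and the normalization loss are both of order $\varepsilon$, and ``finite loss against infinite gain'' proves nothing.

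Your first bullet also fails for $t\le0$ when there are several zeros. Putting mass at the largest zero index cannot revive any $M_n$ lying beyond an earlier zero. At $t=0$ your uniform-direction argument does rescue you, because $M_N$ then grows like $\varepsilon^{z/N}$ with $z<N$. For $0<t<1$ your argument is fine.

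What is missing for $t<0$ is a comparison of the two linear rates, and this needs a genuinely different idea. The paper supplies it in four steps:
\begin{enumerate}
 \item Apply $P_t(x_1,\ldots,x_n)\le n^{-1/t}x_n$ to the last entry of each prefix. This gives $\Lambda_{k-1}(t)\le(k-1)^{-1/t}<k^{-1/t}$.
 \item By induction on the length, the normalized length-$(k-1)$ extremizer is positive.
 \item Extend that extremizer by $\varepsilon$. Then $P_t(x_1,\ldots,x_{k-1},\varepsilon)=k^{-1/t}\varepsilon+o(\varepsilon)$, so the quotient rises by $\bigl(k^{-1/t}-\Lambda_{k-1}(t)\bigr)\varepsilon+o(\varepsilon)>0$. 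Hence $\Lambda_{k-1}(t)<\Lambda_k(t)$.
 \item A boundary point whose first zero is at $j$ has $M_n=0$ for all $n\ge j$. Its value is $0$ if $j=1$, and at most $\Lambda_{j-1}(t)<\Lambda_N(t)$ if $j\ge2$. So it cannot be the maximizer.
\end{enumerate}
Without an argument of this kind, the first assertion of the proposition is unproved on the negative-power branch. That assertion is that the maximizer is interior and hence satisfies \eqref{eq:EL}.
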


\begin{proof}
Existence now follows from continuity on the compact simplex.  For $0<t<1$,
adding $\varepsilon$ at the first zero coordinate produces a gain of order
$\varepsilon^t$, which dominates the linear normalization loss.  At $t=0$, a
boundary point with $x_1=0$ has every prefix geometric mean equal to zero and
therefore cannot maximize.  If the first zero coordinate is $j\ge2$, the first
newly nonzero geometric mean gives a gain of order $\varepsilon^{1/j}$, which
again dominates the linear normalization loss.  For $t<0$, an induction on
the length uses
\[
 P_t(x_1,\ldots,x_k)\le k^{-1/t}x_k
\]
and the explicit strict estimate
\begin{equation}\label{eq:finite-negative-upper}
 \Lam_{k-1}(t)
 \le \max_{1\le n\le k-1}n^{-1/t}
 =(k-1)^{-1/t}<k^{-1/t}.
\end{equation}
Indeed, the displayed pointwise bound holds with any chosen coordinate of a
prefix.  Applying it to the last entry of each prefix gives
$\sum_{n<k}P_t(x_1,\ldots,x_n)\le(k-1)^{-1/t}\sum_{n<k}x_n$.
If a normalized length-$(k-1)$ extremizer is extended by $\varepsilon>0$, then
\[
 P_t(x_1,\ldots,x_{k-1},\varepsilon)
 =k^{-1/t}\varepsilon+o(\varepsilon),
\]
so its quotient increases by
$(k^{-1/t}-\Lam_{k-1}(t))\varepsilon+o(\varepsilon)$.  For a boundary
candidate whose first zero is $j$, all later prefix means are zero.  If
$j=1$, its objective is zero and it cannot maximize; if $j\ge2$, its value is
at most $\Lam_{j-1}(t)<\Lam_N(t)$, again a contradiction.

At an interior maximizer, differentiating under the simplex constraint and
using
\[
 \partial_{x_k}M_n=\frac1nM_n^{1-t}x_k^{t-1}
\]
gives a common Lagrange multiplier.  Euler's identity for each homogeneous
mean is
\[
 \sum_kx_k\partial_{x_k}M_n=M_n.
\]
Multiplying the stationarity equations by $x_k$ and summing first in $k$ and
then in $n$ shows that the multiplier is the extremal value $\Lam$; hence
\eqref{eq:EL} follows.
Conversely, the tangent-plane inequality for the concave
means $P_t$, summed in $n$, proves global optimality.  For completeness, when
$t\ne0,1$ their Hessian satisfies
\[
 D^2P_t(x)[h,h]
 =-\frac{1-t}{n^2}P_t(x)^{1-2t}
 \left[
  \left(\sum x_i^t\right)\left(\sum h_i^2x_i^{t-2}\right)
  -\left(\sum h_ix_i^{t-1}\right)^2
 \right].
\]
The bracket is nonnegative by Cauchy--Schwarz, and equality holds precisely
when $h$ is radial.  The limiting statement at $t=0$ is the same.  Hence the
equality case in concavity forces any two positive maximizers to be
proportional; the simplex normalization removes that freedom.
The same boundary perturbations show that
$\Lam_{N+1}(t)>\Lam_N(t)$: for $0<t<1$ one may first append a zero (the new
prefix mean is already positive), while for $t=0$ and $t<0$ the perturbations
used above apply at the appended coordinate.
\end{proof}

\begin{proof}[Proof of Theorem \ref{thm:shooting}]
Define
\begin{equation}\label{eq:U-y}
 U_n=\frac{x_n}{M_n},\qquad
 y_n=\Lam U_n^{1-t},\qquad q=\frac{t}{1-t}.
\end{equation}
Subtracting the equations \eqref{eq:EL} at $k=n$ and $k=n+1$ gives
\begin{equation}\label{eq:EL-difference}
 \Lam\bigl(x_n^{1-t}-x_{n+1}^{1-t}\bigr)
 =\frac{M_n^{1-t}}{n}.
\end{equation}
Divide \eqref{eq:EL-difference} by $M_n^{1-t}$ and put
\[
 R_n=\frac{x_{n+1}}{M_n}.
\]
Then
\[
 R_n^{1-t}=\frac{y_n-h_n}{\Lam},
 \qquad
 R_n^t=\left(\frac{y_n-h_n}{\Lam}\right)^q.
\]
The exact power-mean update gives
\[
 \frac{M_{n+1}}{M_n}
 =\left(\frac{n+R_n^t}{n+1}\right)^{1/t},
\]
and consequently
\begin{align*}
 y_{n+1}
 &=\Lam\left(\frac{x_{n+1}}{M_{n+1}}\right)^{1-t}\\
 &=(y_n-h_n)
 \left(\frac{n+1}{n+R_n^t}\right)^{(1-t)/t}.
\end{align*}
Since $(1-t)/t=1/q$, this is \eqref{eq:exact-map}.  The first
equation in \eqref{eq:EL} gives $y_1=\Lam$.  The last one gives
$Ny_N=1$, while the unremoved positive tail in \eqref{eq:EL} gives
$ny_n>1$ for $n<N$.

For the continuation at $q=0$, write the bracket in \eqref{eq:exact-map} as
an exponential and use
\[
 \lim_{q\to0}\frac{
 \log(1+h)-\log(1+h\exp(qL))}{q}
 =-\frac{h}{1+h}L,
 \qquad L=\log\!\left(\frac{y-h}{\Lam}\right).
\]
Since $h/(1+h)=1/(n+1)$, this is \eqref{eq:exact-map-zero}.  The same
exponential form proves joint analyticity after removing the factor $q$.
Conversely, suppose an admissible shot reaches $Ny_N=1$.  Put
\[
 U_n=(y_n/\Lam)^{1/(1-t)},\qquad
 R_n=((y_n-h_n)/\Lam)^{1/(1-t)}.
\]
Choose $M_1=x_1>0$ arbitrarily and recursively set
$x_{n+1}=R_nM_n$ and
\[
 M_{n+1}=\left(\frac{nM_n^t+x_{n+1}^t}{n+1}\right)^{1/t},
\]
with the geometric interpretation at $t=0$.  The exact map says precisely
that $x_{n+1}/M_{n+1}=U_{n+1}$, so the reconstructed $M_n$ are the prefix
means and \eqref{eq:EL-difference} holds at every step.  The terminal identity
is the $k=N$ equation in \eqref{eq:EL}; summing the difference equations
backwards therefore recovers all of \eqref{eq:EL}.  Proposition
\ref{prop:extremizer} identifies the shot with the unique extremal ray, and
the admissible shooting parameter is unique.
\end{proof}

The terminal variable is already unified:
\begin{equation}\label{eq:X-variable}
 X_n=ny_n,\qquad X_n>1\ (n<N),\qquad X_N=1.
\end{equation}
The apparently opposite terminal behaviour in the standard positive- and
negative-exponent variables is therefore a coordinate artefact.

\section{The common critical bottleneck}
\label{sec:geometry}

Introduce the $q$-logarithm
\begin{equation}\label{eq:qlog}
 \log_qz=\begin{cases}(z^q-1)/q,&q\ne0,\\ \log z,&q=0,\end{cases}
\end{equation}
and the vector field
\begin{equation}\label{eq:B-field}
 B_{q,\Lam}(y)=1+y\log_q(y/\Lam).
\end{equation}
On every range
$q\in Q\Subset(-1,\infty)$,
$0<\Lam_-\le\Lam\le\Lam_+<\infty$, and $0<y\le Y<\infty$, a uniform
expansion of the exact map gives
\begin{equation}\label{eq:map-field}
 y_{n+1}=y_n-\frac1nB_{q,\Lam}(y_n)
 +O_Q\!\left(\frac{1}{n^2y_n}\right)
\end{equation}
whenever $1/n\le\sigma y_n$ for a sufficiently small fixed $\sigma>0$.
The implied constant depends only on the displayed compact range and on
$\sigma$; below those auxiliary bounds are chosen from $Q$ and are suppressed
from the notation.
Thus the logarithmic-time equation is $\dot y=-B_{q,\Lam}(y)$.

\begin{lemma}[Analytic double root]\label{lem:double-root}
For every $q>-1$, the data \eqref{eq:critical-q} satisfy
\begin{equation}\label{eq:double-root}
 B_{q,\Lstar}(y_*)=0,\qquad
 \partial_yB_{q,\Lstar}(y_*)=0,\qquad
 \partial_y^2B_{q,\Lstar}(y_*)=\frac1{1+q}>0.
\end{equation}
These identities, and all coefficients of the translated vector field, are
analytic across $q=0$.
\end{lemma}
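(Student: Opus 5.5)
The proof is a direct computation, organized so that the case $q=0$ never needs separate treatment. We write $u=y/\Lam$ and use the exponential form of the $q$-logarithm, $\log_q z=(e^{q\log z}-1)/q=\sum_{m\ge1}q^{m-1}(\log z)^m/m!$. This entire function of $(q,\log z)$ is the continuous extension at $q=0$. It gives $\partial_z\log_q z=z^{q-1}$ for all $q$, including $q=0$.

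\emph{First derivative.} Differentiating \eqref{eq:B-field} gives
\[
 \partial_yB_{q,\Lam}(y)=\log_q(y/\Lam)+(y/\Lam)^q .
\]
Write $u_*=y_*/\Lstar$. From \eqref{eq:critical-q} we get $\log u_*=\log(1+q)-\frac{1+q}{q}\log(1+q)=-\frac1q\log(1+q)$, hence $u_*^q=(1+q)^{-1}$. At $q=0$ this reads $u_*=1/e$, consistent with the limit.

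\emph{Vanishing of $B$ and $\partial_yB$.} We have $\log_q u_*=((1+q)^{-1}-1)/q=-1/(1+q)$, and this value is $-1$ at $q=0$. Therefore
\[
 B=1+(1+q)\cdot\bigl(-\tfrac1{1+q}\bigr)=0,
 \qquad
 \partial_yB=-\tfrac1{1+q}+\tfrac1{1+q}=0 .
\]

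\emph{Second derivative.} Next, $\partial_y^2B=\frac1\Lam u^{q-1}+\frac q\Lam u^{q-1}=\frac{(1+q)}{y}u^{q}$. At $y_*=1+q$ this equals $u_*^q=1/(1+q)$, which is positive since $q>-1$. This gives the third identity in \eqref{eq:double-root}.

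\emph{Analyticity across $q=0$.} There are two ingredients.
\begin{enumerate}
 \item $\log\Lstar(q)=\frac{1+q}{q}\log(1+q)$ is analytic on $(-1,\infty)$, because $\log(1+q)/q$ has a removable singularity at $0$. Hence $\Lstar$ and $y_*$ are analytic.
 \item $B_{q,\Lam}(y)$ is jointly analytic in $(q,\Lam,y)$ for $\Lam,y>0$, because of the entire series for $\log_q$ in $(q,\log(y/\Lam))$.
\end{enumerate}
Consequently the translated field $s\mapsto B_{q,\Lstar(q)}(y_*(q)+s)$ is analytic in $(q,s)$ near $s=0$. Its Taylor coefficients in $s$ are $\partial_y^kB$ evaluated along analytic curves, so they are analytic in $q$. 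Each is in fact explicit: $\partial_y^kB$ is a finite combination of $y^{-j}u^q$ with polynomial coefficients in $q$, and $u_*^q=(1+q)^{-1}$.

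The only delicate point is the removable singularity at $q=0$, and the exponential series handles it uniformly. As a check, the $q=0$ case can be verified directly with $B=1+y\log(y/e)$ at $y=1$: $B=0$, $B'=\log y+1=0$, and $B''=1/y=1$.
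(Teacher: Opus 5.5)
Your proposal is correct and follows essentially the paper's route: direct differentiation of $B_{q,\Lam}$ at the critical data, plus a representation of $\log_q$ that is manifestly analytic through $q=0$. The differences are cosmetic. You use the exponential series $\sum_{m\ge1}q^{m-1}(\log z)^m/m!$, which lets you treat $q=0$ uniformly. The paper instead uses the term-by-term equivalent integral identity $\log_q z=\int_0^1 z^{\theta q}\log z\,\dd\theta$, and checks $q=0$ separately via $B_{0,e}(y)=1+y\log(y/e)$.
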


\begin{proof}
For $q\ne0$ the identities follow by direct differentiation of
\eqref{eq:B-field}; their limits at $q=0$ follow from
$B_{0,e}(y)=1+y\log(y/e)$.  Analyticity follows from the integral identity
\[
 \log_qz=\int_0^1z^{\theta q}\log z\,\dd\theta.
\]
\end{proof}

Let
\begin{equation}\label{eq:normal-variables}
 \delta=\Lstar(q)-\Lam,\qquad v=y_*(q)-y.
\end{equation}
Then the oriented field
\begin{equation}\label{eq:g-field}
 g_{q,\delta}(v)=B_{q,\Lstar(q)-\delta}(y_*(q)-v)
\end{equation}
has the uniform normal form
\begin{equation}\label{eq:normal-form}
 g_{q,\delta}(v)
 =a(q)\delta+b(q)v^2
 +O_Q(\delta^2+\delta|v|+|v|^3),
\end{equation}
where
\begin{equation}\label{eq:ab}
 a(q)=\Lstar(q)^{-1},\qquad b(q)=\frac1{2(1+q)}.
\end{equation}
Both sides of the Carleman point therefore cross the same oriented
saddle-node bottleneck; no sign reversal is required in the $y$ coordinate.

\begin{lemma}[Persistence of the scaled-step condition]\label{lem:scaled-step}
Let $Q\Subset(-1,\infty)$.  There are $r,\delta_*,\sigma>0$, an integer
$n_*$, and $c,C>0$, depending only on $Q$, with the following property.
For an admissible step of the translated exact map, put
$\rho_n=\delta+v_n^2$ and $\Delta_n=v_{n+1}-v_n$.
If $q\in Q$, $0\le\delta\le\delta_*$, $n\ge n_*$, $|v_n|\le r$, and
$n^{-1}\le\sigma\rho_n$, then
\[
 0<\frac{c\rho_n}{n}\le\Delta_n\le\frac{C\rho_n}{n},
 \qquad (n+1)\rho_{n+1}\ge n\rho_n.
\]
Consequently the scaled-step condition, once valid at an entry index,
persists at every subsequent index while the orbit remains in $|v|\le r$.
\end{lemma}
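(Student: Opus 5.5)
The plan is to translate the exact map into the normal variables and read both claims off the normal form \eqref{eq:normal-form}, using the map--field expansion \eqref{eq:map-field} to control the discretization error. With $v=y_*-y$, the step is
\[
 \Delta_n=v_{n+1}-v_n=-(y_{n+1}-y_n)=\frac1n\,g_{q,\delta}(v_n)+E_n ,
 \qquad |E_n|\le \frac{C_0}{n^2y_n}.
\]
On $|v|\le r$ with $r<\tfrac12\min_Q(1+q)$, the quantity $y_n$ is bounded below by a constant depending on $Q$. Hence $1/n\le\sigma y_n$ holds once $n\ge n_*$, so \eqref{eq:map-field} applies and $|E_n|\le C_1/n^2$. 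The two inequalities are then proved in turn: first the two-sided bound on $\Delta_n$, then the monotonicity of $n\rho_n$.

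\emph{Step 1: two-sided bound on $\Delta_n$.} By \eqref{eq:normal-form} and \eqref{eq:ab}, for $q\in Q$,
\[
 g_{q,\delta}(v)=a\delta+bv^2+O(\delta^2+\delta|v|+|v|^3).
\]
The quantities $a,b$ lie between positive constants on $Q$. Since $\delta\ge0$, the main term is comparable to $\rho=\delta+v^2$. For the error, $\delta^2\le\delta_*\rho$ and $|v|^3\le r\rho$, while $\delta|v|\le r\delta\le r\rho$. Shrinking $r$ and $\delta_*$ therefore gives
\[
 c'\rho\le g\le C'\rho .
\]
For the discretization error we use the scaled-step hypothesis $1/n\le\sigma\rho_n$, which gives $|E_n|\le C_1/n^2\le C_1\sigma\rho_n/n$. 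Choosing $\sigma<c'/(2C_1)$ yields $c\rho_n/n\le\Delta_n\le C\rho_n/n$, with $c=c'/2$. In particular $\Delta_n>0$.

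\emph{Step 2: monotonicity of $n\rho_n$.} This is where the sign of $v_n$ matters, and it is the main obstacle. We have
\[
 \rho_{n+1}-\rho_n=\Delta_n(2v_n+\Delta_n).
\]
If $v_n\ge0$ this difference is positive, so $(n+1)\rho_{n+1}>n\rho_n$ immediately. If $v_n<0$, the drop $\rho_n-\rho_{n+1}$ is at most $2|v_n|\Delta_n\le 2rC\rho_n/n$. Hence
\[
 (n+1)\rho_{n+1}\ge(n+1)\rho_n\bigl(1-2rC/n\bigr)\ge n\rho_n
 \quad\text{when } 2rC(n+1)\le n .
\]
This holds, for example, if $4rC\le1$ and $n\ge1$. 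Note that $C$ here can be taken independent of $r$ once $r\le r_0$: the upper constant $C'$ only improves as $r$ decreases, so shrinking $r$ last is legitimate. The delicate point is exactly this ordering of constants. The constants should be fixed in the order $r_0,\delta_*$ (normal-form control), then $\sigma$ (discretization), then $C$, then $r\le\min(r_0,1/(4C))$, then $n_*$. None of the later choices may feed back into the earlier ones. If $C$ did depend on $\sigma$ through $n_*$, I would instead bound $E_n$ by $C_1/n^2\le C_1\sigma\rho_n/n$ directly, which keeps $C\le C'+1$.

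\emph{Step 3: persistence.} The persistence statement follows by induction. If $1/n\le\sigma\rho_n$, then
\[
 \frac1{n+1}\le\frac1n\cdot\frac{n\rho_n}{(n+1)\rho_{n+1}}\,\rho_{n+1}\cdot\frac{n+1}{n}\cdot\frac1{n+1}\cdots
\]
More simply, $(n+1)\rho_{n+1}\ge n\rho_n\ge1/\sigma$ gives $1/(n+1)\le\sigma\rho_{n+1}$. The hypotheses at index $n+1$ are thus restored as long as $|v_{n+1}|\le r$ and the step remains admissible. Admissibility holds automatically because $y$ stays near $y_*$, far above $h_n$.
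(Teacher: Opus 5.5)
Your proposal is correct and follows the paper's proof essentially line by line. The drift bounds come from \eqref{eq:map-field} and \eqref{eq:normal-form}, with $\sigma$ chosen to absorb the $O(n^{-2})$ remainder. The monotonicity of $n\rho_n$ comes from $\rho_{n+1}\ge\rho_n(1-2Cr/n)$ after shrinking $r$ so that $4Cr\le1$ without changing $C$, which is exactly the ordering of constants you identify. The only blemishes are presentational: delete the abandoned first display in Step 3 (the line beginning ``More simply'' is the whole argument), and avoid using $\sigma$ both for the threshold in \eqref{eq:map-field} and for the lemma's step constant.
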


\begin{proof}
On a preliminary fixed small neighbourhood, \eqref{eq:map-field} and
\eqref{eq:normal-form} give
\[
 \Delta_n=\frac{g_{q,\delta}(v_n)}n+O_Q(n^{-2}),
 \qquad c_0\rho_n\le g_{q,\delta}(v_n)\le C_0\rho_n.
\]
Choose $\sigma$ small enough to absorb the $O_Q(n^{-2})$ term whenever
$n^{-1}\le\sigma\rho_n$.  This gives the displayed drift bounds with
fixed $c,C$.  Shrink $r$ further so that $4Cr\le1$, without increasing $C$.
Since $\Delta_n>0$,
\[
 \rho_{n+1}=\rho_n+2v_n\Delta_n+\Delta_n^2
 \ge\rho_n\left(1-\frac{2Cr}{n}\right).
\]
For $n\ge1$,
\[
 (n+1)\left(1-\frac{2Cr}{n}\right)-n
 =1-2Cr\left(1+\frac1n\right)\ge0.
\]
Thus $(n+1)\rho_{n+1}\ge n\rho_n\ge\sigma^{-1}$, which is the step
condition at $n+1$.  Induction proves persistence, including within each
incoming shell.
\end{proof}

\begin{lemma}[Critical capture and coarse scale]\label{lem:capture}
Let $Q\Subset(-1,\infty)$.  The critical orbit of
\eqref{eq:shooting-orbit} with $\Lam=\Lstar(q)$ exists globally and satisfies
\begin{equation}\label{eq:critical-orbit}
 y_n^*(q)>y_*(q),\qquad y_{n+1}^*(q)<y_n^*(q),
\end{equation}
and, uniformly for $q\in Q$,
\begin{equation}\label{eq:riccati-asymptotic}
 y_n^*(q)-y_*(q)
 =\frac{2(1+q)}{\log n}
 +O_Q\!\left(\frac{\log\log n}{(\log n)^2}\right).
\end{equation}
Moreover there are $c_Q,C_Q>0$ such that, for every $N\ge2$,
\begin{equation}\label{eq:coarse-scale}
 \frac{c_Q}{(\log N)^2}
 \le \Lstar(q)-\lambda_N(q)
 \le \frac{C_Q}{(\log N)^2}.
\end{equation}
\end{lemma}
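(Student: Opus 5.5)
We would prove Lemma~\ref{lem:capture} in three stages: global existence and monotonicity of the critical orbit, the Riccati asymptotic \eqref{eq:riccati-asymptotic}, and the two-sided coarse scale \eqref{eq:coarse-scale} by comparison of shots.

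\emph{Existence and monotonicity.} At $\Lam=\Lstar$, Lemma~\ref{lem:double-root} gives $B_{q,\Lstar}(y)\ge0$ with equality only at $y_*$, since $B$ is convex in $y$ with a double root there. We would show by induction that $y_n^*>y_*$. The base case is $y_1^*=\Lstar>1+q=y_*$; for $q\ne0$ this is the elementary inequality $(1+q)^{1/q}>1$. For the inductive step we would use the exact map \eqref{eq:exact-map} directly, not the approximation \eqref{eq:map-field}. The map $T_{n,q,\Lstar}$ is increasing in $y$, because $((y-h)/\Lam)^q$ enters the denominator with exponent $1/q$, so the combination is monotone for either sign of $q$. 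It therefore suffices to check the exact identity-type inequality $T_{n,q,\Lstar}(y_*)>y_*$, i.e.\ that the discrete fixed point lies below $y_*$. Writing $z=(1+q-h)/\Lstar$, this reduces to a one-variable inequality in $h\in(0,1]$, which we expect to follow from strict concavity of $\log_q$ (a discrete version of $B\ge0$). Descent $y_{n+1}^*<y_n^*$ is equivalent to $T(y)<y$ for $y>y_*$, which follows similarly from the exact bracket bound $(1+h)/(1+hs^q)\le(\ldots)$ together with $B>0$. Admissibility, $y_n^*>y_*\ge 1/n$ for $n\ge2$, is then automatic, with $q>-1$ ensuring $1+q>0$.

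\emph{Riccati asymptotic.} Set $v_n=y_*-y_n^*<0$ and $u_n=-v_n>0$. Since $u_n$ is decreasing and positive, it has a limit, and the limit must be a zero of $g_{q,0}$, hence $u_n\to0$. By \eqref{eq:map-field} and \eqref{eq:normal-form} with $\delta=0$,
\[
 u_{n+1}=u_n-\frac{b u_n^2}{n}+O\!\left(\frac{u_n^3}{n}+\frac1{n^2}\right).
\]
Passing to $w_n=1/u_n$ gives $w_{n+1}-w_n=b/n+O(u_n/n+1/(n^2u_n^2))$. A crude first pass gives $u_n\asymp1/\log n$. The errors then sum to $O(\log\log n)$, so $w_n=b\log n+O(\log\log n)$, and inverting with $1/b=2(1+q)$ yields \eqref{eq:riccati-asymptotic}. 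For the $1/n^2$ error to be harmless we need $1/n\lesssim u_n^2$, which is exactly the scaled-step regime of Lemma~\ref{lem:scaled-step}. Uniformity in $q\in Q$ comes from the uniform constants in \eqref{eq:map-field} and \eqref{eq:normal-form}, together with a compactness argument for the finite initial segment, where the orbit depends continuously on $q$.

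\emph{Coarse scale.} This is the main obstacle. The shot $y_n$ depends monotonically on $\Lam$, since $T$ is increasing in $y$ and in $\Lam$. Hence $\lambda_N$ is characterized by the orbit first touching $ny_n=1$ at $n=N$, and $\lambda_N<\Lstar$ because the critical orbit stays above $y_*$ for all $n$. For $\delta=\Lstar-\Lam>0$, the normal form $g\ge a\delta+bv^2$ forces $v$ to traverse the bottleneck $|v|\le r$ in logarithmic time $\asymp\pi/\sqrt{ab\delta}$. This comes from summing $\Delta_n\asymp\rho_n/n$ from Lemma~\ref{lem:scaled-step} and comparing with $\int dv/(a\delta+bv^2)$. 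After exiting the bottleneck, the orbit reaches $ny_n=1$ in $O(1)$ further logarithmic time, because $B$ is bounded below away from $y_*$ and $y$ decreases to $O(1/n)$ quickly.

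The upper bound in \eqref{eq:coarse-scale} follows because a shot with $\delta\ge C/(\log N)^2$ exits too early: it would terminate before $N$, hence it lies below $\lambda_N$. Conversely, for $\delta\le c/(\log N)^2$ the orbit is still trapped near $y_*$ at time $N$, which gives the lower bound. The delicate point is the entry into the bottleneck: the scaled-step condition $1/n\le\sigma\rho_n$ must hold at entry. We would first try to secure this using the critical orbit's position from \eqref{eq:riccati-asymptotic}, combined with continuity of finitely many steps in $\delta$. Uniformity in $Q$ then follows because all constants come from Lemmas~\ref{lem:double-root} and \ref{lem:scaled-step}.
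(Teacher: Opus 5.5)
\textbf{There is a genuine gap in your first stage, and it propagates.} The inequality you propose to verify, $T_{n,q,\Lstar}(y_*)>y_*$, is false. At the critical parameter the exact map moves every admissible point strictly down: $T_{n,q,\Lstar}(y)<y$ for all $y>1/n$. This is exactly what you assert for the descent $y_{n+1}^*<y_n^*$, and by continuity of $T$ the two claims cannot both hold. Concretely, for $n\ge2$ one has $(n+1)\log T_{n,0,e}(1)=n\log(1-1/n)+1<0$, and $T_{n,1,4}(2)=2-\tfrac12n^{-2}+O(n^{-3})$. So the half-line $y>y_*$ is not forward invariant, and your induction cannot close with the fixed barrier $y_*$. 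The critical orbit stays above $y_*$ only because it is the globally admissible separatrix.

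Your admissibility claim also fails on its own: $y_*=1+q\ge1/n$ is false when $q<-1/2$ and $n=2$. Both problems are repaired by a \emph{moving} barrier $\underline y_n=y_*+1/n$. Put $s=1+q$, so that $(s/\Lstar)^q=1/s$. The exact map then gives $T_{n,q,\Lstar}(s+1/n)=s\,f_q\bigl(1/(s(n+1))\bigr)>s+1/(n+1)$, where $f_q(x)=(1-qx)^{-1/q}$ (read as $e^x$ at $q=0$) is strictly convex with $f_q(0)=f_q'(0)=1$. Also $y_1^*=\Lstar=sf_q(1/s)>s+1$. Monotonicity of $T$ in $y$ then gives $y_n^*>y_*+1/n$ for all $n$, which yields positivity and admissibility at once. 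The paper's Step~1 takes a different route: it gets global admissibility from the variational facts $\lambda_N<\Lstar$, $\lambda_N\uparrow\Lstar$ together with monotonicity of the shot in $\Lam$. It then excludes a crossing of $y_*$ using the terminal-layer lemma, and uses the barrier $y_*+1/n$ only in the Riccati step.

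\textbf{The Riccati step has a second gap, hidden in the ``crude first pass'' $u_n\asymp1/\log n$.} The upper bound $u_n\lesssim1/\log n$ is a routine barrier argument. The lower bound is not. Knowing only $u_n>0$, nothing prevents $u_n\lesssim n^{-1/2}$, and in that range the additive $O(n^{-2})$ error dominates the drift $bu_n^2/n$. Lemma~\ref{lem:scaled-step} only \emph{propagates} the regime $1/n\le\sigma u_n^2$ once it holds at some index. It does not produce an entry index, let alone one uniform in $q\in Q$, and uniform constants in \eqref{eq:map-field}--\eqref{eq:normal-form} cannot supply one either.

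The paper closes this gap with the same barrier. Because $y_*+1/n$ is an exact strict subsolution, the gap $r_n=y_n^*-(y_*+1/n)$ satisfies the purely multiplicative inequality $r_{n+1}\ge r_n(1-C_Qr_n/n-C_Q/n^2)$, with no additive loss. Combined with $\min_Q r_{n_0}>0$, the reciprocal recursion gives $r_n\ge c_Q/(1+\log n)$ uniformly on $Q$. Only after this can the $n^{-2}$ term be absorbed and your reciprocal summation go through.

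Your coarse-scale stage is sound in outline and close to the paper's Steps~3--4: monotone comparison of shots, bottleneck time $\asymp\delta^{-1/2}$, and entry secured by continuity of finitely many steps from the critical orbit. It does inherit the first gap, since $\lambda_N<\Lstar$ needs $y_N^*>1/N$, not just $y_N^*>y_*$. Also, the final approach to $ny_n=1$ has to go through the terminal-layer lemma, because \eqref{eq:map-field} degenerates once $y\asymp1/n$.
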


\begin{proof}
We divide the proof into four steps.

\emph{Step 1: global admissibility and exact monotonicity.}
Write $h=1/n$ and $z=(y-h)/\Lam$.  Direct differentiation of
\eqref{eq:exact-map}, with the continuous values at $q=0$, gives
\begin{equation}\label{eq:T-monotone}
 \partial_y\log T_{n,q,\Lam}(y)
 =\frac1{(y-h)(1+hz^q)}>0,
 \qquad
 \partial_\Lam\log T_{n,q,\Lam}(y)
 =\frac{hz^q}{\Lam(1+hz^q)}>0.
\end{equation}
The proof of Proposition \ref{prop:extremizer} also shows that the finite
constants are strictly increasing.  Their limit is the infinite Hardy
constant: applying the finite inequalities to truncations gives one direction,
while applying the infinite inequality to summable positive extensions of a
finite vector and then letting the tail mass tend to zero gives the other.
Consequently $\lambda_N(q)<\Lstar(q)$ and
$\lambda_N(q)\uparrow\Lstar(q)$.  This convergence is uniform on $Q$ by Dini's
theorem; continuity of each $\lambda_N$ follows from the compact variational
problem and the joint continuous boundary extension of the means.  If
$y_k^{(N)}$ denotes the extremal orbit of length $N$, then
\eqref{eq:T-monotone} gives, inductively,
\[
 y_k^*>y_k^{(N)}>1/k\quad (k<N),
 \qquad y_N^*>y_N^{(N)}=1/N.
\]
Thus the shot starting from $y_1^*=\Lstar$ never reaches the boundary of a
map domain.  Since $N$ is arbitrary, it defines a global admissible orbit.

The same calculation gives an exact one-sided direction.  Since
\[
 \partial_\Lam B_{q,\Lam}(y)
 =-\frac{y}{\Lam}\left(\frac y\Lam\right)^q<0,
\]
strict convexity and
\eqref{eq:double-root} imply
\begin{equation}\label{eq:B-positive-subcritical}
 B_{q,\Lam}(y)\ge B_{q,\Lstar}(y)\ge0,
 \qquad 0<\Lam\le\Lstar.
\end{equation}
Put $u=h/y\in(0,1)$.  For $q>0$, the inequality
$T_{n,q,\Lam}(y)<y$ is equivalent to
\[
 \frac{(1-u)^{-q}-1}{h}+(y/\Lam)^q>1.
\]
Indeed,
\[
 (1-u)^{-q}-1
 =q\int_0^u(1-s)^{-q-1}\,\dd s>qu=\frac{qh}{y}.
\]
On dividing by $h$, this makes the first term strictly larger than $q/y$,
whereas \eqref{eq:B-positive-subcritical} gives
$(y/\Lam)^q\ge1-q/y$.  For $-1<q<0$, the equivalent inequality is the same
display with ``$<$'' in place of ``$>$''; now
$(y/\Lam)^q\le1-q/y$.  Writing $r=-q\in(0,1)$ gives symmetrically
\[
 (1-u)^r-1
 =-r\int_0^u(1-s)^{r-1}\,\dd s<-ru=qu,
\]
because $(1-s)^{r-1}>1$ for $0<s<u$.  Thus
$((1-u)^{-q}-1)/h<q/y$.  At $q=0$,
\[
 (1+h)\log\frac{T_{n,0,\Lam}(y)}y
 =\log(1-u)+h\log(\Lam/y)\le\log(1-u)+u<0,
\]
again by \eqref{eq:B-positive-subcritical}.  Consequently every critical or
subcritical admissible orbit is strictly decreasing in the $y$ coordinate.

We now exclude a crossing of $y_*$.  If $y_j^*\le y_*$, strict decrease gives
$y_{j+1}^*<y_*$, and the subsequent orbit is decreasing below $y_*$.  Let
$\ell\ge0$ be its limit.  If $\ell>0$, the compact interval
$[\ell,y_{j+1}^*]$ excludes the only zero $y_*$ of the critical field, and
the expansion \eqref{eq:map-field} would give
$y_n^*-y_{n+1}^*$ bounded below by a fixed positive multiple of $1/n$, a
contradiction.  If $\ell=0$, choose
$A$ large.  As long as $X_n=ny_n^*>A$, the same expansion and the positive
minimum of $B_{q,\Lstar}$ on $[0,y_{j+1}^*]$ again give
$y_n^*-y_{n+1}^*$ bounded below by a fixed positive multiple of $1/n$; hence
the orbit must enter $1<X_n\le A$.
Lemma \ref{lem:terminal-layer} below then forces $X_n\le1$ after a bounded
number of further steps, contradicting global admissibility.  This use is not
circular: that lemma follows directly from the exact map and does not use the
present coarse-scale estimate.  We have proved $y_n^*>y_*$.  Its decreasing
limit cannot exceed $y_*$, since otherwise \eqref{eq:map-field} again supplies
a nonsummable harmonic decrement.  Therefore $y_n^*\downarrow y_*$.  Every
finite composition is continuous in $q$, the sequence is pointwise decreasing
in the index $n$, and the limit $y_*(q)$ is continuous.  Dini's theorem,
applied in $n$, therefore makes this convergence uniform for $q\in Q$.

\emph{Step 2: discrete Riccati barriers.}
Set $w_n=y_n^*-y_*>0$ and $\ell_n=1+\log n$.  Uniform Taylor expansion at the
double root gives
\begin{equation}\label{eq:critical-riccati}
 w_{n+1}=w_n-\frac{b(q)}n w_n^2+E_n,
 \qquad
 |E_n|\le C_Q\left(\frac{w_n^3}{n}+\frac1{n^2}\right),
 \qquad b(q)=\frac1{2(1+q)}.
\end{equation}
The additive $O(n^{-2})$ term prevents one from obtaining a uniform lower
barrier merely by shrinking a trial multiple of $\ell_n^{-1}$.  We first
construct an exact auxiliary barrier.  Put $s=1+q$ and
\[
 \underline y_n=s+\frac1n,
 \qquad
 f_q(x)=
 \begin{cases}
  (1-qx)^{-1/q},&q\ne0,\\
  e^x,&q=0.
 \end{cases}
\]
On the positive real domains used here,
\[
 f_q(0)=f_q'(0)=1,
 \qquad
 f_q''(x)=(1+q)(1-qx)^{-1/q-2}>0,
\]
with the continuous interpretation at $q=0$.  Hence $f_q(x)>1+x$ for
$x>0$.  The critical identities
$(s/\Lstar)^q=1/s$ and
$\Lstar=s f_q(1/s)$, also understood continuously at $q=0$, give directly
from the exact map
\begin{equation}\label{eq:exact-critical-barrier}
 T_{n,q,\Lstar}(\underline y_n)
 =s f_q\!\left(\frac1{s(n+1)}\right)
 >s+\frac1{n+1}=\underline y_{n+1},
 \qquad
 y_1^*=\Lstar>s+1=\underline y_1.
\end{equation}
Since the exact map is increasing, induction proves
$y_n^*>\underline y_n$ for every $n$.  Define
\[
 r_n(q)=y_n^*(q)-\underline y_n(q)>0.
\]
The uniform critical capture proved in Step~1 implies $r_n\to0$ uniformly on
$Q$.  Uniformly for $s+w$ between $\underline y_n$ and $y_n^*$, Taylor
expansion of the exact derivative gives
\[
 \partial_yT_{n,q,\Lstar}(s+w)
 =1-\frac1n\partial_yB_{q,\Lstar}(s+w)+O_Q(n^{-2})
 \ge1-\frac{C_Qr_n}{n}-\frac{C_Q}{n^2},
\]
because $\partial_yB_{q,\Lstar}(s+w)=O_Q(w)$ and
$0<w\le r_n+1/n$.  The strictly positive excess in
\eqref{eq:exact-critical-barrier} and the mean-value theorem therefore yield,
after one fixed index,
\[
 r_{n+1}\ge r_n
 \left(1-\frac{C_Qr_n}{n}-\frac{C_Q}{n^2}\right).
\]
As $r_n\to0$, taking reciprocals and enlarging the constant gives
\begin{equation}\label{eq:critical-barrier-reciprocal}
 \frac1{r_{n+1}}
 \le\left(1+\frac{C_Q}{n^2}\right)\frac1{r_n}
 +\frac{C_Q}{n}.
\end{equation}
The product $\prod_n(1+C_Q/n^2)$ converges.  Discrete variation of constants
in \eqref{eq:critical-barrier-reciprocal}, together with the positive minimum
of $r_{n_0}(q)$ on $Q$ at a fixed starting index $n_0$, gives
\[
 r_n(q)\ge\frac{c_Q}{1+\log n}.
\]
Consequently $w_n=r_n+1/n\ge c_Q/\ell_n$.  The additive $n^{-2}$ error in
\eqref{eq:critical-riccati} can now be absorbed into $w_n^2/n$.  Uniform
smallness of $w_n$ also absorbs the cubic term, and hence
\begin{equation}\label{eq:critical-upper-drift}
 w_{n+1}\le w_n-\frac{b_-}{2n}w_n^2,
 \qquad b_-:=\min_{q\in Q}b(q)>0.
\end{equation}
Taking reciprocals and summing proves $w_n\le C_Q/\ell_n$.  Thus
$w_n\asymp_Q\ell_n^{-1}$, and a reciprocal expansion in
\eqref{eq:critical-riccati} is now justified term by term:
\[
 \frac1{w_{n+1}}-\frac1{w_n}
 =\frac{b(q)}n
 +O_Q\!\left(\frac1{n\ell_n}+\frac{\ell_n^2}{n^2}\right).
\]
After summation,
$w_n^{-1}=b(q)\log n+O_Q(\log\log n)$; inversion proves
\eqref{eq:riccati-asymptotic}.

\emph{Step 3: the discrete bottleneck passage.}
Put $\delta=\delta_N=\Lstar(q)-\lambda_N(q)$ and
$v_n=y_*-y_n$.  Choose the local radius and constants from
Lemma~\ref{lem:scaled-step}, and write $\sigma_Q=\sigma$.  For $n\ge n_*$
its drift comparison is
\begin{equation}\label{eq:shell-drift}
 \frac{c_Q}{n}(\delta+v_n^2)
 \le v_{n+1}-v_n
 \le\frac{C_Q}{n}(\delta+v_n^2)
\end{equation}
whenever $n^{-1}\le\sigma_Q(\delta+v_n^2)$.  The increments are positive even
before this comparison applies, by Step 1.

Choose a fixed large $n_0\ge n_*$ on the incoming critical orbit.  Step 2 permits
this choice so that $v_{n_0}^*$ lies in a fixed incoming annulus and
$n_0^{-1}\le\sigma_Q|v_{n_0}^*|^2/4$.  Continuity of the first $n_0$
compositions gives the same facts for the subcritical orbit when $\delta$ is
small.  Lemma~\ref{lem:scaled-step} then preserves the condition at every
step throughout the local passage.  Decompose the incoming part into shells
\[
 \rho_{j+1}\le |v|\le\rho_j,
 \qquad \rho_j=2^{-j}\rho_0,
\]
down to $\rho_j\asymp\sqrt\delta$.  On such a shell,
\eqref{eq:shell-drift} shows that its harmonic crossing time $H_j$ satisfies
\begin{equation}\label{eq:shell-time}
 \frac{c_Q}{\rho_j}\le H_j:=\sum_{n\,\text{in the shell}}\frac1n
 \le\frac{C_Q}{\rho_j}.
\end{equation}
The step condition used in this comparison has already been established
independently of \eqref{eq:shell-time}, both within each shell and between
successive shells.  The last-step overshoot is harmless because
\eqref{eq:shell-drift} is $o_Q(\rho_j)$ there.

At $|v|\le C\sqrt\delta$, the same preserved condition applies and
$\delta+v^2\asymp\delta$.  The drift is comparable with $\delta/n$, and the
layer has width comparable with $\sqrt\delta$; its harmonic crossing time is
therefore comparable with $\delta^{-1/2}$.  The outgoing shells obey the same
estimates under the preserved step condition.  Since
$\sum_{\rho_j\ge\sqrt\delta}\rho_j^{-1}\asymp\delta^{-1/2}$, the complete
bottleneck contributes
\begin{equation}\label{eq:bottleneck-coarse-time}
 c_Q\delta^{-1/2}
 \le\sum_{n\,\text{in the bottleneck}}\frac1n
 \le C_Q\delta^{-1/2}.
\end{equation}

\emph{Step 4: entry, exit, and conversion to $N$.}
Before the chosen incoming section and after the outgoing section, the field
is uniformly bounded away from zero on each fixed compact $y$-interval.
The same discrete comparison gives $O_Q(1)$ harmonic time there.  Once
$y\asymp1/n$, Lemma \ref{lem:terminal-layer} gives only $O_Q(1)$ further
indices and hence $O_Q(1)$ additional harmonic time.  Since
\[
 \sum_{n=n_0}^{N-1}\frac1n=\log N+O_Q(1),
\]
\eqref{eq:bottleneck-coarse-time} yields
$\log N\asymp_Q\delta_N^{-1/2}$.  Enlarging the constants to cover the
finitely many remaining values $N\ge2$ proves \eqref{eq:coarse-scale}.
\end{proof}

\begin{lemma}[Uniform terminal layer]\label{lem:terminal-layer}
Let $Q\Subset(-1,\infty)$ and $0<\Lam_-<\Lam_+<\infty$.  For every $A>1$
there are $n_A^{\mathrm{min}}\ge1$ and $\eta_{Q,A}>0$ with the following property.  If
$q\in Q$, $\Lam_-\le\Lam\le\Lam_+$, $n\ge n_A^{\mathrm{min}}$, and an admissible step has
$1<X_n\le A$ and $X_{n+1}\ge1$, then
\begin{equation}\label{eq:terminal-gap}
 X_n-1\ge\eta_{Q,A},
 \qquad X_{n+1}=X_n-1+o_{Q,A}(1).
\end{equation}
Consequently an admissible orbit has only $O_{Q,A}(1)$ consecutive indices in
the layer $1<X_n\le A$.
\end{lemma}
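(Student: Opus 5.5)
The plan is to rewrite the exact map \eqref{eq:exact-map} in the terminal variable $X_n=ny_n$ and read off its behaviour on the layer $1<X\le A$, where $y_n\asymp 1/n$. Put $h=1/n$ and $\xi=X_n-1\in(0,A-1]$. Then $y_n-h=\xi/n$ and $z=(y_n-h)/\Lam=\xi/(n\Lam)$, so
\[
 X_{n+1}=\frac{n+1}{n}\,\xi\,
 \left[\frac{1+h}{1+h z^q}\right]^{1/q}.
\]
The whole lemma is then a statement about the single quantity $\varepsilon:=hz^q=n^{-1-q}\xi^q\Lam^{-q}$ and about the bracket $\bigl((1+h)/(1+\varepsilon)\bigr)^{1/q}$. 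To treat all $q\in Q$ at once, including $q=0$, I will write this bracket in the exponential form used for the removable continuation in Theorem~\ref{thm:shooting}:
\[
 \exp\!\left(\frac{\log(1+h)-\log(1+h e^{q\log z})}{q}\right),
\]
with the integral representation of the difference quotient. All constants can then be taken from the compact range of $(q,\Lam)$.

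\emph{Step 1: the gap.} I will show that $X_{n+1}\ge1$ forces $\xi\ge\eta_{Q,A}$, by bounding $X_{n+1}$ above by a function of $\xi$ that tends to $0$ as $\xi\downarrow0$, uniformly on $Q$ and for large $n$. There are three cases, although the exponential form can merge them into one estimate.
\begin{itemize}
 \item For $q\ge0$ (and the $q=0$ limit), the bracket is at most $(1+h)^{1/q}\le e^{1/(nq)}$, or at most $\Lam^{1/(n+1)}(\ldots)$ at $q=0$. This gives $X_{n+1}\le C\xi$, or $X_{n+1}\le C\xi^{n/(n+1)}n^{1/(n+1)}\le C'\xi^{1/2}$ at $q=0$.
 \item For $-1<q<0$, the bracket is at most $(1+\varepsilon)^{1/|q|}(1+h)^{-1/|q|}$. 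If $\varepsilon\le1$, this gives $X_{n+1}\le C\xi$.
 \item If $\varepsilon>1$, the bracket is at most $(2\varepsilon)^{1/|q|}$. Then
 \[
 X_{n+1}\le C\,n\,\xi\cdot n^{-(1-|q|)/|q|}\,\xi^{-1}=C\,n^{1-(1-|q|)/|q|},
 \]
 and $(1-|q|)/|q|$ exceeds $1$ only when $|q|<1/2$. So this case needs more care: I will instead use the exact expression $y_{n+1}\approx\Lam h^{-1/q}$, that is $X_{n+1}\approx\Lam\,n^{1-1/|q|}$. Since $1/|q|>1$ uniformly on $Q$, this tends to $0$ as $n\to\infty$, so it is $<1$ for $n\ge n_A^{\min}$.
\end{itemize}
In every case, $X_{n+1}\ge1$ is impossible once $\xi$ is below some $\eta_{Q,A}$ and $n$ is large.

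\emph{Step 2: the asymptotic step.} On $\eta_{Q,A}\le\xi\le A$, we have $\varepsilon\le n^{-1-q}\max(\eta^q,A^q)\Lam_\pm^{|q|}\to0$ uniformly, since $1+q$ is bounded below on $Q$. Hence the bracket is $1+O_{Q,A}(h+\varepsilon/|q|)$. In the exponential form this is $1+O_{Q,A}\bigl(h(1+|\log z|)\bigr)=1+O(\log n/n)$, uniformly including near $q=0$. This gives $X_{n+1}=\xi(1+o(1))=X_n-1+o_{Q,A}(1)$.

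\emph{Step 3: the consequence.} Take $n_A^{\min}$ so large that the $o(1)$ term is at most $1/2$. Then on the layer each admissible step with $X_{n+1}\ge1$ decreases $X$ by at least $1/2$, so at most $2A+1$ consecutive indices lie in $1<X\le A$.

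\emph{Main obstacle.} I expect the main difficulty to be uniformity near $q=0$ in Step 1, where the $q<0$ blow-up of $hz^q$ and the $q=0$ formula must be matched. I would handle it through the integral form of $\log_q$, bounding $|(\log(1+h)-\log(1+hz^q))/q|$ by $h|\log z|\max(1,z^q)$ and splitting according to whether $\varepsilon\le1$.
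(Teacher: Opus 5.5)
Your outline follows the paper's proof. You rewrite the step in $X_n$, put the bracket in exponential/integral form, split by the sign of $q$ and the size of $\varepsilon=hz^q$, then get $X_{n+1}=X_n-1+o(1)$ on $\xi\ge\eta_{Q,A}$ and count steps.

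The gap is in Step~1, at the point you yourself call the main obstacle: the gap estimate is never made uniform as $q\to0^-$. Your bullet bounds carry the constants $e^{1/(nq)}$ and $2^{1/|q|}$, which blow up at $q=0$.

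For $q\ge0$ your remedy works. With $L=\log(n\Lam/\xi)$, the integral form gives $0\le\Psi\le hL$, hence $X_{n+1}\le(1+h)\xi^{1-1/n}(n\Lam)^{1/n}$ uniformly.

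For $q=-p<0$ and $\varepsilon\le1$, however, the bound $h|\log z|\max(1,z^q)$ equals $L\varepsilon$. It yields only $\log X_{n+1}\le\log(1+h)+\log(n\Lam)-(1-\varepsilon)L$, which says nothing once $\varepsilon$ is near $1$. For instance, take $Q\supset[-\tfrac12,0]$, $q=-1/n$ and $\xi=\Lam n^{1-n}$. Then $\varepsilon=1$ exactly, and your estimate returns only $X_{n+1}\le(1+h)n\Lam$. In truth $\Psi=n\log\bigl(2n/(n+1)\bigr)$, so $X_{n+1}$ is super-exponentially small. Thus splitting at $\varepsilon\le1$ with that bound does not close the case it was meant for.

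The missing ingredient is to keep the denominator in $\Psi=L\int_0^1 he^{\theta pL}\bigl(1+he^{\theta pL}\bigr)^{-1}\dd\theta$ and to compare $L$ with $\log n$, that is, $\xi$ with $1/n$:

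\begin{itemize}
\item If $\varepsilon\le1$, the integrand is at most $\tfrac12$, so $\log X_{n+1}\le\log(1+h)+\log(n\Lam)-L/2$. This is negative once $L\ge4\log n$ and $n>2\Lam_+$.
\item If $L<4\log n$ and $p\le1/8$, then $he^{pL}\le n^{-1/2}$. So $\Psi\le4n^{-1/2}\log n=o(1)$ and $X_{n+1}\le3\xi$ for large $n$.
\item If $p>1/8$ and $\varepsilon\le1$, your constant $2^{1/p}\le2^8$ is already uniform.
\item If $\varepsilon>1$, the correctly computed bound $X_{n+1}\le(1+h)n\Lam(2/n)^{1/p}$ is uniformly small because $p\le p_*<1$. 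Your first computation there has a spurious factor $n$.
\end{itemize}

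This is the paper's dichotomy: $u=pL$ against $\log n$, then $L=O(\log n)$ against $L/\log n\to\infty$. The paper runs it as a subsequence-contradiction argument rather than with explicit constants.

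Finally, in Step~2 the rate for $q<0$ is $O(n^{-1+p_*}\log n)$, not $O(\log n/n)$, because $\max(1,z^q)$ is of size $n^{p}$ there. It is still $o(1)$, so that step and Step~3 stand.
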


\begin{proof}
Substitution of $y_n=X_n/n$ in the exact map gives, for $q\ne0$,
\begin{equation}\label{eq:X-map}
 X_{n+1}=(X_n-1)(1+h_n)^{1+1/q}
 \left[1+h_n\left(\frac{X_n-1}{n\Lam}\right)^q\right]^{-1/q},
\end{equation}
and at $q=0$,
\begin{equation}\label{eq:X-map-zero}
 X_{n+1}=(X_n-1)(1+h_n)
 \left(\frac{X_n-1}{n\Lam}\right)^{-1/(n+1)}.
\end{equation}
The latter is the removable value of the former.

We first prove the gap in \eqref{eq:terminal-gap}.  Otherwise there are
sequences $n\to\infty$, $q\in Q$, and
$\epsilon=X_n-1\downarrow0$ for which $X_{n+1}\ge1$.  Put
\[
 h=1/n,
 \qquad L=\log\!\left(\frac{n\Lam}{\epsilon}\right),
 \qquad
 \Psi(q,h,L)=
 \frac{\log(1+h)-\log(1+he^{-qL})}{q},
\]
with $\Psi(0,h,L)=hL/(1+h)$.  Then
\begin{equation}\label{eq:terminal-log-form}
 \log X_{n+1}=\log\epsilon+\log(1+h)+\Psi(q,h,L).
\end{equation}
After taking a subsequence, $q\to q_0\in Q$.  If $q_0>0$, then
$0\le\Psi\le hL$, and \eqref{eq:terminal-log-form} tends to $-\infty$.  If
$q_0<0$, write $p=-q\in(0,1)$.  When $he^{pL}\le1$, the last factor in
\eqref{eq:X-map} is bounded and $X_{n+1}\to0$; when $he^{pL}\ge1$,
\begin{equation}\label{eq:terminal-negative-bound}
 X_{n+1}\le n\Lam\left(\frac2n\right)^{1/p}\longrightarrow0,
\end{equation}
because the right-hand side is $O_{Q,\Lam_+}(n^{1-1/p})$,
$1-1/p<0$, and $p$ remains bounded away from $1$.

It remains to make the removable case uniform.  If $q\ge0$ and $q\to0$,
the integral representation
\[
 \Psi=L\int_0^1
 \frac{he^{-\theta qL}}{1+he^{-\theta qL}}\,\dd\theta
\]
again gives $\Psi\le hL$, so \eqref{eq:terminal-log-form} tends to
$-\infty$.  If $q=-p<0$, put $u=pL$.  For $u\ge\log n$,
\eqref{eq:terminal-negative-bound} applies.  For $u\le\log n$,
\begin{equation}\label{eq:terminal-zero-integral}
 \Psi=L\int_0^1
 \frac{he^{\theta u}}{1+he^{\theta u}}\,\dd\theta.
\end{equation}
If $L=O(\log n)$, then $p\to0$ gives $u=o(\log n)$ and the right-hand side
is $O(Ln^{-1+o(1)})=o(1)$.  If $L/\log n\to\infty$, the integrand in
\eqref{eq:terminal-zero-integral} is at most $1/2$, so
$\log X_{n+1}\le\log(n\Lam)-L/2+o(1)\to-\infty$.  These alternatives cover
every subsequence and contradict $X_{n+1}\ge1$.  The gap is proved.

On the compact range
$\eta_{Q,A}\le X_n-1\le A-1$ one has $L=\log n+O_{Q,A}(1)$.  The preceding
integral formulas give, uniformly in $q\in Q$,
\[
 |\Psi(q,h,L)|
 \le C_{Q,A}\bigl(n^{-1}\log n+n^{-1+p_*}\log n\bigr)=o(1),
 \qquad p_*:=\max\{0,-\inf Q\}<1.
\]
Equation \eqref{eq:terminal-log-form} therefore gives the second assertion in
\eqref{eq:terminal-gap}.  Increasing $n_A^{\mathrm{min}}$ makes
$X_n-X_{n+1}\ge1/2$ at every admissible step in the layer, which proves the
uniform bound on its length.
\end{proof}

\section{An abstract discrete phase theorem}
\label{sec:abstract}

We record the analytic mechanism in a form that separates the local
bottleneck from the discrete orbit.  The endpoints are included explicitly,
because in the Hardy application the incoming endpoint moves with the
parameter.  Let $r_0,\delta_0>0$, and let
$\beta_\pm(\delta)$ be holomorphic for $|\delta|<\delta_0$, real for real
$\delta$, with
\[
 \beta_-(\delta)<-2r_0<0<2r_0<\beta_+(\delta)
\]
for small $\delta\ge0$.  Suppose that
\[
 v_1(\delta)=\beta_-(\delta),\qquad
 v_{n+1}=S_{n,\delta}(v_n),
\]
and
\begin{equation}\label{eq:abstract-map-expansion}
 S_{n,\delta}(v)=v+\frac1n g_\delta(v)+R_n(\delta,v),
 \qquad
 |\partial_v^a\partial_\delta^bR_n|\le C_{a,b}n^{-2}.
\end{equation}
Assume explicitly that $g(\delta,v)=g_\delta(v)$ is jointly holomorphic on
a fixed complex neighbourhood of $(0,0)$, and on neighbourhoods of the fixed
compact continuation arcs used below away from any exceptional terminal
endpoint.  Every fixed mixed derivative of $g$ is bounded on smaller compact
subdomains of these charts.  The maps and remainders in
\eqref{eq:abstract-map-expansion} are holomorphic on the corresponding
charts, with the stated derivative bounds uniform in $n$ whenever the chart
is used.  A finite initial prefix is handled in its own regular charts.
The local normal form is
\[
 g_\delta(v)=a\delta+bv^2
 +O(\delta^2+\delta|v|+|v|^3),\qquad a,b>0.
\]
We assume that $g_\delta$ has no real zero on
$[\beta_-(\delta),\beta_+(\delta)]$ for $0<\delta<\delta_0$, and is bounded
away from zero on the two outer parts
$[\beta_-(\delta),-r_0]$ and $[r_0,\beta_+(\delta)]$.
At an exceptional terminal endpoint a continuous extension of $g$ and the
passage integral suffice; holomorphy in $v$ there is not assumed.  Its
contribution is controlled separately by (H5)--(H6).  In particular, the
Hardy endpoint $y=0$ is not included in the holomorphic charts for
\eqref{eq:abstract-map-expansion} or (H4).
Let $\ell_n=1+\log n$.  For $\delta>0$, define
\begin{align}
 I(\delta)&=\int_{\beta_-(\delta)}^{\beta_+(\delta)}
 \frac{\dd v}{g_\delta(v)},\label{eq:I-def}\\
 F_\delta(v)&=\int_{\beta_-(\delta)}^{v}
 \frac{\dd s}{g_\delta(s)},\label{eq:F-def}\\
 d_{n,\delta}(v)&=\log(1+1/n)
 -\int_v^{S_{n,\delta}(v)}\frac{\dd s}{g_\delta(s)}.
 \label{eq:defect}
\end{align}
Since $F_\delta(v_1(\delta))=0$, the phase defect obeys the exact telescoping
identity
\begin{equation}\label{eq:telescoping}
 \log m-F_\delta(v_m)
 =\sum_{n=1}^{m-1}d_{n,\delta}(v_n).
\end{equation}

We will repeatedly use the following elementary moving-endpoint fact.  If
$a(\delta),b(\delta)$ and $\phi(\delta,v)$ are holomorphic and $\phi$ is
holomorphic on a neighbourhood of the intervening arcs, then
\begin{equation}\label{eq:moving-endpoint-rule}
 G(\delta)=\int_{a(\delta)}^{b(\delta)}\phi(\delta,v)\,\dd v
\end{equation}
is holomorphic.  This follows by parametrizing the segment by
$v=a(\delta)+s(b(\delta)-a(\delta))$, $0\le s\le1$; differentiation gives,
for example,
\[
 G'(\delta)=b'(\delta)\phi(\delta,b(\delta))
 -a'(\delta)\phi(\delta,a(\delta))
 +\int_{a(\delta)}^{b(\delta)}\partial_\delta\phi(\delta,v)\,\dd v.
\]
Consequently movement of an endpoint through a regular holomorphic chart
where $g_\delta$ does not vanish contributes a function holomorphic in
$\delta$.  This rule is not applied at an exceptional terminal endpoint;
any parameter regularity there must be justified separately.  In the
variable $\xi=\sqrt\delta$ it changes only the regular even part of the
passage-time expansion.  It can change $J,\mu_2,\mu_4,\ldots$, but it creates
neither the pole in $\xi$ nor logarithmic terms.  Notice also that the moving
lower endpoint causes no extra term in \eqref{eq:telescoping}: it is exactly
the initial condition and hence $F_\delta(v_1)=0$ for every $\delta$.

We use the following six hypotheses; the parameter-uniform versions are
understood when a compact parameter set is present.

\begin{enumerate}
 \item[(H1)] The terminal problem selects a unique $\delta_N>0$, with
 $\delta_N\to0$ and $\delta_N(\log N)^2\asymp1$.
 \item[(H2)] The critical orbit remains on the incoming side and
 \[
  -v_n^*=\frac1{b\log n}
  +O\!\left(\frac{\log\log n}{\log^2n}\right).
 \]
 \item[(H3)] For real $\delta$ in the wedge
 $0\le\delta\ell_m^2\le\varepsilon_0$, and for
 $n\le m$, the map is analytic, the orbit satisfies
 \[
  \rho_n^{-1}\le C\ell_n^2,
  \qquad \rho_n=\delta+v_n(\delta)^2,
 \]
 and its variational propagator contracts as
 \[
  1-\frac{c_2}{n\ell_n}
  \le\partial_vS_{n,\delta}(v_n(\delta))
  \le1-\frac{c_1}{n\ell_n}.
 \]
 Here $c_2\ge c_1>0$.
 The derivative bounds in \eqref{eq:abstract-map-expansion} hold for every
 fixed pair $(a,b)$, uniformly in the same wedge.  A fixed initial prefix may
 be absorbed into the constants.
 \item[(H4)] If $\rho=\delta+v^2$ and
 $n^{-1}\le\sigma_0\rho$ for a fixed $\sigma_0>0$, then
 \begin{equation}\label{eq:H4}
  |\partial_v^a\partial_\delta^b d_{n,\delta}(v)|
  \le \frac{C_{a,b}}{n^2\rho^{1+a/2+b}}.
 \end{equation}
 \item[(H5)] From a bottleneck index $m$ satisfying the scaled-step condition
 $m^{-1}\le\sigma_0(\delta+v_m^2)$ to the exact terminal condition,
 \begin{equation}\label{eq:H5}
 \left|\bigl(\log N-I(\delta)\bigr)
 -\bigl(\log m-F_\delta(v_m)\bigr)\right|
 \le C\left(\frac1{m\delta}+\frac{\ell_m}{m}+N^{-c_*}\right).
 \end{equation}
 \item[(H6)] For $\xi=\sqrt\delta$ the continuous passage time, with the
 moving endpoints in \eqref{eq:I-def}, has in a fixed sector
 \begin{equation}\label{eq:H6}
 I(\xi^2)=\frac{c}{\xi}+J+\sum_{k=1}^M\mu_k\xi^k
 +O_M(\xi^{M+1}),\qquad c=\frac{\pi}{\sqrt{ab}},
 \end{equation}
 at every fixed order, with no $\xi^k\log\xi$ terms.
\end{enumerate}

\begin{lemma}[Discrete Fa\`a di Bruno estimate]\label{lem:discrete-jet}
Under \emph{(H3)--(H4)}, for every fixed $j\ge1$ and every
$n\le m$ at a real point of the wedge
$0\le\delta\ell_m^2\le\varepsilon_0$,
\begin{align}
 \left|\frac{\dd^jv_n}{\dd\delta^j}\right|
 &\le C_j\ell_n^{2j-1},\label{eq:jet-lemma-orbit}\\
 \left|\frac{\dd^j}{\dd\delta^j}
 d_{n,\delta}(v_n(\delta))\right|
 &\le C_jn^{-2}\ell_n^{2j+2}.
 \label{eq:jet-lemma-defect}
\end{align}
The constants may depend on $j$; no estimate uniform in the jet order is
asserted.
\end{lemma}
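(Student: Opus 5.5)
We argue by induction on the jet order $j$, differentiating the orbit recursion $v_{n+1}=S_{n,\delta}(v_n(\delta))$ with the initial datum $v_1(\delta)=\beta_-(\delta)$. Its derivatives are bounded by holomorphy, so the first prefix is absorbed into the constants.

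Write $\Phi_n^{(j)}=\dd^jv_n/\dd\delta^j$. Differentiating the recursion $j$ times and applying Fa\`a di Bruno gives
\[
 \Phi_{n+1}^{(j)}=\partial_vS_{n,\delta}(v_n)\,\Phi_n^{(j)}+\Sigma_n^{(j)},
\]
where $\Sigma_n^{(j)}$ collects $\partial_\delta^jS$ and the products $\partial_v^a\partial_\delta^bS\cdot\prod_i\Phi_n^{(k_i)}$ with $\sum k_i+b=j$, $a\ge1$, and each $k_i<j$ whenever $a=1$.

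The derivatives of $S$ come from $\frac1n g$ and $R_n$. On the orbit, (H3) gives $|v_n|\lesssim\rho_n^{1/2}\lesssim$ small, and $\partial_v^a\partial_\delta^b g$ is bounded. Hence every such term is $O(n^{-1})$. The one exception is $\partial_\delta S=\frac1n(a+O(\delta+|v|))+O(n^{-2})$, which is also $O(n^{-1})$.

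Assume inductively that $|\Phi_n^{(k)}|\le C_k\ell_n^{2k-1}$ for $k<j$. A term with $a$ factors of total order $j-b$ is then bounded by $n^{-1}\ell_n^{2(j-b)-a}$, and the worst case is $\ell_n^{2j-2}/n$, from $a=2$. Thus $|\Sigma_n^{(j)}|\le C n^{-1}\ell_n^{2j-2}$.

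The propagator satisfies $P_{k,n}=\prod_{i=k}^{n-1}\partial_vS_{i,\delta}(v_i)$, and (H3) gives $P_{k,n}\le(\ell_k/\ell_n)^{c_1'}\le1$. Discrete variation of constants then yields
\[
 |\Phi_n^{(j)}|\le C+\sum_{k<n}\frac{C\ell_k^{2j-2}}{k}\le C\ell_n^{2j-1}.
\]
The crude bound $P_{k,n}\le1$ already suffices here. For $j=1$ the same computation with source $O(1/n)$ gives $\ell_n$.

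For \eqref{eq:jet-lemma-defect} we apply Fa\`a di Bruno to $d_{n,\delta}(v_n(\delta))$. A generic term is $\partial_v^a\partial_\delta^b d\cdot\prod_{i=1}^a\Phi_n^{(k_i)}$ with $\sum k_i=j-b$. By (H4) with $\rho_n^{-1}\le C\ell_n^2$, this term is at most
\[
 n^{-2}\ell_n^{2+a+2b}\cdot\ell_n^{2(j-b)-a}=n^{-2}\ell_n^{2j+2},
\]
exactly the claimed bound. The use of (H4) requires the scaled-step condition $n^{-1}\le\sigma_0\rho_n$ along the orbit for $n\le m$. Since $\rho_n\gtrsim\ell_n^{-2}\gg n^{-1}$, this holds beyond a fixed index, and the earlier indices are absorbed into the constants.

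The main obstacle is the bookkeeping that keeps the power of $\ell_n$ sharp. In the orbit jet, terms with $a=1$ must be kept out of the source, where they would cost a logarithm each. Instead they are placed in the propagator, whose non-expansion is exactly the contraction assumption of (H3); each $\partial_v^a$ with $a\ge2$ then gains nothing but also loses nothing. I would double-check that the $\delta$-derivative $\partial_\delta(\frac1n g)$ does not lose a factor of $\rho$: it is bounded by holomorphy of $g$ in $\delta$, not by (H4). Only the defect estimate pays $\rho^{-1/2}$ per $v$-derivative and $\rho^{-1}$ per $\delta$-derivative, and these balance exactly against the orbit jets.
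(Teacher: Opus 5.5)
Your proposal is correct and follows essentially the paper's proof: isolate the principal term $\partial_vS_{n,\delta}(v_n)\,\dd^jv_n/\dd\delta^j$ in the Fa\`a di Bruno expansion, use the weight count $2(j-b)-a$ with $2b+a\ge2$ to bound the remaining forcing by $C_j\ell_n^{2j-2}/n$, and sum by discrete variation of constants; then balance (H4), via $\rho_n^{-1}\le C\ell_n^2$ and the scaled-step condition after a fixed prefix, against the orbit jets to get $n^{-2}\ell_n^{2j+2}$. Your observation that a merely bounded propagator (after that prefix) already gives $\ell_n^{2j-1}$ is right, whereas the paper carries the decay $(\ell_s/\ell_n)^{c_1}$, which only improves the constant; note also that the bound $|\partial_v^a\partial_\delta^bS_{n,\delta}|\le C/n$ comes from the standing chart and derivative assumptions on $g$ and $R_n$, not from (H3), which only bounds $\rho_n$ from below.
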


\begin{proof}
Set $u_n^{(r)}=\dd^rv_n/\dd\delta^r$ and
$D_n=\partial_vS_{n,\delta}(v_n)$.  The product estimate in (H3) is
 \begin{equation}\label{eq:log-propagator}
 \mathcal P(n,s):=\prod_{k=s}^{n-1}D_k,
 \qquad
 |\mathcal P(n,s)|\le C\left(\frac{\ell_s}{\ell_n}\right)^{c_1},
 \qquad 1\le s<n.
\end{equation}
Indeed, after one fixed prefix the bounds in (H3) give $0<D_k<1$; that prefix
is absorbed into $C$.  Take logarithms, use $\log(1-x)\le-x$, and compare
$\sum_{k=s}^{n-1}(k\ell_k)^{-1}$ with
$\log(\ell_n/\ell_s)$.

For clarity, introduce the finite partition set
\[
 \mathfrak P_j=
 \left\{(b,k_1,\ldots,k_j)\in\mathbb Z_{\ge0}^{j+1}:
 b+\sum_{r=1}^j r k_r=j\right\},
 \qquad a(k)=\sum_{r=1}^jk_r.
\]
The one-variable multivariate Fa\`a di Bruno formula reads
\begin{equation}\label{eq:faa-explicit}
 \frac{\dd^j}{\dd\delta^j}S_{n,\delta}(v_n(\delta))
 =\sum_{\mathfrak P_j}
 \frac{j!}{b!\prod_{r=1}^jk_r!(r!)^{k_r}}
 (\partial_\delta^b\partial_v^{a(k)}S_{n,\delta})(v_n)
 \prod_{r=1}^j(u_n^{(r)})^{k_r}.
\end{equation}
The unique summand containing $u_n^{(j)}$ is
$D_nu_n^{(j)}$, corresponding to $b=0,k_j=1$.  In every other summand,
\eqref{eq:abstract-map-expansion} and analyticity of $g$ give
\[
 |\partial_\delta^b\partial_v^{a(k)}S_{n,\delta}|
 \le C_{j}/n.
\]
Assume inductively that \eqref{eq:jet-lemma-orbit} holds through order
$j-1$.  The product of lower jets in such a summand is bounded by
\begin{equation}\label{eq:faa-power-count}
 C_j\ell_n^{\sum_r(2r-1)k_r}
 =C_j\ell_n^{2(j-b)-a(k)}.
\end{equation}
For every nonprincipal partition one has $2b+a(k)\ge2$: the only partition
with $b=0$ and $a(k)=1$ is the principal one already removed.  Hence the
forcing $E_{n,j}$ in
\[
 u_{n+1}^{(j)}=D_nu_n^{(j)}+E_{n,j}
\]
satisfies
\begin{equation}\label{eq:jet-forcing}
 |E_{n,j}|\le \frac{C_j}{n}\ell_n^{2j-2}.
\end{equation}
All derivatives of $v_1=\beta_-(\delta)$ are bounded.  Discrete variation of
constants, \eqref{eq:log-propagator}, and the integral test now give
\begin{align*}
 |u_n^{(j)}|
 &\le C_j+C_j\sum_{s<n}
 \left(\frac{\ell_s}{\ell_n}\right)^{c_1}
 \frac{\ell_s^{2j-2}}s\\
 &\le C_j\ell_n^{-c_1}
 \left(1+\int_1^{\ell_n}u^{2j-2+c_1}\,\dd u\right)
 \le C_j\ell_n^{2j-1}.
\end{align*}
This closes the induction and proves \eqref{eq:jet-lemma-orbit}.

By (H3), $\rho_n\ge C^{-1}\ell_n^{-2}$, and hence
$n^{-1}\le\sigma_0\rho_n$ after one fixed index.  The finitely many preceding
defect terms have bounded jets and are absorbed into the constants.  We may
therefore apply the same partition formula and (H4) to
$d_{n,\delta}(v_n(\delta))$.  For a term indexed by
$(b,k_1,\ldots,k_j)$, (H4) and
$\rho_n^{-1}\le C\ell_n^2$ give
\[
 |\partial_\delta^b\partial_v^{a(k)}d_{n,\delta}(v_n)|
 \le C_jn^{-2}\ell_n^{2+a(k)+2b}.
\]
Multiplication by \eqref{eq:faa-power-count} produces exactly
$n^{-2}\ell_n^{2j+2}$.  The set $\mathfrak P_j$ is finite for fixed $j$,
so summing its terms proves \eqref{eq:jet-lemma-defect}.
\end{proof}

\begin{theorem}[Abstract phase theorem]\label{thm:abstract}
Under \emph{(H1)--(H6)}, for every fixed $j\ge1$, every $n\le m$, and every
real $\delta\ge0$ in the wedge $\delta\ell_m^2\le\varepsilon_0$,
\begin{align}
 |\partial_\delta^jv_n(\delta)|
 &\le C_j\ell_n^{2j-1},\label{eq:orbit-jets}\\
 \left|\frac{\dd^j}{\dd\delta^j}
 d_{n,\delta}(v_n(\delta))\right|
 &\le C_j\frac{\ell_n^{2j+2}}{n^2}.
 \label{eq:defect-jets}
\end{align}
Consequently the absolutely convergent coefficients
\begin{equation}\label{eq:tau}
 \tau_j=\frac1{j!}\sum_{n=1}^{\infty}
 \left.\frac{\dd^j}{\dd\delta^j}
 d_{n,\delta}(v_n(\delta))\right|_{\delta=0}
\end{equation}
satisfy, at every fixed order $L$,
\begin{equation}\label{eq:phase-equation}
 \log N-I(\delta_N)
 =\sum_{j=0}^{L}\tau_j\delta_N^j+O_L(\delta_N^{L+1}).
\end{equation}
With
\[
 \kappa=-(J+\tau_0),\qquad X=\log N+\kappa,
\]
there is a unique Poincar\'e expansion
\begin{equation}\label{eq:abstract-expansion}
 \delta_N\sim\sum_{j\ge2}A_jX^{-j},
\end{equation}
where
\begin{equation}\label{eq:abstract-coefficients}
 A_2=c^2=\frac{\pi^2}{ab},\qquad A_3=0,
 \qquad A_4=2\mu_1c^3,
 \qquad A_5=2(\mu_2+\tau_1)c^4.
\end{equation}
\end{theorem}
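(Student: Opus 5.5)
The jet estimates \eqref{eq:orbit-jets}--\eqref{eq:defect-jets} are exactly Lemma~\ref{lem:discrete-jet}, so the work lies in three later steps. First we build the phase equation \eqref{eq:phase-equation} from the telescoping identity. Next we invert it in the variable $\xi=\sqrt{\delta_N}$. Finally we read off the coefficients.

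For the phase equation, fix $N$ and put $\delta=\delta_N$. Choose the bottleneck index $m=m(N)$ with $\ell_m\asymp\delta^{-1/2}\varepsilon_0^{1/2}$, so that $\delta\ell_m^2\le\varepsilon_0$. Take $m$ of the form $\exp(c\,\delta^{-1/2})$ with $c$ small. By (H3) and (H4) the scaled-step condition holds at $m$, so (H5) applies. Its error $1/(m\delta)+\ell_m/m+N^{-c_*}$ is then $O(e^{-c'\delta^{-1/2}})$, which is smaller than every power of $\delta$. Combining \eqref{eq:telescoping} with (H5) gives
\[
 \log N-I(\delta)=\sum_{n<m}d_{n,\delta}(v_n(\delta))+O(\delta^{L+1}).
\]
We Taylor-expand each summand in $\delta$ about $0$ to order $L$ with integral remainder. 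The remainder is controlled by \eqref{eq:defect-jets} at order $L+1$, which holds on the whole segment $[0,\delta]$ inside the wedge, giving
\[
 \delta^{L+1}\sum_{n<m}n^{-2}\ell_n^{2L+4}=O(\delta^{L+1}).
\]
The truncated jet sums $\sum_{n<m}$ differ from the full $\tau_j$ by tails of size $\sum_{n\ge m}n^{-2}\ell_n^{2j+2}=O(m^{-1}\ell_m^{2j+2})$. These tails are again beyond all orders. The only care needed is that the orbit jets at $\delta=0$ are those of the critical orbit for every $n$, which (H2)--(H3) guarantee with $\varepsilon_0$-wedge uniformity. This proves \eqref{eq:phase-equation}.

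For the inversion, substitute (H6) into \eqref{eq:phase-equation} to obtain
\[
 X:=\log N+\kappa=\frac c\xi+\sum_{k\ge1}\mu_k\xi^k+\sum_{j\ge1}\tau_j\xi^{2j}+O(\xi^{M+1}),
\]
where $\kappa=-(J+\tau_0)$. The choice of $\kappa$ kills the constant term, and this is precisely why $A_3=0$. Set $\xi=c X^{-1}(1+\eta)$ and solve by a contraction (or a formal iteration with error control). Because (H1) gives $\xi\asymp X^{-1}$, the reversion of this series produces $\xi\sim\sum_{i\ge1}e_iX^{-i}$ with $e_1=c$ and $e_2=0$. Squaring then gives \eqref{eq:abstract-expansion}, and the remainder is uniform at each fixed order. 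The expansion is unique because the coefficients of a Poincar\'e expansion in powers of $X^{-1}$ are determined by the function.

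For the coefficients, expand to first nontrivial order. From $X=c/\xi+\mu_1\xi+(\mu_2+\tau_1)\xi^2+\dots$ we get
\[
 \xi=\frac cX+\frac{\mu_1c^2}{X^3}+\frac{(\mu_2+\tau_1)c^3}{X^4}+O(X^{-5}).
\]
Squaring yields $A_2=c^2$, $A_3=0$, $A_4=2\mu_1c^3$ and $A_5=2(\mu_2+\tau_1)c^4$, and $c^2=\pi^2/(ab)$ by (H6). The main obstacle I expect is the first step. There one must pick $m(N)$ so that the wedge condition, the scaled-step hypothesis of (H5), and the superpolynomial smallness of the tails and of the (H5) error all hold simultaneously. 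One must also check that (H3) is applied only at indices inside the wedge for all $\delta'\in[0,\delta_N]$. The coarse scale in (H1) is what makes such an $m$ available.
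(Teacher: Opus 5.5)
Your proposal is correct and follows the paper's proof essentially step for step: you cut the telescoping sum at an index $m$ and apply (H5) there, Taylor-expand the finite defect sum using the jet bounds, replace the truncated sums by the series $\tau_j$, insert (H6) with $\kappa=-(J+\tau_0)$ to remove the constant term, and finish with finite-order asymptotic reversion and squaring. The only real difference is the cut index. You take $m=\exp(c\,\delta_N^{-1/2})$ at the edge of the wedge, which makes every error beyond all orders. This needs $c$ small relative to $\sqrt{\varepsilon_0}$ and to the (H1) constant so that $m<N$, and it needs (H2) together with the first orbit jet to confirm that $v_m$ lies in the incoming bottleneck. The paper instead takes the $L$-dependent $m_L=\lfloor\delta_N^{-(L+2)}\rfloor$ deep inside the wedge, which gives errors of exactly order $O(\delta_N^{L+1})$ and makes $m_L<N$ automatic for large $N$.
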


\begin{proof}
Lemma \ref{lem:discrete-jet} proves
\eqref{eq:orbit-jets}--\eqref{eq:defect-jets}.  At $\delta=0$ the latter is
summable in $n$ for $j\ge1$.  For $j=0$, (H3)--(H4) give directly
$|d_{n,0}(v_n^*)|\le Cn^{-2}\ell_n^2$ after a fixed prefix.  Hence every
series in \eqref{eq:tau}, including $\tau_0$, converges absolutely.
All derivatives created by the moving lower endpoint occur in the bounded
initial jets of $v_1=\beta_-(\delta)$; the moving contribution to $I$ is the
holomorphic function described by \eqref{eq:moving-endpoint-rule}.  Thus no
unrecorded Leibniz boundary term remains in the truncation argument.

For a fixed $L$, choose
\[
 m_L=\lfloor\delta_N^{-(L+2)}\rfloor.
\]
By (H1), $m_L<N$ and $\delta_N\ell_{m_L}^2\to0$.  The estimate
\eqref{eq:orbit-jets} with $j=1$, integrated from $0$ to
$\delta_N$, and (H2) also give
\[
 v_{m_L}(\delta_N)=v_{m_L}^*+O(\delta_N\ell_{m_L})
 =-\frac{1+o(1)}{b\log m_L}.
\]
Thus $m_L$ is on the incoming bottleneck section for all large $N$, and
$m_L^{-1}\le\sigma_0(\delta_N+v_{m_L}^2)$; in particular (H5) is applicable
at this index.  Taylor-expand the finite
sum in \eqref{eq:telescoping} through order $L$.  Formula
\eqref{eq:defect-jets} makes the total Taylor remainder
$O(\delta_N^{L+1})$.  Replacing its finite coefficients by the series
\eqref{eq:tau} costs
\[
 O_L\!\left(\sum_{j=0}^L
 \delta_N^j\frac{\ell_{m_L}^{2j+2}}{m_L}\right)
 =o(\delta_N^{L+1}).
\]
The largest terminal error in (H5) is
$(m_L\delta_N)^{-1}=O(\delta_N^{L+1})$, and $N^{-c_*}$ is smaller than every
fixed power of $\delta_N$.  This proves \eqref{eq:phase-equation}.

Insert (H6), put $\xi_N=\sqrt{\delta_N}$, and absorb the constant term into
the phase.  More generally, at every fixed order this gives
\[
 X=\frac{c}{\xi_N}+\sum_{k=1}^{M}\nu_k\xi_N^k
 +O_M(\xi_N^{M+1}),
 \qquad
 \nu_k=
 \begin{cases}
  \mu_k+\tau_{k/2},&k\ge2\text{ even},\\
  \mu_k,&k\text{ odd}.
 \end{cases}
\]
In particular,
\begin{equation}\label{eq:inverse-equation}
 X=\frac{c}{\xi_N}+\mu_1\xi_N
 +(\mu_2+\tau_1)\xi_N^2+O(\xi_N^3).
\end{equation}
Put $z=X^{-1}$.  The leading term and (H1) first give
$\xi_N=cz+O(z^3)$.  Multiplying a fixed-order relation by $z\xi_N$ rewrites it
as
\begin{equation}\label{eq:asymptotic-reversion}
 \xi_N=cz+z\sum_{k=1}^{M}\nu_k\xi_N^{k+1}
 +O_M(z\xi_N^{M+2}).
\end{equation}
This identity permits finite-order asymptotic reversion without any convergence
assumption.  For a fixed $r$, choose a polynomial
$P_r(z)=cz+\sum_{m=2}^ra_mz^m$ recursively so that
\[
 P_r-cz-z\sum_{k=1}^{r}\nu_kP_r^{k+1}=O(z^{r+1}).
\]
At degree $m$, the coefficient $a_m$ occurs only on the left; the coefficient
of degree $m$ on the right depends solely on $a_1,\ldots,a_{m-1}$.  Hence this
recursion exists and is unique.  Use the phase relation to an order larger than
$r$ and subtract the displayed identity from
\eqref{eq:asymptotic-reversion}.  Since $\xi_N,P_r=O(z)$,
\[
 |\xi_N-P_r|
 \le C_rz^2|\xi_N-P_r|+O_r(z^{r+1}).
\]
The first term is absorbed for small $z$, uniformly on a compact parameter set,
so $\xi_N-P_r=O_r(z^{r+1})$.  Thus $\xi_N$ has a unique fixed-order Poincar\'e
expansion.  Its first terms are
\[
 \xi_N=cz+\mu_1c^2z^3+(\mu_2+\tau_1)c^3z^4+O(z^5).
\]
Squaring gives
\eqref{eq:abstract-expansion}--\eqref{eq:abstract-coefficients}.  In
particular, the phase normalization removes the cubic term.  The coefficient
$A_4$ uses only local continuous-flow data, whereas $A_5$ is the first coefficient
that reads the global discrete response $\tau_1$.
Moreover, replacing $\kappa$ by $\kappa+s$ changes the coefficient of
$X^{-3}$ by $2sA_2$.  Since $A_2=c^2>0$, the condition $A_3=0$ fixes the phase
uniquely.
\end{proof}

\section{Verification for the power-mean family}
\label{sec:verification}

We now verify the hypotheses uniformly on $Q\Subset(-1,\infty)$.  Lemma
\ref{lem:capture} provides (H1)--(H2).  The remaining ingredients are stated
in a form that also records the parameter dependence required by Theorem
\ref{thm:main}.

In the coordinate $v=y_*(q)-y$, the abstract endpoints are explicitly
\begin{equation}\label{eq:hardy-moving-endpoints}
 \beta_-(q,\delta)=y_*(q)-\Lstar(q)+\delta,
 \qquad
 \beta_+(q,\delta)=y_*(q).
\end{equation}
The first one is the initial value $v_1$, while the second corresponds to
$y=0$.  Consequently
\[
 \int_{\beta_-(q,\delta)}^{\beta_+(q,\delta)}
 \frac{\dd v}{g_{q,\delta}(v)}
 =\int_0^{\Lstar(q)-\delta}
 \frac{\dd y}{B_{q,\Lstar(q)-\delta}(y)}.
\]
Since $B_{q,\Lam}(\Lam)=1$, the moving incoming endpoint stays uniformly
regular on compact $q$-sets, so the moving-boundary rule
\eqref{eq:moving-endpoint-rule} applies there.  At the fixed endpoint $y=0$
we do not assume holomorphy in $y$; the terminal transfer and parameter
regularity of its integral are proved separately in Propositions
\ref{prop:terminal} and \ref{prop:nolog}.

\begin{proposition}[Uniform analytic wedge and phase jets]\label{prop:wedge}
For every $Q\Subset(-1,\infty)$, the translated exact map associated with
\eqref{eq:exact-map} has a common holomorphic chart near the bottleneck in
$(q,\delta,v,1/n)$.  It satisfies (H3) on the real incoming wedge and (H4)
at real centres in the analytic passage region, with the complex extensions
specified in the proof.  For real $q\in Q$, real
$0\le\delta\ell_n^2\le\varepsilon_0$, and every fixed $j,m\ge0$,
\begin{equation}\label{eq:q-orbit-jets}
 |\partial_q^m v_n(q,0)|\le \frac{C_{Q,m}}{\ell_n},
 \qquad
 |\partial_q^m\partial_\delta^jv_n(q,\delta)|
 \le C_{Q,j,m}\ell_n^{2j-1}\quad(j\ge1),
\end{equation}
and
\begin{equation}\label{eq:q-defect-jets}
 \left|\partial_q^m\partial_\delta^j
 d_{n,q,\delta}(v_n(q,\delta))\right|
 \le C_{Q,j,m}\frac{\ell_n^{2j+2}}{n^2}.
\end{equation}
Hence every phase coefficient $\tau_j(q)$ is holomorphic in a complex
neighbourhood of $Q$ and real analytic on $Q$.
\end{proposition}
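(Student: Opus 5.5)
The plan is to treat $q$ as one more holomorphic parameter in the abstract machinery of Section~\ref{sec:abstract}. I will establish three things in turn: a common holomorphic chart, hypotheses (H3)--(H4) in that chart, and the joint $(q,\delta)$ jet estimates. Holomorphy of $\tau_j$ then follows by uniform convergence.

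\emph{Step 1: the chart.} Theorem~\ref{thm:shooting} gives joint real analyticity of $T_{h,q,\Lam}(y)$ across $q=0$, with holomorphic local complexification, whenever $\Lam>0$, $h>0$ and $y>h$. I will rewrite \eqref{eq:exact-map} through the exponential and integral representation used for $\log_q$ in Lemma~\ref{lem:double-root}. This shows that $(q,\Lam,y,h)\mapsto T$ extends holomorphically to a polydisc around $Q\times\{\Lstar\}\times\{y_*\}\times\{0\}$, including $h=0$. Substituting $\Lam=\Lstar(q)-\delta$ and $y=y_*(q)-v$ gives $S_{n,q,\delta}(v)=v+h\,g_{q,\delta}(v)+h^2\widetilde R(q,\delta,v,h)$ with $\widetilde R$ holomorphic. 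Cauchy estimates on a slightly smaller polydisc then give \eqref{eq:abstract-map-expansion} with constants uniform in $q$, including all mixed $\partial_q$ derivatives. On the fixed compact continuation arcs away from $y=0$ the same argument applies, since there $y$ stays bounded away from $0$ and $h\to0$.

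\emph{Step 2: (H3) and (H4).} For (H3), Lemma~\ref{lem:capture} gives $-v_n^*\asymp_Q\ell_n^{-1}$. I will integrate $|\partial_\delta v_n|$, bootstrapped with \eqref{eq:jet-lemma-orbit} at $j=1$, and conclude that $v_n(\delta)=v_n^*+O(\delta\ell_n)$ on the wedge $\delta\ell_m^2\le\varepsilon_0$. With $\varepsilon_0$ small this gives $\rho_n\ge c\ell_n^{-2}$ and $v_n<0$ of size $\asymp\ell_n^{-1}$. The derivative is $\partial_vS=1+n^{-1}\partial_vg+O(n^{-2})$, and the normal form \eqref{eq:normal-form} gives $\partial_vg=2bv+O(\delta+v^2)$. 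Hence $\partial_vS=1-2b|v_n|/n+O(\ell_n^{-2}/n)$, which yields the two-sided contraction bound with $c_1,c_2$ depending only on $Q$. For (H4), I write the defect as $\int_v^{S(v)}(1/g(v)-1/g(s))\,\dd s$ plus $O(n^{-2}/\rho)$. Since $S(v)-v=g(v)/n+O(n^{-2})$, Taylor expansion bounds the defect by $n^{-2}(|g'|/g+1/\rho)\lesssim n^{-2}\rho^{-1}$. For the derivatives, I complexify $v$ and $\delta$ on polydiscs of radii $\asymp\sqrt\rho$ and $\asymp\rho$; the scaled-step condition $n^{-1}\le\sigma_0\rho$ keeps $g$ comparable to $\rho$ on them. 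Cauchy's estimates then produce the weights $\rho^{-a/2-b}$. Adding a $q$-polydisc of fixed radius costs nothing in these weights.

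\emph{Step 3: joint jets.} I will rerun the proof of Lemma~\ref{lem:discrete-jet} with mixed derivatives $\partial_q^m\partial_\delta^j$, inducting on $(m,j)$ in lexicographic order. The principal term is again $D_n\,\partial_q^m\partial_\delta^jv_n$. Each $q$-derivative falling on $S$ or $d$ costs only a bounded factor, and $\partial_q$ of $v_n$ gains no powers of $\ell_n$. This reproduces \eqref{eq:q-orbit-jets} for $j\ge1$. The case $j=0$ needs separate treatment: I bound $\partial_q^mv_n(q,0)$ by $C/\ell_n$ using the forcing $\partial_q g=O(v^2)$ at the double root, i.e.\ $O(\ell_n^{-2}/n)$, together with propagator decay. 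This matters because the normalized coordinates put the double root at $v=0$ for every $q$. The defect estimate \eqref{eq:q-defect-jets} then follows exactly as \eqref{eq:jet-lemma-defect}.

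The same bounds hold for complex $q$ near $Q$, because the whole induction runs in the holomorphic chart as long as real parts stay in the wedge. The series for $\tau_j(q)$ therefore converge locally uniformly on a complex neighbourhood, which gives holomorphy and real analyticity. I expect the main obstacle to be the $j=0$ estimate $\partial_q^mv_n(q,0)=O(\ell_n^{-1})$ uniformly for complex $q$. There the critical orbit is only controlled for real $q$ by Lemma~\ref{lem:capture}. I would handle it with a perturbative Riccati barrier: compare the complex orbit with the real one at $\operatorname{Re}q$, and use the contraction factor $1-c/(n\ell_n)$ to keep the difference $O(|\operatorname{Im}q|/\ell_n)$.
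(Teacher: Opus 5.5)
Your outline has the same architecture as the paper's proof: the removable $\Theta$-chart with Taylor expansion in $h$, a wedge bootstrap, scaled polydiscs of radii $\sqrt\rho$, $\rho$ and a fixed $q$-radius for (H4), a lexicographic mixed Fa\`a di Bruno induction, comparison of complex-$q$ critical orbits with real ones, and Weierstrass. However, two quantitative inputs are missing, and without them neither the $j=0$ bound in \eqref{eq:q-orbit-jets} nor the complex-$q$ extension follows.

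The first missing input is the rate of the variational propagator. For $j=0$ the forcing is $O(1/(n\ell_n^2))$. Variation of constants with $|\prod_{k=s}^{n-1}D_k|\le C(\ell_s/\ell_n)^{\gamma}$ gives $\ell_n^{-\gamma}\int_1^{\ell_n}u^{\gamma-2}\dd u$. This is $O(\ell_n^{-1})$ only when $\gamma>1$; for $\gamma<1$ the bound is only $O(\ell_n^{-\gamma})$. The same threshold governs your comparison of the complex orbit with the real one at $\operatorname{Re}q$. With exponent below $1$, the difference bound $O(|\operatorname{Im}q|\,\ell_n^{-\gamma})$ eventually swamps $|v_n|\asymp\ell_n^{-1}$, so the first-failure argument cannot close on any fixed tube. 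Your Step 2 extracts only some $c_1>0$ from $-v_n^*\asymp\ell_n^{-1}$. That is all (H3) needs, but it is not enough here. What you need is the sharp constant in Lemma~\ref{lem:capture}: $-v_n^*=1/(b\log n)+O(\log\log n/\log^2n)$. This gives $D_n^*=1-2/(n\ell_n)+O(\log\ell_n/(n\ell_n^2)+n^{-2})$, so you can fix $\gamma\in(1,2)$. You then keep the exponent above $1$ under the wedge perturbation (by shrinking $\varepsilon_0$) and under small $\operatorname{Im}q$, which is exactly what the paper does.

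The second missing input concerns new non-principal terms in the mixed induction that have no analogue in Lemma~\ref{lem:discrete-jet}. These are $\partial_q^r\partial_vS\cdot\partial_q^{m-r}\partial_\delta^jv_n$ with $r\ge1$. Suppose a $q$-derivative only costs a bounded factor relative to the unscaled bound $C/n$. Then this forcing is $\ell_n^{2j-1}/n$, so each $q$-derivative loses a power of $\ell_n$, and for $j=0$, $m\ge2$ the decay $\ell_n^{-1}$ is lost entirely. What is needed is $\partial_q^r\partial_vg_{q,\delta}(v)=O(|v|+\delta)$, hence $\partial_q^r\partial_vS=O(1/(n\ell_n)+n^{-2})$ along the wedge. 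In other words, the moving coordinate kills the $q$-derivatives of the linear coefficient as well as the constant one; this is the paper's \eqref{eq:q-flatness} and \eqref{eq:mixed-map-one}. It does follow from your fixed-radius $q$-polydisc remark applied to $S-v$ on the scaled chart, but it must be stated, since your $j=0$ argument only invokes flatness of $g$ itself.

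A smaller point: using \eqref{eq:jet-lemma-orbit} to establish (H3) has to be run as a continuity argument in $\delta$, because that lemma presupposes (H3). The paper instead bootstraps $v_n(q,\delta)-v_n(q,0)$ in $n$. That route also covers complex $\delta$ and supports its Cauchy-in-$\delta$ proof that $\tau_j$ is holomorphic. Your alternative, running the jet induction at complex $q$ and $\delta=0$, works once the two points above are in place.
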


\begin{proof}
We give the analytic-domain, wedge, defect, and mixed-jet arguments
separately.

\emph{Step 1: a common holomorphic chart.}
With $h$ in place of $1/n$ and $L=\log((y-h)/\Lam)$, the nonalgebraic factor
in \eqref{eq:exact-map} is
\[
 \exp\Theta(q,L,h),
 \qquad
 \Theta(q,L,h)=
 \frac{\log(1+h)-\log(1+he^{qL})}{q}.
\]
The identity
\begin{equation}\label{eq:theta-removable}
 \Theta(q,L,h)
 =-L\int_0^1\frac{he^{\theta qL}}{1+he^{\theta qL}}\,\dd\theta
\end{equation}
removes $q=0$ and fixes all logarithm branches near the positive real data.
Write
\[
 \Lstar(q)=\exp\left(\frac{1+q}{q}\log(1+q)\right).
\]
Because $Q$ has positive distance from $-1$, the principal logarithm and
\eqref{eq:theta-removable} are holomorphic on one complex tube
$\mathcal U_Q$ around $Q$.  After fixing small radii in $(\delta,v,h)$, the
quantities $y_*(q)-v-h$ and $\Lstar(q)-\delta$ stay in compact subsets of the
same logarithm domain.  Hence
\[
 \mathcal S(q,\delta,v,h)
 =y_*(q)-T_{h,q,\Lstar(q)-\delta}(y_*(q)-v)
\]
is jointly holomorphic on a common polydisc.  Taylor's formula in $h$ gives
\begin{equation}\label{eq:exact-map-remainder}
 \mathcal S(q,\delta,v,h)
 =v+h g_{q,\delta}(v)+h^2\mathcal R(q,\delta,v,h),
\end{equation}
where every fixed mixed derivative of $\mathcal R$ is bounded uniformly on a
smaller common polydisc.  This proves the analytic remainder assertion in
(H3), including its parameter-uniform meaning.

\emph{Step 2: real and complex wedge bootstrap.}
On the critical orbit, Lemma \ref{lem:capture} and
\eqref{eq:normal-form} give
\begin{equation}\label{eq:critical-derivative}
 D_n^*:=\partial_v\mathcal S(q,0,v_n^*,1/n)
 =1-\frac{2}{n\ell_n}
 +O_Q\!\left(\frac{\log\ell_n}{n\ell_n^2}+\frac1{n^2}\right).
\end{equation}
Fix $1<\gamma<2$.  After one uniform initial index,
\begin{equation}\label{eq:complex-propagator}
 \left|\prod_{k=s}^{n-1}D_k^*\right|
 \le C_Q\left(\frac{\ell_s}{\ell_n}\right)^\gamma,
 \qquad n>s,
\end{equation}
and the real derivatives satisfy the two-sided contraction in (H3).

These bounds persist on a genuine complex $q$-tube.  To see this without an
implicit compactness assumption, fix $q_0\in Q$ and compare the critical
recurrences at $q$ and $q_0$.  In the moving coordinate,
\begin{equation}\label{eq:q-flatness}
 \partial_q^r g_{q,0}(v)=O_{Q,r}(v^2),
 \qquad
 \partial_q^r\partial_vg_{q,0}(v)=O_{Q,r}(|v|)\qquad(r\ge1),
\end{equation}
because the constant and linear critical coefficients vanish identically in
$q$.  Thus the difference equation has forcing
$O_Q(|q-q_0|/(n\ell_n^2)+|q-q_0|/n^2)$.  Variation of constants with
\eqref{eq:complex-propagator} gives
\[
 |v_n(q,0)-v_n(q_0,0)|
 \le C_Q|q-q_0|/\ell_n.
\]
A first-failure argument justifies the estimate on a sufficiently small disc
about $q_0$; finitely many such discs cover $Q$.  This already gives
$|v_n(q,0)|\asymp_Q\ell_n^{-1}$ on a common complex tube.  Subtracting the
leading term in the complex critical Riccati recurrence and repeating the
reciprocal summation from Lemma \ref{lem:capture} then gives
\[
 v_n(q,0)=-\frac1{b(q)\ell_n}
 +O_Q\left(\frac{\log\ell_n}{\ell_n^2}\right)
\]
there as well.

Now put $p_n=v_n(q,\delta)-v_n(q,0)$.  Expansion of
\eqref{eq:exact-map-remainder} about the critical orbit yields
\begin{equation}\label{eq:wedge-perturbation}
 |p_{n+1}-D_n^*p_n|
 \le \frac{C_Q}{n}
 \bigl(|\delta|+|p_n|^2+|\delta p_n|+|\delta|^2\bigr)
 +\frac{C_Q}{n^2}(|p_n|+|\delta|).
\end{equation}
The initial displacement is $O_Q(\delta)$.  Assuming
$|p_k|\le M|\delta|\ell_k$ up to a first possible failure, using
$|\delta|\ell_m^2\le\varepsilon_0$, and applying
\eqref{eq:complex-propagator} gives
\[
 |p_n|\le C_Q(1+M^2\varepsilon_0)|\delta|\ell_n,
 \qquad n\le m.
\]
Choose $M$ first and then $\varepsilon_0$ sufficiently small.  The strict
bootstrap improvement closes the estimate for real or complex $\delta$.
After shrinking the common tube once more,
\begin{equation}\label{eq:wedge-separation}
 |v_n(q,\delta)|\asymp_Q\ell_n^{-1},
 \qquad
 \widehat\rho_n:=|\delta|+|v_n(q,\delta)|^2\asymp_Q\ell_n^{-2}
 \quad(n\le m).
\end{equation}
The corresponding variational coefficient differs from $D_n^*$ by at most
$C_Q\varepsilon_0/(n\ell_n)+C_Q/n^2$.  After decreasing
$\varepsilon_0$, the complex product estimate
\eqref{eq:complex-propagator} therefore remains valid along the whole wedge,
with a possibly smaller exponent still larger than $1$.
For real $q$ and $\delta$, differentiating
\eqref{eq:exact-map-remainder} and using
\eqref{eq:wedge-separation} preserves both inequalities in the contraction
part of (H3).

\emph{Step 3: a zero-free scaled polydisc and (H4).}
We prove the scaled estimate at every real centre in the bottleneck, not only
at the incoming wedge points from Step~2.  After fixing a sufficiently small
real neighbourhood, the normal form and the positivity of $a(q),b(q)$ give,
uniformly for $q\in Q$, $\delta\ge0$, and
$\rho:=\delta+v^2>0$,
\begin{equation}\label{eq:full-bottleneck-field}
 c_Q\rho\le g_{q,\delta}(v)\le C_Q\rho,
 \qquad
 |\partial_vg_{q,\delta}(v)|\le C_Q\sqrt\rho,
 \qquad
 |\partial_\delta g_{q,\delta}(v)|\le C_Q.
\end{equation}
Moreover, for every fixed $r\ge1$,
\begin{equation}\label{eq:full-bottleneck-q-flatness}
 |\partial_q^r g_{q,\delta}(v)|\le C_{Q,r}\rho.
\end{equation}
Indeed, at $\delta=0$ the constant and linear terms vanish identically in
$q$, while differentiation of the coefficients of $\delta$ and $v^2$ in
\eqref{eq:normal-form} preserves their orders.  The error terms are absorbed
by decreasing the real neighbourhood once.

We next complexify these bounds about an arbitrary such real centre.  Choose
$c_0>0$ smaller than the width of the common $q$-tube and consider
\begin{equation}\label{eq:defect-polydisc}
 |z-v|\le c_0\sqrt\rho,\qquad
 |\delta'-\delta|\le c_0\rho,\qquad
 |q'-q|\le c_0
\end{equation}
and a concentric polydisc with all three radii enlarged by a fixed factor.
For points in the enlarged polydisc one has
$|z|\le C_Q\sqrt\rho$ and $|\delta'|\le C_Q\rho$.  Analytic Taylor estimates,
\eqref{eq:full-bottleneck-field}, and
\eqref{eq:full-bottleneck-q-flatness} therefore give
\[
 |g_{q',\delta'}(z)-g_{q,\delta}(v)|\le C_Qc_0\rho.
\]
Taking $c_0$ sufficiently small yields
\begin{equation}\label{eq:full-bottleneck-zero-free}
 |g_{q',\delta'}(z)|\ge c'_Q\rho
\end{equation}
throughout the smaller polydisc.  This includes the centre $v=0$, $\delta>0$.
The $q'$-radius is independent of $\rho$ precisely because
$\partial_q^r g=O_Q(\delta+v^2)$ in the moving critical coordinate; only the
transverse parameter $\delta'$ has radius $O(\rho)$.  When $\delta=0$, the
choice of $c_0$ also keeps the $z$-disc a fixed relative distance from the
double zero at $z=0$ whenever its real centre is nonzero.

Now let $h\le\sigma_0\rho$, with $\sigma_0$ decreased if necessary.
Equation \eqref{eq:exact-map-remainder} and $h^2\le\sigma_0h\rho$ give
\[
 \Delta:=\mathcal S(q,\delta,v,h)-v
 =hg_{q,\delta}(v)+O_Q(h^2),
 \qquad |\Delta|\le C_Qh\rho.
\]
The same estimates hold at complex points in a smaller polydisc, and the
segment joining the point to its image remains inside the zero-free chart.
There $1/g$ has a single-valued holomorphic primitive, so
\[
 d_{h,q,\delta}(v)=\log(1+h)
 -\Delta\int_0^1\frac{\dd\theta}
 {g_{q,\delta}(v+\theta\Delta)}
\]
there.  Uniform boundedness of the analytic remainder gives
\[
 \frac{\Delta}{g_{q,\delta}(v)}
 =h+O_Q\!\left(\frac{h^2}{\rho}\right).
\]
Moreover, the complex versions of \eqref{eq:full-bottleneck-field} and
\eqref{eq:full-bottleneck-zero-free} imply
\[
 \left|
 \Delta\int_0^1\frac{\dd\theta}
 {g_{q,\delta}(v+\theta\Delta)}
 -\frac{\Delta}{g_{q,\delta}(v)}
 \right|
 \le C_Q|\Delta|^2\frac{\sqrt\rho}{\rho^2}
 \le C_Qh^2\sqrt\rho.
\]
Together with \(\log(1+h)=h+O(h^2)\), this exhibits the first-order
cancellation and proves
\[
 |d_{h,q,\delta}(v)|\le C_Qh^2/\rho
\]
throughout that smaller polydisc.
Cauchy's estimates on the nested polydiscs yield, for every fixed $r,a,b$,
\begin{equation}\label{eq:full-defect-cauchy}
 |\partial_q^r\partial_v^a\partial_\delta^b
 d_{h,q,\delta}(v)|
 \le C_{Q,r,a,b}\frac{h^2}{\rho^{1+a/2+b}}.
\end{equation}
On each fixed compact continuation outside the local bottleneck and still
away from the small-$y$ terminal endpoint, $g$ is bounded away from zero and
the same estimate follows directly by compactness (there $\rho$ is bounded
above and below).  Thus \eqref{eq:full-defect-cauchy} proves (H4) throughout
the analytic passage region where it is invoked, including the inner and
outgoing bottleneck, and also records the uniform $q$-derivatives used below.
The separate small-$y$ terminal estimate is proved in Proposition
\ref{prop:terminal}.  Along the incoming wedge the scaled-step condition
follows, after a fixed prefix, from \eqref{eq:wedge-separation}.

\emph{Step 4: mixed Fa\`a di Bruno induction.}
Let
\[
 U_n^{j,m}=\partial_\delta^j\partial_q^m v_n(q,\delta),
 \qquad \omega_j=2j-1\quad(j\ge0).
\]
We prove $|U_n^{j,m}|\le C_{Q,j,m}\ell_n^{\omega_j}$ lexicographically in
$j$ and then $m$.  Besides the principal coefficient
$D_n=\partial_v\mathcal S(q,\delta,v_n,1/n)$, the normal form and
\eqref{eq:exact-map-remainder} give along the wedge, for $r\ge1$,
\begin{align}
 |\partial_q^r\mathcal S|&\le
 C_{Q,r}\left(\frac1{n\ell_n^2}+\frac1{n^2}\right),
 \label{eq:mixed-map-zero}\\
 |\partial_q^r\partial_v\mathcal S|&\le
 C_{Q,r}\left(\frac1{n\ell_n}+\frac1{n^2}\right),
 \label{eq:mixed-map-one}\\
 |\partial_q^r\partial_\delta^b\partial_v^a\mathcal S|
 &\le C_{Q,r,a,b}/n\qquad(2b+a\ge2).
 \label{eq:mixed-map-higher}
\end{align}
The estimates include $r=0$ in \eqref{eq:mixed-map-higher}.

Apply the finite multivariate partition formula to the recurrence.  A term
with $b$ explicit $\delta$-derivatives on the map and $a$ differentiated
orbit factors has $\delta$-orders $j_1,\ldots,j_a$ satisfying
$b+\sum j_i=j$.  The induction hypothesis assigns their product the exact
logarithmic weight
\begin{equation}\label{eq:mixed-power-count}
 \sum_{i=1}^a(2j_i-1)=2(j-b)-a.
\end{equation}
If $2b+a\ge2$, \eqref{eq:mixed-map-higher} makes the resulting forcing at
most $C\ell_n^{2j-2}/n$.  If $b=0,a=1$ but the term is not principal, at
least one $q$-derivative hits the coefficient or a lower-$m$ factor;
\eqref{eq:mixed-map-one} gives the same bound.  The remaining case
$a=b=0$ can occur only for a pure $q$-derivative and is bounded by
\eqref{eq:mixed-map-zero}.  Therefore
\[
 U_{n+1}^{j,m}=D_nU_n^{j,m}+E_n^{j,m},
 \qquad
 |E_n^{j,m}|\le
 \begin{cases}
 C_{Q,m}/(n\ell_n^2),&j=0,\\
 C_{Q,j,m}\ell_n^{2j-2}/n,&j\ge1.
 \end{cases}
\]
All initial jets are bounded.  Variation of constants with exponent
$\gamma>1$ in \eqref{eq:complex-propagator} gives respectively
$O_Q(\ell_n^{-1})$ and $O_Q(\ell_n^{2j-1})$.  This closes the induction and
proves \eqref{eq:q-orbit-jets}; in particular, no $q$-derivative costs an
extra power of $\ell_n$.

Finally apply the same multivariate partition formula to
$d_{n,q,\delta}(v_n)$.  From \eqref{eq:full-defect-cauchy} and
\eqref{eq:wedge-separation}, a term with indices $(a,b)$ contributes
$n^{-2}\ell_n^{2+a+2b}$.  Multiplication by
\eqref{eq:mixed-power-count} gives exactly
$n^{-2}\ell_n^{2j+2}$, independently of how the $q$-derivatives are
distributed.  This proves \eqref{eq:q-defect-jets}.

To justify normal convergence in the complex parameter, fix a smaller open
tube $\mathcal V_Q$ with
$Q\subset\mathcal V_Q\Subset\mathcal U_Q$, on which the complex bootstrap
of Step~2 and the nested charts of Step~3 hold, including the finite initial
prefix.  For every compact $K\Subset\mathcal V_Q$, these estimates give
$\varepsilon_K,C_K>0$, independent of $n$, such that
$d_{n,q,\delta}(v_n(q,\delta))$ is holomorphic for $q$ near $K$ and
$|\delta|<2\varepsilon_K\ell_n^{-2}$, and
\[
 \sup_{\substack{q\in K\\ |\delta|\le\varepsilon_K\ell_n^{-2}}}
 |d_{n,q,\delta}(v_n(q,\delta))|
 \le C_K n^{-2}\ell_n^2.
\]
Indeed, the complex orbit stays within a fixed small relative distance of
the real incoming centre in the scaled charts of Step~3; shrinking the tube
and $\varepsilon_K$ ensures this simultaneously for all large $n$.
The finitely many earlier steps lie in regular charts and are absorbed into
$C_K$.  Cauchy's estimate in the $\delta$-disc therefore yields, for each
fixed $j\ge0$,
\begin{equation}\label{eq:complex-normal-majorant}
 \sup_{q\in K}\left|
 \left.\frac{\dd^j}{\dd\delta^j}
 d_{n,q,\delta}(v_n(q,\delta))\right|_{\delta=0}\right|
 \le C_{K,j}n^{-2}\ell_n^{2j+2}.
\end{equation}
The right-hand side is summable in $n$.  Thus the holomorphic summands
defining $\tau_j$ converge uniformly on every such $K$, and the Weierstrass
theorem makes $\tau_j$ holomorphic on $\mathcal V_Q$.
\end{proof}
\medskip
\begin{proposition}[Uniform terminal transfer]\label{prop:terminal}
The recurrences \eqref{eq:X-map}--\eqref{eq:X-map-zero} are jointly analytic
in $q$ through $q=0$.  For every fixed $A>1$, an admissible orbit has only
$O_{Q,A}(1)$ indices in the terminal layer $1<X_n<A$.  From any bottleneck
index $m$ satisfying the scaled-step condition in (H5), the terminal
comparison holds uniformly for $q\in Q$; in fact one may take $c_*=1$ there.
\end{proposition}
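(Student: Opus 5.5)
The plan has three parts: analyticity of the terminal recurrences, the bound on the terminal layer, and the terminal comparison (H5). The first two are short. The $X$-recurrences are the exact map after the substitution $y_n=X_n/n$. So joint analyticity through $q=0$ follows from the removable representation \eqref{eq:theta-removable} of $\Theta$, applied with $L=\log((X_n-1)/(n\Lam))$ on the region $X_n>1$, exactly as in Step~1 of Proposition~\ref{prop:wedge}. The bound of $O_{Q,A}(1)$ indices in $1<X_n<A$ is the final assertion of Lemma~\ref{lem:terminal-layer}, applied with $\Lam$ in a compact range containing $\lambda_N(q)$. That range exists by Lemma~\ref{lem:capture}, which gives $\lambda_N\uparrow\Lstar$ uniformly on $Q$.

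For (H5) I would split the orbit after the bottleneck index $m$ into three pieces. The first is the rest of the analytic passage region, up to a fixed outgoing section $y=y_0$ with $y_0$ small. The second is a small-$y$ region $A/n\le y\le y_0$. The third is the terminal layer $1<X_n<A$.

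On the first piece I would sum the telescoping identity \eqref{eq:telescoping} from $m$ onward. The bound (H4) applies along the whole passage once the scaled-step condition holds at $m$, and that condition persists by Lemma~\ref{lem:scaled-step}. The defects are at most $C n^{-2}\rho_n^{-1}$. Using $(n+1)\rho_{n+1}\ge n\rho_n$, this gives $\rho_n^{-1}\le n\rho_m^{-1}/m$, so each defect is at most $C\rho_m^{-1}/(nm)$. I would not sum this crude bound over the whole passage. Instead I would use the harmonic-time shell bound \eqref{eq:shell-time}, together with $\rho\ge\delta$ in the inner layer, to get a total of $O(1/(m\delta)+\ell_m/m)$. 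The outer regular part contributes $O(1/m)$.

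On the small-$y$ piece I would compare the exact map with $\dot y=-B(y)$, where $B(y)=1+O(y\log y)$ near $y=0$ with the $q$-logarithm interpreted uniformly. The defect per step is $O(n^{-2}y^{-1})$ by \eqref{eq:map-field}, and summing gives $O(1/A)$ relative to the time scale. The comparison must be sharper than this, since (H5) demands a remainder that is a negative power of $N$. Here I use that $y\asymp 1/n$ only for $n\asymp N$, which happens because $B\approx1$ forces the decay $y_n\approx (N-n)/n$ near the end.

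In fact I would do the final piece exactly. With $B\equiv1+o(1)$ and $y_n\approx N/n-1$, the remaining continuous time $\int_0^{y}\dd s/B(s)$ equals the discrete time $\log(N/n)$ up to $O(y\log y\cdot y)+O(1/n)$. Evaluated at the point where the orbit enters the small-$y$ region, this error is $O(N^{-1}\log N)$, which is $O(N^{-c_*})$ for any $c_*<1$. Pushing the expansion one order further in $X_n/N$ should give $c_*=1$. The terminal layer costs $O(1)$ steps, each of harmonic length $O(1/N)$, and its continuous time $\int_0^{A/N}\dd s/B$ is also $O(A/N)$. So the layer contributes $O_{A}(N^{-1})$, and $A$ can be fixed.

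The main obstacle is the middle piece. Estimate \eqref{eq:map-field} only holds when $1/n\le\sigma y$, and near $y\asymp A/n$ the error $O(1/(n^2y))$ is of the same order as the step itself. I would handle this in the $X$ variable. There the exact recurrence \eqref{eq:X-map} reads $X_{n+1}=X_n-1+O(X_n\log(n)/n)$, uniformly in $q\in Q$ including the branch $q<0$ via the integral form \eqref{eq:terminal-zero-integral}. Matching this against the continuous phase $\log(N/n)-\int_0^{X/n}\dd s/B$ gives a per-step error of $O(\log n/n^{2})$ plus $O(1/X_n^2)$-type terms. The latter sum to $O(1/A)$, which would have to be removed by letting $A$ grow slowly with $N$. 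Making that choice uniform in $q$, through the removable point $q=0$, is where I expect the real work.
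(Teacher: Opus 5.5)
The architecture matches the paper's: the same telescoped one-step defects $\epsilon_n=\log(1+1/n)-\int_{y_{n+1}}^{y_n}\dd y/B_{q,\Lam}(y)$, the same three regions, and the same handling of analyticity and of the layer count via Lemma~\ref{lem:terminal-layer}. The bottleneck part is fine, though the shell argument is unnecessary: $\rho_n\ge\delta$ gives $|\epsilon_n|\le C_Q/(n^2\delta)$, and $\sum_{n\ge m}n^{-2}\le 2/m$ directly.

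The genuine gap is the small-$y$ region. You leave it unresolved, and the difficulty comes from misreading (H5). You say (H5) demands a remainder that is a negative power of $N$. In fact \eqref{eq:H5} explicitly allows the error $C\ell_m/m$, and every index in this region is $\ge m$. Your $O(1/A)$ comes from bounding $n^{-2}y_n^{-1}\le 1/(An)$ uniformly and summing over harmonic time. Your proposed cure, matching in $X$ with errors of size $1/X_n^2$ and letting $A$ grow with $N$, is neither carried out nor needed. As a result, $c_*=1$ is only conjectured in your text.

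The missing step is a lower barrier for $y_n$ obtained from the drift itself.
\begin{itemize}
\item Fix $A=A_Q$ once, large enough that the remainder in \eqref{eq:intermediate-drift} is at most $1/(4n)$ whenever $X_n>A$. Together with \eqref{eq:small-y-field}, this gives $y_n-y_{n+1}\asymp_Q 1/n$.
\item Let $n_A^{\mathrm{hit}}$ be the first index with $X\le A$. Summing backwards from it and using $y_{n_A^{\mathrm{hit}}}\ge 1/n_A^{\mathrm{hit}}$ gives $y_n\ge c_Q\bigl(\log(n_A^{\mathrm{hit}}/n)+1/n_A^{\mathrm{hit}}\bigr)$.
\item Use the per-step bound $|\epsilon_n|\le C_Q n^{-2}(1+y_n^{-1})$ from \eqref{eq:intermediate-defect}. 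For $n\le n_A^{\mathrm{hit}}/2$ one has $y_n\gtrsim 1$, so these indices contribute $O(1/m)$.
\item On the final half, write $j=n_A^{\mathrm{hit}}-n$. Then $y_n\gtrsim (j+1)/n_A^{\mathrm{hit}}$, and the contribution is $O\bigl((n_A^{\mathrm{hit}})^{-1}\sum_j (j+1)^{-1}\bigr)=O\bigl(\log n_A^{\mathrm{hit}}/n_A^{\mathrm{hit}}\bigr)$.
\item This is at most $C_Q\ell_m/m$, because $n_A^{\mathrm{hit}}\ge m$ and $(1+\log x)/x$ is eventually decreasing.
\end{itemize}

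In your $X$ variable the per-step error is $n^{-2}y_n^{-1}=1/(nX_n)$, with $n\asymp N$ near the end, not $1/X_n^2$. That factor $1/n$ is what makes the sum small with $A$ fixed. Uniformity in $q$, including $q=0$, is then just the uniformity of the constants in \eqref{eq:intermediate-drift}, \eqref{eq:intermediate-defect} and \eqref{eq:small-y-field}.

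The only $N$-dependent errors left are two. The first is the $O_{Q,A}(1)$ terminal-layer steps, where $y_n=O_{Q,A}(1/N)$. The second is the tail $\int_0^{1/N}\dd y/B_{q,\Lam}(y)$. Both are $O_{Q,A}(N^{-1})$, which is exactly why $c_*=1$.
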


\begin{proof}
Analyticity is already explicit in \eqref{eq:theta-removable}, and the
terminal-layer count is Lemma \ref{lem:terminal-layer}.  It remains to prove
the quantitative transfer to the continuous passage time.

Let $\delta=\Lstar(q)-\Lam>0$ and define the one-step error
\begin{equation}\label{eq:terminal-step-defect}
 \epsilon_n=\log(1+1/n)
 -\int_{y_{n+1}}^{y_n}\frac{\dd y}{B_{q,\Lam}(y)}.
\end{equation}
Here $y_{n+1}<y_n$, the integrand is positive, and the integral is the
continuous one-step passage time appearing in \eqref{eq:defect}.  This order
of the endpoints is essential.
Since $y_N=1/N$, the definitions of $I$ and $F_\delta$ give the exact identity
\begin{align}
 &\bigl(\log N-I(\delta)\bigr)
 -\bigl(\log m-F_\delta(v_m)\bigr)\notag\\
 &\qquad=\sum_{n=m}^{N-1}\epsilon_n
 -\int_0^{1/N}\frac{\dd y}{B_{q,\Lam}(y)}.
 \label{eq:terminal-exact-telescope}
\end{align}
We estimate its terms in three disjoint regions.

\emph{Bottleneck and fixed compact region.}
Choose a fixed $\varepsilon>0$ below the bottleneck, uniformly for $q\in Q$,
and let $n_\varepsilon$ be the first index with $y_n\le\varepsilon$.
On the local bottleneck, the full-centre estimate
\eqref{eq:full-defect-cauchy} from Proposition \ref{prop:wedge} applies; on
the remaining fixed compact part, the field has a uniform positive lower
bound and a direct Taylor expansion gives the same bound.  Consequently
\begin{equation}\label{eq:terminal-bottleneck-defect}
 |\epsilon_n|
 \le\frac{C_Q}{n^2[\delta+(y_n-y_*)^2]}
 \le\frac{C_Q}{n^2\delta},
 \qquad m\le n<n_\varepsilon.
\end{equation}
Choose the scaled-step constant in (H5) no larger than that in
Lemma~\ref{lem:scaled-step}.  That lemma preserves the condition at every
step as the orbit approaches and exits the inner layer.  On the remaining
fixed compact region it follows by increasing the fixed index threshold,
since $\delta+(y_n-y_*)^2$ is bounded below there.
Consequently this entire region contributes at most $C_Q/(m\delta)$.

\emph{Intermediate region.}
Decrease $\varepsilon$ so that
\begin{equation}\label{eq:small-y-field}
 \frac12\le B_{q,\Lam}(y)\le\frac32,
 \qquad 0\le y\le\varepsilon,
\end{equation}
uniformly for $q\in Q$ and all sufficiently small $\delta$.  This is possible
because $B_{q,\Lam}(y)\to1$ as $y\downarrow0$, uniformly on compact
$q$-sets.  Let $n_A^{\mathrm{hit}}$ be the first index at or after $n_\varepsilon$ for which
$X_{n_A^{\mathrm{hit}}}\le A$.  In the region $n_\varepsilon\le n<n_A^{\mathrm{hit}}$, one has
$y_n<\varepsilon$ and $X_n>A$.  Uniform expansion of the exact map gives
\begin{align}
 y_n-y_{n+1}
 &=\frac1nB_{q,\Lam}(y_n)
 +O_Q\!\left(\frac1{n^2y_n}\right),
 \label{eq:intermediate-drift}\\
 |\epsilon_n|
 &\le\frac{C_Q}{n^2}\left(1+\frac1{y_n}\right).
 \label{eq:intermediate-defect}
\end{align}
Choose $A$ so large that the remainder in
\eqref{eq:intermediate-drift} is at most $1/(4n)$.  Then
$y_n-y_{n+1}\asymp_Q1/n$.  Since $y_{n_A^{\mathrm{hit}}}\ge1/n_A^{\mathrm{hit}}$, summing backwards yields
\begin{equation}\label{eq:intermediate-lower-y}
 y_n\ge c_Q\left(\log\frac{n_A^{\mathrm{hit}}}{n}+\frac1{n_A^{\mathrm{hit}}}\right),
 \qquad n_\varepsilon\le n\le n_A^{\mathrm{hit}}.
\end{equation}
For $n\le n_A^{\mathrm{hit}}/2$ the logarithm is bounded below and
$\sum n^{-2}y_n^{-1}\le C_Q\sum n^{-2}$.  In the final half write
$j=n_A^{\mathrm{hit}}-n$; then \eqref{eq:intermediate-lower-y} gives
$y_n\ge c_Q(j+1)/n_A^{\mathrm{hit}}$ and hence
\begin{align}
 \sum_{n=n_\varepsilon}^{n_A^{\mathrm{hit}}-1}\frac1{n^2y_n}
 &\le C_Q\left(\frac1{n_\varepsilon}
 +\frac1{n_A^{\mathrm{hit}}}\sum_{j< n_A^{\mathrm{hit}}/2}\frac1{j+1}\right)\notag\\
 &\le C_Q\left(\frac1m+\frac{\log n_A^{\mathrm{hit}}}{n_A^{\mathrm{hit}}}\right)
 \le C_Q\frac{\ell_m}{m}.
 \label{eq:intermediate-sum}
\end{align}
The last inequality uses $n_A^{\mathrm{hit}}\ge m$ and the eventual decrease of
$x\mapsto(1+\log x)/x$.  Equations
\eqref{eq:intermediate-defect}--\eqref{eq:intermediate-sum} bound the
intermediate contribution by $C_Q\ell_m/m$.

\emph{Terminal region.}
Lemma \ref{lem:terminal-layer} gives
$N-n_A^{\mathrm{hit}}=O_{Q,A}(1)$ and hence $n_A^{\mathrm{hit}}=N+O_{Q,A}(1)$.  Throughout these last steps
$y_n=O_{Q,A}(N^{-1})$.  The logarithmic increments, the integrals in
\eqref{eq:terminal-step-defect}, and the remaining tail in
\eqref{eq:terminal-exact-telescope} are therefore all $O_{Q,A}(N^{-1})$;
\eqref{eq:small-y-field} bounds the reciprocal denominator uniformly.
Combining the three regions in \eqref{eq:terminal-exact-telescope} gives
\[
 \left|\bigl(\log N-I(\delta)\bigr)
 -\bigl(\log m-F_\delta(v_m)\bigr)\right|
 \le C_Q\left(\frac1{m\delta}+\frac{\ell_m}{m}+N^{-1}\right),
\]
which is (H5), uniformly on $Q$, with $c_*=1$.
\end{proof}

\begin{proposition}[Uniform continuous bottleneck expansion]\label{prop:nolog}
Define
\begin{equation}\label{eq:I-power}
 I(q,\delta)=\int_0^{\Lstar(q)-\delta}
 \frac{\dd y}{B_{q,\Lstar(q)-\delta}(y)}.
\end{equation}
For every $Q\Subset(-1,\infty)$ there are
$\theta_Q\in(0,\pi/4)$ and $\varepsilon_Q>0$ such that, on the common sector
\begin{equation}\label{eq:xi-sector}
 \Sigma_Q=\left\{\xi\in\mathbb C:
 0<\lvert\xi\rvert<\varepsilon_Q,\quad
 \lvert\arg\xi\rvert<\theta_Q\right\},
\end{equation}
the following expansion holds at every fixed order $M$:
\begin{equation}\label{eq:I-expansion}
 I(q,\xi^2)=\frac{c(q)}{\xi}+J(q)
 +\sum_{k=1}^M\mu_k(q)\xi^k+O_{Q,M}(\xi^{M+1}),
\end{equation}
where $c,J,\mu_k$ are holomorphic near $Q$ and
\begin{equation}\label{eq:cq}
 c(q)=\frac{\pi}{\sqrt{a(q)b(q)}}
 =\pi\sqrt{2(1+q)\Lstar(q)}.
\end{equation}
There are no $\xi^k\log\xi$ terms.
\end{proposition}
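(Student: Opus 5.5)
We split the passage integral into a regular outer part and a local bottleneck part. In the $v$ coordinate of \eqref{eq:g-field},
\[
 I(q,\delta)=\int_{\beta_-}^{-r_0}\frac{\dd v}{g_{q,\delta}(v)}
 +\int_{-r_0}^{r_0}\frac{\dd v}{g_{q,\delta}(v)}
 +\int_{r_0}^{\beta_+}\frac{\dd v}{g_{q,\delta}(v)},
\]
with $r_0$ a fixed radius chosen from $Q$.

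\emph{The outer pieces.} On $[\beta_-,-r_0]$ the field is bounded away from zero, and its dependence on $(q,\delta)$ is holomorphic. The moving endpoint $\beta_-=y_*-\Lstar+\delta$ sits at $y=\Lam$, where $B=1$. By the moving-endpoint rule \eqref{eq:moving-endpoint-rule}, this piece is therefore holomorphic in $(q,\delta)$, hence a holomorphic function of $\xi^2$.

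The terminal piece $[r_0,\beta_+]$ corresponds to $0\le y\le y_*-r_0$ at fixed $\Lam=\Lstar-\delta$. Here $B_{q,\Lam}(y)=1+y\log_q(y/\Lam)$. For $q>-1$ one has $y\log_q(y/\Lam)\to0$ as $y\downarrow0$, since $y^{1+q}\to0$ and $y\log y\to0$. The integrand is therefore continuous up to $y=0$, and holomorphic in $(q,\Lam)$ for each fixed $y>0$. The endpoint $y=0$ is fixed, so no moving-boundary term arises. Uniform domination near $y=0$ then gives holomorphy in $(q,\delta)$ by the Weierstrass and Morera theorems. The relevant bound is $|y^{1+q}\Lam^{-q}|\le Cy^{1+\Re q}$ on a thin complex tube with $\Re q>-1$. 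This is the only place holomorphy in $y$ is avoided, exactly as flagged before the statement.

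Both outer pieces contribute only even, regular terms in $\xi$.

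\emph{The local piece.} Here we use a Weierstrass-type factorization. By Lemma~\ref{lem:double-root} and \eqref{eq:normal-form}, $g_{q,\delta}(v)$ is jointly holomorphic, and $g_{q,0}$ has a double zero at $v=0$ with $\partial_v^2g=2b(q)>0$. By the Weierstrass preparation theorem, uniformly for $q$ in a complex tube around $Q$,
\[
 g_{q,\delta}(v)=U(q,\delta,v)\bigl(v^2-2\alpha(q,\delta)v+\gamma(q,\delta)\bigr),
\]
with $U$ nonvanishing, $U(q,0,0)=b$, and $\alpha,\gamma$ holomorphic. Moreover $\alpha(q,0)=\gamma(q,0)=0$ and $\partial_\delta\gamma(q,0)=a/b$. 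Completing the square gives the discriminant
\[
 \Delta(q,\delta)=\gamma-\alpha^2=\frac ab\,\delta\,(1+O(\delta)).
\]
This has a holomorphic square root $\omega=\xi\sqrt{a/b}\,(1+O(\xi^2))$, which is an odd holomorphic function of $\xi$. The roots $\alpha\pm i\omega$ lie off the real segment for $\xi$ in a small sector $\Sigma_Q$.

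Partial fractions reduce $\int_{-r_0}^{r_0}\dd v/g$ to
\[
 \int\frac{\dd v}{U\,\bigl((v-\alpha)^2+\omega^2\bigr)}.
\]
Write $1/U=\phi(v)$, and expand $\phi$ about $v=\alpha$ as an even part plus an odd part in $v-\alpha$.

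\emph{Even part.} Writing $\phi_{\mathrm e}(w)=\psi(w^2)$, we have
\[
 \frac{\psi(w^2)}{w^2+\omega^2}=\frac{\psi(-\omega^2)}{w^2+\omega^2}+\text{holomorphic}.
\]
The first term integrates to $\frac{\psi(-\omega^2)}{\omega}\bigl(\arctan\frac{r_0-\alpha}{\omega}+\arctan\frac{r_0+\alpha}{\omega}\bigr)$. Since $\arctan(1/x)=\pi/2-\arctan x$ with $\arctan$ odd and holomorphic, this equals
\[
 \frac{\pi\psi(-\omega^2)}{\omega}+\text{(holomorphic in }\omega,\alpha).
\]
That is a pure pole $c/\xi$ plus a power series in $\xi$. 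The leading coefficient is $c=\pi/(b\sqrt{a/b})=\pi/\sqrt{ab}$, giving \eqref{eq:cq}.

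\emph{Odd part.} This is $w\chi(w^2)/(w^2+\omega^2)$. It integrates to
\[
 \tfrac12\chi(-\omega^2)\log\frac{(r_0-\alpha)^2+\omega^2}{(r_0+\alpha)^2+\omega^2}+\text{holomorphic}.
\]
The logarithm's argument stays near $1$, so no $\log\xi$ appears.

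All pieces are therefore Laurent series in $\xi$ with only a simple pole, holomorphic in $q$. Truncating gives \eqref{eq:I-expansion} with $O_{Q,M}(\xi^{M+1})$ remainders on the sector.

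\emph{Main obstacle.} The hardest step is the uniform preparation: choosing a common polydisc in $(q,\delta,v)$ over a complex neighbourhood of $Q$, including across $q=0$. We handle this with the integral representation of $\log_q$ in Lemma~\ref{lem:double-root}, cover $Q$ by finitely many discs, and appeal to uniqueness of the Weierstrass factorization so that the local coefficients patch together.
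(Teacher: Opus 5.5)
Your proof is correct. Your treatment of the two outer pieces matches the paper's: the moving endpoint sits at $y=\Lam$, where $B=1$, and the fixed endpoint $y=0$ is handled by domination. The genuine difference is the local piece.

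The paper prepares in $(q,\xi,w)$, so that the two roots $w_\pm(q,\xi)$ are themselves holomorphic in $\xi$. It closes $[-r,r]$ with the upper semicircle and applies the residue theorem to get $I=2\pi i\,\mathcal R_+(q,\xi)+H(q,\xi^2)$; the return arc is holomorphic because it stays away from both roots.

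You instead prepare in $(q,\delta,v)$ and complete the square, so that $\omega=\sqrt{\gamma-\alpha^2}$ is an odd function of $\xi$. You then integrate explicitly after splitting $1/U$ into even and odd parts about $v=\alpha$, using only arctangent and logarithm antiderivatives.

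Your route gives a sharper parity statement. The singular part is exactly $\pi\psi(-\omega^2)/\omega$, that is, $\xi^{-1}$ times a function holomorphic in $(q,\xi^2)$. So $c,\mu_1,\mu_3,\ldots$ all come from this single term, and $J,\mu_2,\mu_4,\ldots$ come from functions holomorphic in $\delta$. You also need only that the roots avoid the real segment, not that one root lies inside the half-disc and the other outside it. The two decompositions agree: $\pi\psi(-\omega^2)/\omega$ is $\pi i$ times the difference of the residues of $1/g_{q,\delta}$ at $\alpha+i\omega$ and $\alpha-i\omega$, which is the odd part in $\xi$ of the paper's $2\pi i\,\mathcal R_+$.

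The paper's residue form is more economical downstream. Its closed residue formula $(y_*+w)/w$ feeds directly into the computation of $\mu_1$ in Lemma~\ref{lem:mu1-general}.

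One small point needs tightening. Near $q=0$, the bound $|y^{1+q}\Lam^{-q}|\le Cy^{1+\operatorname{Re}q}$ does not by itself control the division by $q$ in $y\log_q(y/\Lam)$. Use the representation $\log_qz=\int_0^1z^{\theta q}\log z\,\dd\theta$ there as well. It gives the majorant $Cy^{1+q_-}(1+|\log y|)$, with $-1<q_-\le\min\{0,\operatorname{Re}q\}$ on the tube, and hence the uniform boundedness your Morera argument needs.
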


\begin{proof}
Compactness permits a uniform $r>0$ such that
\[
 0<y_*(q)-2r<y_*(q)+2r<\Lstar(q)
\]
for $q\in Q$.  Split the integral into the local interval
$[y_*-r,y_*+r]$ and the two outer pieces.  On the right outer piece the
upper endpoint $\Lstar(q)-\delta$ is moving, but the denominator equals $1$
at that endpoint.  Hence \eqref{eq:moving-endpoint-rule} makes this piece
holomorphic in $(q,\delta)$.  The left piece is also holomorphic in these
parameters.  To see this uniformly at $y=0$, use
\[
 B_{q,\Lam}(y)=1+\frac{\Lam^{-q}y^{q+1}-y}{q}\qquad(q\ne0)
\]
and its $q=0$ continuation.  Choose
$-1<q_-<\min\{\inf Q,0\}$ and shrink the complex neighbourhood of $Q$ so that
$\operatorname{Re}q\ge q_-$ there.  Using
the integral form of the removable quotient at $q=0$, every fixed parameter
derivative is then dominated near zero by
$C_{Q,m}y^{q_-+1}(1+|\log y|^m)$, which is integrable.  This supplies one
common majorant for differentiation under the integral sign.

Put $w=y-y_*(q)$.  Weierstrass preparation in $(q,\xi,w)$ gives
\begin{equation}\label{eq:weierstrass-roots}
 B_{q,\Lstar(q)-\xi^2}(y_*(q)+w)
 =U(q,\xi,w)(w-w_+(q,\xi))(w-w_-(q,\xi)),
\end{equation}
where $U$ is nonzero and
\begin{equation}\label{eq:root-separation}
 w_\pm(q,\xi)=\pm i\sqrt{\frac{a(q)}{b(q)}}\,\xi+O_Q(\xi^2).
\end{equation}
Shrink a complex neighbourhood of $Q$, $\varepsilon_Q$, and $\theta_Q$ if
necessary.  Then for every $\xi\in\Sigma_Q$, the root $w_+$ lies inside the
fixed upper half-disc $|w|<r$, $\operatorname{Im}w>0$, the root $w_-$ lies
outside it, and neither root meets its boundary.

Let $\Gamma_+$ be the positively oriented contour formed by the segment
$[-r,r]$ followed by the upper semicircle
\begin{equation}\label{eq:upper-return-arc}
 \gamma_+(\vartheta)=re^{i\vartheta},
 \qquad 0\le\vartheta\le\pi,
\end{equation}
traversed from $r$ back to $-r$.  The residue theorem gives the exact identity
\begin{align}
 &\int_{-r}^{r}
 \frac{\dd w}{B_{q,\Lstar(q)-\xi^2}(y_*(q)+w)}\notag\\
 &\quad=2\pi i\,
 \operatorname*{Res}_{w=w_+(q,\xi)}
 \frac1{B_{q,\Lstar(q)-\xi^2}(y_*(q)+w)}
 -\int_{\gamma_+}
 \frac{\dd w}{B_{q,\Lstar(q)-\xi^2}(y_*(q)+w)}.
 \label{eq:contour-decomposition}
\end{align}
The return-arc integral stays a fixed distance from both coalescing roots and
is therefore holomorphic in $(q,\delta)$.  Together with the outer pieces,
\eqref{eq:contour-decomposition} yields
\begin{equation}\label{eq:residue-plus-even}
 I(q,\xi^2)=2\pi i\,\mathcal R_+(q,\xi)+H(q,\xi^2),
\end{equation}
where $H$ is holomorphic and $\mathcal R_+$ is the residue at $w_+$.  By
\eqref{eq:root-separation}, $\mathcal R_+$ has a simple pole at $\xi=0$ and a
convergent Laurent expansion in integer powers of $\xi$; its leading term is
$(2i\sqrt{a(q)b(q)}\,\xi)^{-1}$.  Thus
$2\pi i\mathcal R_+=\dfrac{\pi}{\sqrt{a(q)b(q)}\,\xi}+O_Q(1)$.  Formula
\eqref{eq:residue-plus-even} proves \eqref{eq:I-expansion}, including the
absence of all $\xi^k\log\xi$ terms.  It also shows explicitly that moving
endpoints are absorbed into the holomorphic even term $H(q,\xi^2)$.
\end{proof}

\medskip
\begin{proof}[Proof of Theorem \ref{thm:main}]
First carry out the construction on an arbitrary compact interval
$Q\Subset(-1,\infty)$ with nonempty interior.
Lemma \ref{lem:capture} and Propositions \ref{prop:wedge}--\ref{prop:nolog}
verify (H1)--(H6) uniformly on $Q$, with the exceptional endpoint treated as
specified above.  Theorem \ref{thm:abstract} gives
\eqref{eq:main-expansion}, while Propositions \ref{prop:wedge} and
\ref{prop:nolog} make the discrete and continuous phase data holomorphic
near $Q$.

Restrict each local construction to the interior of its interval.  At every
point in an overlap, two normalized expansions describe the same sequence
$\Lstar(q)-\lambda_N(q)$.  If their phases differ by $s$, comparison at
cubic order gives $0=2sA_2(q)$; since $A_2(q)>0$, $s=0$.
Successive comparison of powers then identifies every $A_j(q)$.
These interiors cover $(-1,\infty)$, so the functions patch uniquely to
real-analytic $\kappa,A_j$ on the whole interval.  Every compact subset is
contained in one compact interval, which gives the asserted uniformity for
arbitrary compact $Q$, including sets with empty interior.
Formula \eqref{eq:cq} gives
$A_2=c^2=\mathcal K(q)$.  The coefficient $A_3$ vanishes by the phase-normalized
asymptotic inversion.  The computation of $A_4$ is completed in the next section.
\end{proof}

\section{Local coefficients and the Carleman calculation}
\label{sec:coefficients}

\begin{lemma}[The first local odd coefficient]\label{lem:mu1-general}
For $q\ne0$, the local residue expansion in Proposition \ref{prop:nolog}
satisfies
\begin{equation}\label{eq:mu1-general}
 \mu_1(q)c(q)
 =-\frac{\pi^2}{6}(2q^2+5q+5).
\end{equation}
The identity extends analytically to $q=0$.
\end{lemma}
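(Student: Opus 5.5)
The plan is to read $\mu_1$ off the residue term in \eqref{eq:residue-plus-even}. The term $H(q,\xi^2)$ is even in $\xi$. So the coefficient of $\xi^1$ in $I(q,\xi^2)$ comes entirely from $2\pi i\,\mathcal R_+(q,\xi)$, and only a finite jet of $B$ at the critical point is needed. Write $w=y-y_*(q)$ and $\delta=\xi^2$, and expand
\[
 B_{q,\Lstar-\delta}(y_*+w)
 =\alpha\delta+\beta w^2+\gamma_3w^3+\gamma_4w^4+e\,\delta w+f\,\delta w^2+d\,\delta^2
 +O(|w|^5+\delta|w|^3+\delta^2|w|+\delta^3).
\]
By \eqref{eq:ab} and Lemma~\ref{lem:double-root}, $\alpha=a(q)=\Lstar^{-1}$ and $\beta=b(q)=1/(2(1+q))$. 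The new quantities are the Taylor coefficients $\gamma_3,\gamma_4,e,f,d$. For $q\ne0$ I compute them from $B_{q,\Lam}(y)=1+(\Lam^{-q}y^{q+1}-y)/q$. Each derivative is a monomial in $y^{q+1-k}\Lam^{-q-j}$ evaluated with $(y_*/\Lstar)^q=1/(1+q)$ and $y_*=1+q$, so each coefficient is an explicit rational function of $q$ times a power of $\Lstar$. Note that $\partial_\Lam$ contributes with a minus sign, because $\Lam=\Lstar-\delta$.

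Next I rescale $w=\xi u$, which factors out $\xi^2$:
\[
 B=\xi^2\bigl[F_0(u)+\xi P_1(u)+\xi^2P_2(u)+O(\xi^3)\bigr],
\]
with $F_0=\alpha+\beta u^2$, $P_1=\gamma_3u^3+eu$, and $P_2=\gamma_4u^4+fu^2+d$. Since $\dd w=\xi\,\dd u$, the residue equals $\xi^{-1}$ times the residue in $u$ of
\[
 \frac1{F_0}-\xi\frac{P_1}{F_0^2}+\xi^2\Bigl(\frac{P_1^2}{F_0^3}-\frac{P_2}{F_0^2}\Bigr)+O(\xi^3),
\]
taken at $u_+=i\sqrt{\alpha/\beta}$. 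The root $u_+$ is the limit of $w_+/\xi$ from \eqref{eq:root-separation}, and the perturbed root stays inside the same fixed small disc, so the residue can be taken there.

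The $\xi^0$ term returns $c(q)/\xi$. The $\xi^1$ term is odd in $u$, and its residue at $u_+$ is real-multiplied by $i$; it contributes to $J$ and is irrelevant here. The $\xi^2$ bracket gives
\[
 \mu_1=2\pi i\operatorname*{Res}_{u=u_+}\Bigl(\frac{P_1^2}{F_0^3}-\frac{P_2}{F_0^2}\Bigr).
\]
These are residues of rational functions at a pole of order at most three. I would evaluate them with the standard formulas for $u^{2k}/(\alpha+\beta u^2)^m$. The result is a combination of $\gamma_3^2,\gamma_3e,e^2,\gamma_4,f,d$ with powers of $\alpha,\beta$.

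Finally I multiply by $c=\pi/\sqrt{\alpha\beta}$ and substitute the explicit jet. All powers of $\Lstar$ should cancel, leaving a quadratic polynomial in $q$, which I check equals $-\frac{\pi^2}{6}(2q^2+5q+5)$. The extension to $q=0$ is then immediate, since Proposition~\ref{prop:nolog} makes $\mu_1$ and $c$ holomorphic near $q=0$ and the right-hand side is a polynomial. As a consistency check I would also redo the jet directly from $B_{0,e}(y)=1+y\log(y/e)$, expecting the value $-5\pi^2/6$ at $q=0$.

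The main obstacle is bookkeeping rather than ideas. The errors to guard against are the sign from $\Lam=\Lstar-\delta$, the orientation $w=-v$ relative to \eqref{eq:normal-form}, and the correct choice of root $u_+$ in the sector $\Sigma_Q$. I would first verify that the leading term reproduces $c(q)$ in \eqref{eq:cq} with the right sign, to lock in these conventions before computing the order-$\xi^2$ residues.
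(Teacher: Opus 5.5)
Your proposal is correct, and it reaches the identity by a different route from the paper's.

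\textbf{The paper's route.} The paper never expands $1/B$ in a rescaled variable. It uses the exact identity $\partial_yB_{q,\Lam}(z)=(z-y_*)/z$ at every zero $z$, valid for every $\Lam$. The residue is therefore exactly $(y_*+w_+)/w_+$, and the computation collapses to a one-variable inversion of the root-parameter equation $\Phi_q(y_*+w_+)=\Lstar-\xi^2$ through third order. The value of $\mu_1c$ is then governed by the single combination $4\alpha_2\alpha_4-3\alpha_3^2$.

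\textbf{Your route.} You do the generic saddle-node perturbation calculation. It needs the full weighted two-variable jet and residues of $u^{2k}/F_0^m$, but nothing special about $B$. It does close. With $s=1+q$ one finds $\gamma_3=(q-1)/(6s^2)$, $\gamma_4=(q-1)(q-2)/(24s^3)$, $e=1/\Lstar$, $f=q/(2s\Lstar)$ and $d=s/(2\Lstar^2)$. Reduce each term via $u^2=(F_0-\alpha)/\beta$ to $2\pi i\operatorname{Res}_{u_+}F_0^{-m}=\int_{\mathbb R}F_0^{-m}\,\dd u$. This reduction is genuinely needed for $u^6/F_0^3$ and $u^4/F_0^2$, which do not decay on the real line. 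The result is
\[
 \mu_1=\frac{\pi}{\sqrt{\alpha\beta}}\Bigl[-\frac{15\alpha\gamma_3^2}{8\beta^3}+\frac{3\gamma_3e}{4\beta^2}+\frac{e^2}{8\alpha\beta}+\frac{3\alpha\gamma_4}{2\beta^2}-\frac{f}{2\beta}-\frac{d}{2\alpha}\Bigr].
\]
Here the $e^2$ and $d$ terms cancel, and multiplying by $c=\pi/\sqrt{\alpha\beta}$ gives $-\frac{\pi^2}{6}(2q^2+5q+5)$. At $q=1$ this reproduces $\mu_1(1)=-\pi/2$ from the explicit arctangent formula. Only $\gamma_3^2,\gamma_3e,e^2,\gamma_4,f,d$ enter, and these are invariant under $w\mapsto-w$, so your orientation worry is moot for $\mu_1$.

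\textbf{A small correction.} Your aside on the $\xi^1$ bracket is harmless but slightly misstated. The residue of an odd rational function at $u_+$ is real. Its $2\pi i$-multiple is therefore purely imaginary, and it is compensated by the complex return-arc part of $H$.

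\textbf{What each route buys.} The paper's closed-form residue produces every odd coefficient from the single series $y_*/w_+(\xi)$. Higher $\mu_{2k+1}$ then need only a longer inversion of $\Phi_q$. Your formula expresses $\mu_1$ through an arbitrary normal-form jet. It would therefore apply unchanged to fields lacking the exact $q$-structure, such as the broader homogeneous-mean class mentioned in the final section.
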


\begin{proof}
A nonzero zero $z$ of $B_{q,\Lam}$ satisfies
\[
 \Lam=\Phi_q(z),\qquad
 \Phi_q(z)=\left(\frac{z^{q+1}}{z-q}\right)^{1/q},
\]
with the local analytic branch fixed near $z=y_*=1+q$.  At such a zero,
\[
 \partial_yB_{q,\Lam}(z)
 =\log_q(z/\Lam)+(z/\Lam)^q
 =\frac{z-y_*}{z}.
\]
Thus, on writing $w=z-y_*$, the residue of $B^{-1}$ is exactly
\begin{equation}\label{eq:general-residue}
 \operatorname*{Res}_{y=z}\frac1{B_{q,\Lam}(y)}
 =\frac{y_*+w}{w}.
\end{equation}

Expansion of the root-parameter function at its critical point gives
\begin{equation}\label{eq:Phi-expansion}
 \frac{\Phi_q(y_*+w)}{\Lstar}
 =1+\alpha_2w^2+\alpha_3w^3+\alpha_4w^4+O(w^5),
\end{equation}
where
\begin{equation}\label{eq:alphas}
 \alpha_2=\frac1{2(1+q)},\qquad
 \alpha_3=-\frac{q+2}{3(1+q)^2},\qquad
 \alpha_4=\frac{2q^2+7q+7}{8(1+q)^3}.
\end{equation}
In particular,
\begin{equation}\label{eq:alpha-combination}
 4\alpha_2\alpha_4-3\alpha_3^2
 =\frac{2q^2+5q+5}{12(1+q)^4}.
\end{equation}
Put
\[
 \beta_2=\Lstar\alpha_2,\qquad
 \beta_3=\Lstar\alpha_3,\qquad
 \beta_4=\Lstar\alpha_4.
\]
The upper-half-plane solution of
$\Phi_q(y_*+w_+)=\Lstar-\xi^2$ is obtained by coefficient comparison:
\begin{equation}\label{eq:general-root}
 w_+(\xi)=\frac{i}{\sqrt{\beta_2}}\xi
 +\frac{\beta_3}{2\beta_2^2}\xi^2
 +\frac{i(\beta_2\beta_4-5\beta_3^2/4)}
 {2\beta_2^{7/2}}\xi^3+O(\xi^4).
\end{equation}
By the exact contour decomposition \eqref{eq:residue-plus-even},
\begin{equation}\label{eq:odd-residue-identity}
 I(q,\xi^2)
 =2\pi i\left(1+\frac{y_*}{w_+(\xi)}\right)+H(q,\xi^2),
\end{equation}
where $H$ is holomorphic in $\xi^2$.  Thus the constant $1$ in the residue is
regular, and every odd Laurent coefficient of the real passage time comes
from
\[
 \operatorname{Re}\left(\frac{2\pi i\,y_*}{w_+(\xi)}\right).
\]
No branch of an endpoint logarithm is being suppressed here: the identity is
the residue theorem on the explicitly oriented contour
\eqref{eq:upper-return-arc}.
Inverting \eqref{eq:general-root} gives
\begin{align}
 c(q)&=2\pi y_*\sqrt{\beta_2}
 =\pi\sqrt{2(1+q)\Lstar},\label{eq:c-residue}\\
 \mu_1(q)&=
 -\frac{\pi y_*}{4\beta_2^{5/2}}
 (4\beta_2\beta_4-3\beta_3^2).
 \label{eq:mu-residue}
\end{align}
Multiplying these formulas and using
\eqref{eq:alphas}--\eqref{eq:alpha-combination} proves
\eqref{eq:mu1-general}.  Both sides are real analytic in $q$ through the
removable point $q=0$, so the identity extends there.
\end{proof}

Combining \eqref{eq:mu1-general} with $A_4=2\mu_1c^3$ and
$c^2=\mathcal K(q)$ gives
\eqref{eq:A4q}; \eqref{eq:A4t} follows from $q=t/(1-t)$.  No global orbit
data enter this coefficient.  For completeness, and to avoid using analytic
continuation as the only check at the removable parameter, we now calculate
$q=0$ independently.

\begin{lemma}[Direct Carleman residue]\label{lem:carleman-residue}
At $q=0$,
\begin{equation}\label{eq:carleman-mu}
 c(0)=\pi\sqrt{2e},\qquad
 \mu_1(0)=-\frac{5\pi}{6\sqrt{2e}},
\end{equation}
and hence $A_4(0)=-(10/3)e\pi^4$.
\end{lemma}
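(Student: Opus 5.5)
The computation is carried out entirely at $q=0$, with no parameter limit. There the field is $B_{0,\Lam}(y)=1+y\log(y/\Lam)$, the bottleneck is at $y_*=1$, and $\Lstar=e$. From \eqref{eq:ab} we have $a(0)=1/e$ and $b(0)=1/2$, so $c(0)=\pi/\sqrt{ab}=\pi\sqrt{2e}$ directly. For $\mu_1$ we repeat the residue mechanism of Lemma~\ref{lem:mu1-general} in its Carleman form.

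\emph{Step 1: the residue.} A zero $z$ of $B_{0,\Lam}$ satisfies $\log(z/\Lam)=-1/z$, that is,
\[
 \Lam=\Phi_0(z):=z\,e^{1/z}.
\]
At such a zero,
\[
 \partial_yB_{0,\Lam}(z)=\log(z/\Lam)+1=1-\frac1z=\frac{z-1}{z}.
\]
Hence, with $w=z-1$, the residue of $1/B_{0,\Lam}$ at $z$ is $(1+w)/w$. This is the $q=0$ case of \eqref{eq:general-residue}, checked directly.

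\emph{Step 2: expand $\Phi_0$ about its critical point.} Write $\Phi_0(1+w)/e=(1+w)\exp\bigl(1/(1+w)-1\bigr)$. The exponent is
\[
 -w+w^2-w^3+w^4-\cdots.
\]
Expanding the exponential to fourth order and multiplying by $1+w$ should give
\[
 1+\tfrac12w^2-\tfrac23w^3+\tfrac78w^4+O(w^5).
\]
This agrees with \eqref{eq:alphas} at $q=0$, but here we derive it independently. The coefficient arithmetic is the one place an error could enter, so I would compute the $w^3$ and $w^4$ terms twice. As a check, $4\alpha_2\alpha_4-3\alpha_3^2=\tfrac74-\tfrac43=\tfrac5{12}$.

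\emph{Step 3: the upper root and the odd Laurent coefficient.} Set $\beta_k=e\alpha_k$ and solve $\Phi_0(1+w_+)=e-\xi^2$ in the upper half-plane by coefficient comparison. This gives
\[
 w_+=\frac{i}{\sqrt{\beta_2}}\,\xi+\frac{\beta_3}{2\beta_2^2}\,\xi^2+\frac{i\bigl(\beta_2\beta_4-\tfrac54\beta_3^2\bigr)}{2\beta_2^{7/2}}\,\xi^3+O(\xi^4).
\]
By the contour identity \eqref{eq:residue-plus-even}, which holds at $q=0$ without any limiting argument, we have
\[
 I(0,\xi^2)=2\pi i\bigl(1+1/w_+\bigr)+H(\xi^2).
\]
Inverting the series for $w_+$ and reading off the coefficients of $\xi^{-1}$ and $\xi^{1}$ in $2\pi i/w_+$ yields
\[
 c=2\pi\sqrt{\beta_2},\qquad \mu_1=-\frac{\pi}{4\beta_2^{5/2}}\bigl(4\beta_2\beta_4-3\beta_3^2\bigr).
\]
Now substitute the values at $q=0$. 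We have $\beta_2=e/2$, so $c=\pi\sqrt{2e}$. Also $4\beta_2\beta_4-3\beta_3^2=e^2\cdot\tfrac5{12}$ and $\beta_2^{5/2}=(e/2)^{5/2}$. These give
\[
 \mu_1=-\frac{5\pi}{6\sqrt{2e}}.
\]
The main work is this inversion: the $\xi^2$ term of $w_+$ feeds into the $\xi^1$ coefficient of $1/w_+$. I would therefore verify it by checking the product $\mu_1c=-\tfrac{5\pi^2}{6}$ against \eqref{eq:mu1-general} at $q=0$, where it should equal $-\tfrac{\pi^2}{6}\cdot5$. It does.

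\emph{Step 4: the coefficient $A_4$.} Theorem~\ref{thm:abstract} gives $A_4=2\mu_1c^3=2(\mu_1c)c^2$. With $c^2=2e\pi^2$ this is
\[
 A_4(0)=2\cdot\Bigl(-\frac{5\pi^2}{6}\Bigr)\cdot 2e\pi^2=-\frac{10}{3}e\pi^4.
\]
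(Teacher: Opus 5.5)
Your proposal is correct and is essentially the paper's argument: you use the same residue $y_+/(y_+-1)=1+1/w_+$ at the upper root and the same contour identity \eqref{eq:residue-plus-even}, differing only in that you obtain $w_+$ by reverting $\Phi_0(z)=ze^{1/z}$ (the $q=0$ case of the mechanism in Lemma~\ref{lem:mu1-general}) rather than by substituting directly into $1+y\log(y/(e-\xi^2))$; with $\beta_2=e/2$, $\beta_3=-2e/3$, $\beta_4=7e/8$ your reversion reproduces \eqref{eq:carleman-zero} exactly. One small caution is that comparing $\mu_1c$ with \eqref{eq:mu1-general} does not independently test your inversion step, because that identity rests on the same formula \eqref{eq:mu-residue}; expanding $y_+/(y_+-1)$ directly, as the paper does, is what gives a genuinely separate check.
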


\begin{proof}
Set $\delta=\xi^2$.  The Carleman denominator is
\[
 B_{0,e-\xi^2}(y)=1+y\log\frac{y}{e-\xi^2}.
\]
Its upper-half-plane local zero has the expansion
\begin{equation}\label{eq:carleman-zero}
 y_+(\xi)=1+i\sqrt{\frac2e}\,\xi
 -\frac4{3e}\xi^2
 -\frac{17i\sqrt2}{36e^{3/2}}\xi^3+O(\xi^4),
\end{equation}
obtained by direct substitution.  At a zero,
$\log(y_+/(e-\xi^2))=-1/y_+$, so
\begin{align}
 \operatorname*{Res}_{y=y_+(\xi)}\frac1{B_{0,e-\xi^2}(y)}
 &=\frac{y_+}{y_+-1}\notag\\
 &=-\frac{i\sqrt{2e}}{2\xi}+\frac13
 +\frac{5i\sqrt2}{24\sqrt e}\xi+O(\xi^2).
 \label{eq:carleman-residue}
\end{align}
The contour identity \eqref{eq:residue-plus-even} says that the passage time is
$2\pi i$ times this residue plus a function holomorphic in $\xi^2$; the latter
changes neither the $\xi^{-1}$ nor the $\xi$ coefficient.  Formula \eqref{eq:carleman-mu}
follows, and $2\mu_1c^3=-(10/3)e\pi^4$.
\end{proof}

\begin{remark}[Where discreteness first enters]
The phase itself contains the global discrete defect
$\tau_0$.  After centring by $\kappa=-(J+\tau_0)$, however, the coefficients
$A_2$ and $A_4$ are determined by the local continuous bottleneck and
$A_3=0$.  The first post-phase coefficient that depends on a nonconstant
discrete response is
\[
 A_5=2(\mu_2+\tau_1)c^4.
\]
This local/global separation is one reason to retain the phase-normalized
form of the expansion.
\end{remark}

\section{Classical projections and a limited numerical check}
\label{sec:projections}

If $s=1/t$ and $x_k=a_k^s$, then
\begin{equation}\label{eq:projection-identity}
 P_t(x_1,\ldots,x_n)
 =\left(\frac1n\sum_{k=1}^na_k\right)^s.
\end{equation}
Thus $s>1$ produces the classical positive Hardy quotient, $s<0$ the
negative-power quotient, and $t=0$ the geometric-mean limit.  To make the
positive-branch normalization explicit, put $s=p>1$ and
$p'=p/(p-1)$.  Then \eqref{eq:projection-identity} shows that
$\Lam_N(1/p)$ is exactly the best constant $\mu_N$ in the classical finite
$\ell^p$ Hardy inequality.  For $p\ge2$, Gao's Theorem~1.1
\cite{GaoHardy}, specialized to the unweighted sequence $\lambda_k=1$ and
hence to $L=1$, gives
\begin{equation}\label{eq:gao-unweighted}
 \mu_N
 =(p')^p-\frac{2\pi^2(p')^{p+1}}{(\log N)^2}
 +O((\log N)^{-3}).
\end{equation}
Since $t=1/p$ corresponds to $q=1/(p-1)$ and
\[
 A_2\!\left(\frac1{p-1}\right)=2\pi^2(p')^{p+1},
\]
the positive-branch projection of \eqref{eq:A2t} agrees with
\eqref{eq:gao-unweighted}.  This comparison uses Gao's theorem itself, not
the distinct formula attributed to Ackermans in the introduction of
\cite{GaoHardy}.  At $t=0$, \eqref{eq:A2t} gives de Bruijn's correction.

There is a stronger benchmark at $p=2$, where
\[
 p=2\quad\Longleftrightarrow\quad t=\frac12
 \quad\Longleftrightarrow\quad q=1,
 \qquad \Lam_N(1/2)=\lambda_N(1).
\]
This common constant is the norm of the finite Hilbert $L$-matrix used in
\cite{Stampach2022}, so the comparison concerns exactly the same finite
problem.  Put
\[
 \kappa_1=\gamma+6\log2,
 \qquad X=\log N+\kappa_1.
\]
Here $\gamma$ is Euler's constant.
Equation~(42) and Appendix~A.2 of \cite{Stampach2022}, rewritten in this
centred scale, give
\begin{equation}\label{eq:p2-benchmark}
 4-\lambda_N(1)
 =\frac{16\pi^2}{X^2}-\frac{64\pi^4}{X^4}
 +\frac{416\pi^4\zeta(3)}{3X^5}+O(X^{-6}).
\end{equation}
The full coefficient formula in Theorem~1.4 and (1.14)--(1.16) of
\cite{DimitrovGadjevIsmail2024}, by contrast, amounts to expanding
$16\pi^2/(X^2+4\pi^2)$ for the defect.  It agrees with
\eqref{eq:p2-benchmark} through $X^{-4}$ but has no $X^{-5}$ term.  Thus the
two published higher-order assertions conflict at the fifth centred order.

For completeness, the nonzero term in \eqref{eq:p2-benchmark} can be checked
directly from the Gamma phase in the characteristic equation used in
\cite{Stampach2022}.  On the branch continuous at the origin, set
\[
 \vartheta(x)=\operatorname{Im}\!\left(
 3\log\Gamma(1/2-ix)-\log\Gamma(1-2ix)\right).
\]
The standard polygamma values give
\[
 \vartheta(x)=\kappa_1x-\frac{13}{3}\zeta(3)x^3+O(x^5).
\]
Consequently the smallest spectral parameter satisfies
\[
 Xx-\frac{13}{3}\zeta(3)x^3+O(x^5)=\pi,
 \qquad
 x=\frac\pi X+\frac{13\pi^3\zeta(3)}{3X^4}+O(X^{-6}).
\]
Substitution into
$4-\lambda_N(1)=16x^2/(1+4x^2)$ recovers
\eqref{eq:p2-benchmark}.  In particular, uniqueness of the normalized phase
and \eqref{eq:abstract-coefficients} imply
\begin{equation}\label{eq:p2-phase-data}
 \kappa(1)=\gamma+6\log2,\qquad
 A_2(1)=16\pi^2,\qquad A_4(1)=-64\pi^4,\qquad
 A_5(1)=\frac{416}{3}\pi^4\zeta(3),
\end{equation}
and, since $c(1)=4\pi$,
\begin{equation}\label{eq:p2-global-response}
 \mu_2(1)+\tau_1(1)=\frac{13}{48}\zeta(3).
\end{equation}
The continuous part can also be separated explicitly.  For $q=1$,
\[
 I(1,\delta)
 =4\sqrt{\frac{4-\delta}{\delta}}
 \arctan\sqrt{\frac{4-\delta}{\delta}},
\]
and hence, with $\xi=\sqrt\delta$,
\[
 I(1,\xi^2)=\frac{4\pi}{\xi}-4-\frac\pi2\xi
 +\frac13\xi^2+O(\xi^3).
\]
Therefore
\[
 \mu_2(1)=\frac13,
 \qquad
 \tau_1(1)=\frac{13}{48}\zeta(3)-\frac13.
\]
These identities are external $p=2$ consistency data, not inputs to the
proof of Theorem~\ref{thm:main}.  The agreement through fourth order and the
resolved fifth-order value are benchmarks for the unified mechanism, not
novelty claims.

At $q=0$, the exact map admits a stable backward form:
\begin{equation}\label{eq:carleman-backward}
 y_n=\frac1n+y_{n+1}
 \exp\left(\frac{\log y_{n+1}-\log\Lam}{n}\right),
 \qquad y_N=\frac1N.
\end{equation}
Numerically, we solve the backward residual
$R_N(\Lam)=y_1(\Lam)-\Lam$ by a bracket-protected Newton iteration while
propagating $\partial_\Lam y_n$.  An x87 extended-precision implementation
(64-bit significand) supplies the archived data through $N=10^7$.  An
independent IEEE binary128 implementation at
$N=10^3,10^4,10^5,10^6$ agrees with it within $8.2\times10^{-19}$; its
backward residuals are of order $10^{-31}$.  At $N=10^7$ the archived value is
\[
 \lambda_N(0)=\Lam_N(0)=2.5885777308970364\ldots.
\]
On $N\ge10^5$, the spread of the leading-only inferred phase is
$0.0964627$; inserting the theoretical $A_4(0)$ reduces it to $0.0464549$, a
ratio of $0.4816$.  In seventh-order fits with $A_4$ left free, several data
windows recover the theoretical coefficient within about $0.2$--$0.5$
percent.  With $A_4$ fixed, fits through orders $7$--$9$ place
$\kappa(0)$ between $3.84844$ and $3.84868$.  The design-matrix condition
number, however, grows from about $10^4$ to $10^{10}$ and the fitted $A_5$
remains truncation-sensitive.  We therefore report no numerical value for
$A_5$.  In particular, the agreement of the free-$A_4$ fits is diagnostic,
not an independent verification of Lemma~\ref{lem:mu1-general}:
on this range the $X^{-4}$ and $X^{-5}$ columns are nearly collinear.  The
code, data, fitting audit, and symbolic residue check used for these
diagnostics are supplied in the accompanying supplementary archive; none of
them is used in the proof.

At the Carleman point the constructive phase formula is
\[
 \kappa(0)=-J(0)-\tau_0(0),\qquad
 \tau_0(0)=\sum_{n\ge1}d_{n,0,0}(v_n^*(0)),
\]
and Theorem \ref{thm:abstract} proves that the defect series is absolutely
convergent.  It can therefore be evaluated in principle by iterating the
critical orbit, estimating the summable tail, and computing $J(0)$.  The
present numerical audit does not perform that direct defect-series
evaluation; instead, finite-section fits give the diagnostic value
\[
 \kappa(0)\approx3.8486.
\]
Thus this decimal is neither a certified interval nor a certified evaluation
of $-(J(0)+\tau_0(0))$, and it is not used in any proof.  We do not obtain a
reduction of $\tau_0(0)$ to Euler--Maclaurin constants, Gamma or Barnes
functions; deciding whether such a reduction exists is a separate arithmetic
problem about the exact critical discrete orbit.

\section{Scope and open directions}

Theorem \ref{thm:main} is uniform on compact subsets of the full analytic
parameter interval.  The loss of uniformity at its two ends has a concrete
scale mechanism.  If $\varepsilon=1+q\downarrow0$, then
\[
 y_*=\varepsilon,\qquad \Lstar\to1,
 \qquad b(q)=\frac1{2\varepsilon},
 \qquad c(q)\sim\pi\sqrt{2\varepsilon}.
\]
The fixed local neighbourhood shrinks with $y_*$, and the terminal scale
$y\asymp1/n$ reaches the bottleneck at $n\asymp\varepsilon^{-1}$.  Thus when
$N\varepsilon$ is not large, the terminal layer and the bottleneck can no
longer be separated by the constants used in (H3)--(H5).

At the other endpoint,
\[
 y_*\sim q,\qquad \Lstar=q+\log q+O(1),
 \qquad c(q)\sim\pi\sqrt2\,q.
\]
The fixed-$q$ inversion has $\sqrt\delta\asymp c(q)/\log N$, so its local
small-parameter regime is not uniform once $q$ is comparable with $\log N$.
At the limiting arithmetic mean itself one has exactly
\[
 \Lam_N(1)=\sum_{n=1}^N\frac1n\sim\log N,
\]
which also shows that the limits $N\to\infty$ and $q\to\infty$ cannot simply
be interchanged.  Endpoint double-scaling problems therefore require separate
normalizations and are not consequences of Theorem \ref{thm:main}.

The all-order proof controls each fixed jet separately; the constants $C_j$
in Lemma \ref{lem:discrete-jet} are not estimated as $j\to\infty$.  Hence the
present argument proves neither convergence nor divergence of the formal
series, and in particular it does not establish a Gevrey--1 bound.  Factorial
growth and Borel summability are natural questions, not conclusions of this
paper.  Other natural problems are the variable-step phase theory generated
by matched weights and the structure of the global coefficients from $A_5$
onward.  In the coordinates used here, general matched weights produce a
related exact recurrence with two distinct steps.  Indeed, let $w_n>0$,
$W_n=\sum_{k\le n}w_k$, and
\[
 M_n=\left(W_n^{-1}\sum_{k\le n}w_kx_k^t\right)^{1/t},
\]
with the weighted geometric mean at $t=0$.  For the quotient
$\sum_{n\le N}w_nM_n/\sum_{n\le N}w_nx_n$, the Euler equations at a
positive stationary point give
\[
 \Lam x_k^{1-t}=\sum_{n=k}^N\frac{w_n}{W_n}M_n^{1-t}.
\]
Set $y_n=\Lam(x_n/M_n)^{1-t}$,
$a_n=w_n/W_n$, and $b_n=w_{n+1}/W_n$.
Subtracting consecutive Euler equations and using the weighted prefix
update yields, for $q\ne0$,
\begin{equation}\label{eq:weighted-two-step}
 y_{n+1}=(y_n-a_n)
 \left[\frac{1+b_n}
 {1+b_n((y_n-a_n)/\Lam)^q}\right]^{1/q}.
\end{equation}
At $q=0$ its removable value is
$(y_n-a_n)^{1/(1+b_n)}\Lam^{b_n/(1+b_n)}$.
Constant weights give $a_n=b_n=1/n$ and recover \eqref{eq:exact-map};
general weights need not satisfy $a_n=b_n$.
Any reduction to a single-step map by a new coordinate or clock, together
with the necessary mesh and support conditions, requires a separate
argument and is not asserted by the present theorem.
For more general homogeneous means, the same quadratic bottleneck
may survive without an exact $q$-map; that would belong to a broader
universality class rather than to the exact power-mean family proved here.

\section*{Declaration of generative AI and AI-assisted technologies}

During the research and preparation of this manuscript, between 1 August and
28 September 2026, the author used OpenAI ChatGPT and Codex, accessed primarily
through their official web interfaces, with occasional use of the ChatGPT
desktop application.  The principal OpenAI models used for mathematical
research were GPT-5.6 Sol at the maximum reasoning setting and GPT-6 Astra.
These tools were used for exploratory derivations, proof auditing, symbolic
and numerical cross-checks, and editorial revision.  The author also used xAI
Grok 4.6 and Google Gemini 3.8 Flash, primarily through their official web
interfaces, for independent cross-auditing and deep literature and citation
searches.  All AI-assisted material incorporated into the manuscript was
critically reviewed and edited by the author.  The author made the final
decisions about the mathematical arguments, computations, cited sources,
conclusions, and wording, and assumes full responsibility for the content.  No
AI system is listed as an author.

\end{document}